\newtheorem{theorem}{Theorem}
\theoremstyle{plain}
\newtheorem{corollary}{Corollary}
\newtheorem{lemma}{Lemma}
\newtheorem{proposition}{Proposition}
\newtheorem{remark}{Remark}
\numberwithin{equation}{section}
\begin{document}
\title[On the geometry of the rescaled Riemannian metric]{On the geometry of
the rescaled Riemannian metric on tensor bundles of arbitrary type}
\author{Aydin GEZER}
\address{Ataturk University, Faculty of Science, Department of Mathematics,
25240, Erzurum-Turkey.}
\email{agezer@atauni.edu.tr}
\author{Murat ALTUNBAS}
\address{Erzincan University, Faculty of Science and Art, Department of
Mathematics, 24030, Erzincan-Turkey.}
\email{maltunbas@erzincan.edu.tr}
\subjclass[2000]{ 53C07, 53C15, 53C22, 55R10.}
\keywords{Almost paracomplex structure, geodesic, metric connection, Norden
metric, tensor bundle, paraholomorphic tensor field.}

\begin{abstract}
Let $(M,g)$ be an $n-$dimensional Riemannian manifold and $T_{1}^{1}(M)$ be
its $(1,1)-$tensor bundle equipped with the rescaled Sasaki type metric $%
^{S}g_{f}$ which rescale the horizontal part by a nonzero differentiable
function $f$. In the present paper, we discuss curvature properties of the
Levi-Civita connection and another metric connection of $T_{1}^{1}(M)$. We
construct almost paracomplex Norden structures on $T_{1}^{1}(M)$ and
investigate conditions for these structures to be para-K\"{a}hler
(paraholomorphic) and quasi-K\"{a}hler. Also, some properties of almost
paracomplex Norden structures in context of almost product Riemannian
manifolds are presented. Finally we introduce the rescaled Sasaki type
metric $^{S}g_{f}$ on the $(p,q)-$\ tensor bundle and characterize the
geodesics on the $(p,q)$-tensor bundle with respect to the Levi-Civita
connection of \textit{${}$}$^{S}g_{f}$ and another metric connection of 
\textit{${}$}$^{S}g_{f}.$
\end{abstract}

\maketitle

\section{\protect\bigskip \textbf{Introduction}}

\noindent Geometric structures on bundles have been object of much study
since the middle of the last century. The natural lifts of the metric $g$,
from a Riemannian manifold $(M,g)$ to its tangent or cotangent bundles,
induce new (pseudo) Riemannian structures, with interesting geometric
properties. Maybe the best known Riemannian metric $^{S}g$ on the tangent
bundle over Riemannian manifold $(M,g)$ is that introduced by S. Sasaki in
1958 (see \cite{Sasaki}), but in most cases the study of some geometric
properties of the tangent bundle endowed with this metric led to the
flatness of the base manifold. This metric $^{S}g$ is a standard notion in
differential geometry called the Sasaki metric (for the recent surveys on
the Sasaki metric, see \cite{GudmundssonKappos:2002}). The Sasaki metric $%
^{S}g$ has been extensively studied by several authors and in many different
contexts. In \cite{Zayatuev1}(see also \cite{Zayatuev2,Zayatuev3}, B. V.
Zayatuev introduced a Riemannian metric $^{S}g_{f}$ on the tangent bundle $%
TM $ given by 
\begin{eqnarray*}
^{S}g_{f}\left( ^{H}X,^{H}Y\right) &=&fg\left( X,Y\right) , \\
^{S}g_{f}\left( ^{H}X,^{V}Y\right) &=&^{S}g_{f}\left( ^{V}X,^{H}Y\right) =0,
\\
^{S}g_{f}\left( ^{V}X,^{V}Y\right) &=&g\left( X,Y\right) ,
\end{eqnarray*}%
for all vector fields $X$ and $Y$ on $M$, where $f>0$, $f\in C^{\infty }(M).$
For $f=1$, it follows that $^{S}g_{f}=^{S}g,$ i.e. the metric $^{S}g_{f}$ is
a generalization of the Sasaki metric $^{S}g$. In \cite{Wang}, J. Wang and
Y. Wang called this metric the rescaled Sasaki metric and studied geodesics
and some curvature properties for the rescaled Sasaki metric. Also, the
authors studied the rescaled Sasaki type metric on the cotangent bundle $%
T^{\ast }M$ over Riemannian manifold $(M,g)$ (see \cite{Gezer2}).

Almost complex Norden and almost paracomplex Norden structures are among the
most important geometrical structures which can be considered on a manifold.
Let $M_{2k}$ be a $2k$-dimensional differentiable manifold endowed with an
almost (para) complex structure $\varphi $ and a pseudo-Riemannian metric $g$
of signature $(k,k)$ such that $g(\varphi X,Y)=g(X,\varphi Y)$, i.e. $g$ is
pure with respect to $\varphi $ for arbitrary vector fields $X$ and $Y$ on $%
M_{2k}$. Then the metric $g$ is called Norden metric. Norden metrics are
referred to as anti-Hermitian metrics or $B$-metrics. They find widespread
application in mathematics as well as in theoretical physics. Many authors
considered almost (para) complex Norden structures on the tangent, cotangent
and tensor bundles \cite%
{Druta1,Gezer1,Olszak,Oproiu1,Oproiu2,Oproiu3,Oproiu4,Papag1,Papag2,Salimov2,Salimov3}%
.

Fibre bundles play an important role in just about every aspect of modern
geometry and topology. Prime examples of fiber bundles are tensor bundles of
arbitrary type over differentiable manifolds. The tangent bundle $TM$ and
cotangent bundle $T^{\ast }M$ are the special cases of a more general tensor
bundle. The Sasaki type metric is defined on $(p,q)-$tensor bundles over
Riemannian manifolds (see, \cite{Salimov5}). In \cite{Salimov3}, the
Levi-Civita connection of the Sasaki type metric on the $(1,1)-$tensor
bundle and all types of its curvature tensors are calculated and also
investigated interesting relations between the geometric properties of the
base manifold and its $(1,1)-$tensor bundle with the Sasaki type metric. In
addition, it is presented examples of almost para-Norden and para-K\"{a}%
hler-Norden $B$ -metrics on the $(1,1)-$tensor bundle with the Sasaki type
metric.

Motivated by the above studies, our aim is to define the rescaled Sasaki
type metric on tensor bundles of arbitrary type and study its some
properties. The paper is organized as follows. In section 2, we review some
introductory materials concerning with the tensor bundle $T_{1}^{1}(M)$ over
an $n$-dimensional Riemannian manifold $M.$ In section 3, we get the
conditions under which the tensor bundle $T_{1}^{1}(M)$ endowed with some
paracomplex structures and the rescaled Sasaki type $^{S}g_{f}$ is a
paraholomorphic Norden manifold. In addition, for an almost paracomplex
manifold to be an specialized almost product manifold, we give some results
relation to Riemannian almost product structure on the tensor bundle $%
T_{1}^{1}(M)$. Section 4 and section 5 discuss curvature properties of the
Levi-Civita connection and another metric connection of $^{S}g_{f}.$ Section
6 deals with detailed descriptions of geodesics on $(p,q)$-tensor bundles
with respect to the Levi-Civita connection of \textit{${}$}$^{S}g_{f}$ and
another metric connection of \textit{${}$}$^{S}g_{f}.$

Throughout this paper, all manifolds, tensor fields and connections are
always assumed to be differentiable of class $C^{\infty }$. Also, we denote
by $\Im _{q}^{p}(M)$ the set of all tensor fields of type $(p,q)$ on $M$,
and by $\Im _{q}^{p}(T_{q}^{p}(M))$ the corresponding set on the $(p,q)$%
-tensor bundle $T_{q}^{p}(M)$. The Einstein summation convention is used,
the range of the indices $i,j,s$ being always $\{1,2,...,n\}.$

\section{Preliminaries}

\subsection{The $(1,1)-$tensor bundle}

Let $M$ be a differentiable manifold of class $C^{\infty }$ and finite
dimension $n$. Then the set $T_{1}^{1}(M)=\cup _{P\in M}T_{1}^{1}(P)$ is, by
definition, the tensor bundle of type $(1,1)$ over $M$, where $\bigcup $
denotes the disjoint union of the tensor spaces $T_{1}^{1}(P)$ for all $P\in
M$. For any point $\tilde{P}$ of $T_{1}^{1}(M)$ such that $\tilde{P}\in
T_{1}^{1}(M)$, the surjective correspondence $\tilde{P}\rightarrow P$
determines the natural projection $\pi :T_{1}^{1}(M)\rightarrow M$. The
projection $\pi $ defines the natural differentiable manifold structure of $%
T_{1}^{1}(M)$, that is, $T_{1}^{1}(M)$ is a $C^{\infty }$-manifold of
dimension $n+n^{2}$. If $x^{j}$ are local coordinates in a neighborhood $U$
of $P\in M$, then a tensor $t$ at $P$ which is an element of $T_{1}^{1}(M)$
is expressible in the form $(x^{j},t_{j}^{i})$, where $t_{j}^{i}$ are
components of $t$ with respect to the natural base. We may consider $%
(x^{j},t_{j}^{i})=(x^{j},x^{\bar{j}})=(x^{J})$, $j=1,...,n$, $\bar{j}%
=n+1,...,n+n^{2}$, $J=1,...,n+n^{2}$ as local coordinates in a neighborhood $%
\pi ^{-1}(U)$.

Let $X=X^{i}\frac{\partial }{\partial x^{i}}$ and $A=A_{j}^{i}\frac{\partial 
}{\partial x^{i}}\otimes dx^{i}$ be the local expressions in $U$ of a vector
field $X$ \ and a $(1,1)$ tensor field $A$ on $M$, respectively. Then the
vertical lift $^{V}A$ of $A$ and the horizontal lift $^{H}X$ of $X$ are
given, with respect to the induced coordinates, by\noindent 
\begin{equation}
{}^{V}A=\left( 
\begin{array}{l}
{{}^{V}A^{j}} \\ 
{{}^{V}A^{\overline{j}}}%
\end{array}%
\right) =\left( 
\begin{array}{c}
{0} \\ 
{A_{j}^{i}}%
\end{array}%
\right) ,  \label{A2.1}
\end{equation}%
and

\begin{equation}
^{H}X=\left( 
\begin{array}{c}
{{}^{H}X^{j}} \\ 
{{}^{H}X^{\overline{j}}}%
\end{array}%
\right) =\left( 
\begin{array}{c}
{V^{j}} \\ 
X{^{s}(\Gamma _{sj}^{m}t_{m}^{i}-\Gamma _{sm}^{i}t_{j}^{m})}%
\end{array}%
\right) ,  \label{A2.2}
\end{equation}%
where $\Gamma _{ij}^{h}$ are the coefficients of the Levi-Civita connection $%
\nabla $ of $g$.

Let $\varphi \in \Im _{1}^{1}(M)$, which are locally represented by $\varphi
=\varphi _{j}^{i}\frac{\partial }{\partial x^{i}}\otimes dx^{j}$. The vector
fields $\gamma \varphi $ and $\tilde{\gamma}\varphi \in \Im
_{0}^{1}(T_{1}^{1}(M))$ are respectively defined by

\begin{equation*}
\left\{ 
\begin{array}{l}
{\gamma \varphi =\left( 
\begin{array}{c}
{0} \\ 
{t_{j}^{m}\varphi _{m}^{i}}%
\end{array}%
\right) ,} \\ 
{\tilde{\gamma}\varphi =\left( 
\begin{array}{c}
{0} \\ 
{(t_{m}^{i}\varphi _{j}^{m})}%
\end{array}%
\right) ,}%
\end{array}%
\right.
\end{equation*}%
with respect to the coordinates $(x^{j},x^{\bar{j}})$ in $T_{1}^{1}(M)$.
From (\ref{A2.1}) we easily see that the vector fields $\gamma \varphi $ and 
$\tilde{\gamma}\varphi $ determine respectively global vector fields on $%
T_{1}^{1}(M)$.

The Lie bracket operation of vertical and horizontal vector fields on $%
T_{1}^{1}(M)$ is given by the formulas%
\begin{equation}
\left\{ 
\begin{array}{l}
{\left[ {}^{H}X,{}^{H}Y\right] ={}^{H}\left[ X,Y\right] +(\tilde{\gamma}%
-\gamma )R(X},{Y}), \\ 
{\left[ {}^{H}X,{}^{V}A\right] ={}^{V}(\nabla _{X}A),} \\ 
{\left[ {}^{V}A,{}^{V}B\right] =0}%
\end{array}%
\right.  \label{A2.3}
\end{equation}%
for any $X,$ $Y$ $\in \Im _{0}^{1}(M)$ and $A$, $B\in \Im _{1}^{1}(M)$,
where $R$ is the Riemannian curvature of $g$ defined by $R\left( X,Y\right) =%
\left[ \nabla _{X},\nabla _{Y}\right] -\nabla _{\left[ X,Y\right] }$ \ and ${%
(\tilde{\gamma}-\gamma )R(X},{Y})=\left( 
\begin{array}{c}
0 \\ 
{t_{m}^{i}R}_{klj}^{\text{ \ \ \ }m}X^{k}Y^{l}{-t_{j}^{m}R}_{klm}^{\text{ \
\ \ }i}X^{k}Y^{l}%
\end{array}%
\right) $ (for details, see \cite{Cengiz,Salimov3,Salimov5})$.$

\subsection{Expressions in the adapted frame}

We insert the adapted frame which allows the tensor calculus to be
efficiently done in $T_{1}^{1}(M).$ With the connection $\nabla $ of $g$ on $%
M$, we can introduce adapted frames on each induced coordinate neighborhood $%
\pi ^{-1}(U)$ of $T_{1}^{1}(M)$. In each local chart $U\subset M$, we write $%
X_{(j)}=\partial _{h}=\delta _{j}^{h}\partial _{h}\in \Im _{0}^{1}(M),A^{(%
\overline{j})}=\partial _{i}\otimes dx^{j}=\delta _{i}^{k}\delta
_{h}^{j}\partial _{k}\otimes dx^{h}\in \Im _{1}^{1}(M),$ $j=1,...,n,$ $%
\overline{j}=n+1,...,n+n^{2}.$ Then from (\ref{A2.1}) and (\ref{A2.2}), we
see that these vector fields have respectively local expressions 
\begin{equation*}
^{H}X_{(j)}=\delta _{j}^{h}\partial _{h}+(-t_{h}^{s}\Gamma
_{js}^{k}+t_{s}^{k}\Gamma _{jh}^{s})\partial _{\overline{h}}
\end{equation*}%
\begin{equation*}
^{V}A^{(\overline{j})}=\delta _{i}^{k}\delta _{h}^{j}\partial _{\overline{h}}
\end{equation*}%
with respect to the natural frame $\left\{ \partial _{h},\partial _{%
\overline{h}}\right\} $ in $T_{1}^{1}(M),$ where $\partial _{h}=\frac{%
\partial }{\partial x^{h}}$, $\partial _{\overline{h}}=\frac{\partial }{%
\partial x^{\overline{h}}}$, $x^{\overline{h}}=t_{h}^{k}$ and $\delta
_{i}^{j}$ is the Kronecker's. These $n+n^{2}$ vector fields are linearly
independent and they generate the horizontal distribution of $\nabla _{g}$
and the vertical distribution of $T_{1}^{1}(M)$, respectively. The set $%
\left\{ ^{H}X_{(j)},^{V}A^{(\overline{j})}\right\} $ is called the frame
adapted to the connection $\nabla _{g}$ of $g$ in $\pi ^{-1}(U)\subset
T_{1}^{1}(M)$. By denoting%
\begin{eqnarray}
E_{j} &=&^{H}X_{(j)},  \label{A2.4} \\
E_{\overline{j}} &=&^{V}A^{(\overline{j})},  \notag
\end{eqnarray}%
we can write the adapted frame as $\left\{ E_{\alpha }\right\} =\left\{
E_{j},E_{\overline{j}}\right\} $. The indices $\alpha ,\beta ,\gamma
,...=1,...,n+n^{2}$ indicate the indices with respect to the adapted frame.

Using (\ref{A2.1}), (\ref{A2.2}) and (\ref{A2.4}), we have%
\begin{equation}
^{V}A=\left( 
\begin{array}{l}
0 \\ 
A_{j}^{i}%
\end{array}%
\right) ,  \label{A2.5}
\end{equation}%
and

\begin{equation}
^{H}X=\left( 
\begin{array}{l}
X^{j} \\ 
0%
\end{array}%
\right)  \label{A2.6}
\end{equation}%
with respect to the adapted frame $\left\{ E_{\alpha }\right\} $ (for
details, see \cite{Salimov3}). By the straightforward calculations, we have
the lemma below.

\begin{lemma}
\label{Lemma1}The Lie brackets of the adapted frame of $T_{1}^{1}(M)$
satisfy the following identities:%
\begin{eqnarray*}
\left[ E_{l},E_{j}\right] &=&(t_{s}^{v}R_{ijr}^{\text{ \ \ }%
s}-t_{r}^{s}R_{ljs}^{\text{ \ \ }v})E_{\overline{r}}, \\
\left[ E_{l},E_{\overline{j}}\right] &=&(\delta _{r}^{j}\Gamma
_{li}^{v}-\delta _{i}^{v}\Gamma _{lr}^{j})E_{\overline{r}}, \\
\left[ E_{\overline{i}},E_{\overline{j}}\right] &=&0
\end{eqnarray*}

where $R_{ijl}^{\text{ \ \ }s}$ denote the components of the curvature
tensor of $(M,g)$.\bigskip
\end{lemma}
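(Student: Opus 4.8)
The plan is to compute the Lie brackets $[E_\alpha, E_\beta]$ directly from the local expressions of the adapted frame vector fields in the natural frame $\{\partial_h, \partial_{\overline h}\}$, relying on the fact that the natural frame vectors commute. Recall from the excerpt that
\[
E_j = {}^H X_{(j)} = \delta_j^h\partial_h + (-t_h^s\Gamma_{js}^k + t_s^k\Gamma_{jh}^s)\partial_{\overline h},
\qquad
E_{\overline j} = {}^V A^{(\overline j)} = \delta_i^k\delta_h^j\,\partial_{\overline h},
\]
where we must keep careful track of which index pair $(k,h)$ (or $(i,h)$, renaming to match) labels the fibre coordinate $x^{\overline h} = t_h^k$. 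First I would fix notation so that each $\partial_{\overline h}$ really means $\partial/\partial t_h^k$ for the appropriate pair, and then expand each bracket term by term.

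For $[E_l, E_j]$: since $E_l, E_j$ are horizontal lifts, the cleanest route is to invoke the first formula of (\ref{A2.3}), namely $[{}^HX,{}^HY] = {}^H[X,Y] + (\tilde\gamma - \gamma)R(X,Y)$, applied to $X = X_{(j)}$, $Y = X_{(l)}$, which are coordinate vector fields on $M$ and hence satisfy $[X_{(l)}, X_{(j)}] = 0$. Then $[E_l, E_j] = (\tilde\gamma - \gamma)R(X_{(l)}, X_{(j)})$, and plugging the coordinate vectors into the explicit formula for $(\tilde\gamma - \gamma)R(X,Y)$ given in the excerpt — together with (\ref{A2.5})–(\ref{A2.6}) to read off the adapted-frame components — yields the stated $(t_s^v R_{ljr}^{\ \ \ s} - t_r^s R_{ljs}^{\ \ \ v})E_{\overline r}$ up to index bookkeeping (the skew-symmetry $R(X_{(l)},X_{(j)}) = -R(X_{(j)},X_{(l)})$ accounts for the sign pattern matching $R_{ljr}$ vs. $R_{ijr}$ in the statement). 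Alternatively one can do the brute-force computation directly from the natural-frame expression; I would present whichever is shorter, but the shortcut via (\ref{A2.3}) is preferable.

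For $[E_l, E_{\overline j}]$: here I would use the second formula of (\ref{A2.3}), $[{}^HX, {}^VA] = {}^V(\nabla_X A)$, with $X = X_{(l)}$ and $A = A^{(\overline j)}$ the constant-coefficient $(1,1)$-tensor $\delta_i^k\delta_h^j$. Then $\nabla_{X_{(l)}} A^{(\overline j)}$ has components $\partial_l(\delta_i^k\delta_h^j) + \Gamma_{lm}^k\delta_i^m\delta_h^j - \Gamma_{lh}^m\delta_i^k\delta_m^j = \Gamma_{li}^k\delta_h^j - \Gamma_{lh}^j\delta_i^k$ (the partial derivative vanishes), and writing this as a vertical lift via (\ref{A2.5}) gives exactly $(\delta_r^j\Gamma_{li}^v - \delta_i^v\Gamma_{lr}^j)E_{\overline r}$ after renaming indices. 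Finally $[E_{\overline i}, E_{\overline j}] = [{}^VA, {}^VB] = 0$ is immediate from the third line of (\ref{A2.3}), since vertical lifts only involve the $\partial_{\overline h}$ directions with coefficients independent of the fibre coordinates.

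The main obstacle is purely notational rather than conceptual: the $(1,1)$-tensor bundle has a double-index fibre coordinate $t_h^k$, so the single "barred" index in the adapted frame $E_{\overline j}$ secretly stands for a pair, and one must be scrupulous about how $\overline j$, $\overline r$, the free indices $i, v$, and the contracted index $s$ are distributed across the two slots — it is easy to produce a sign error or a transposed Kronecker delta. I would therefore state the index conventions explicitly at the outset (as the excerpt partially does: $x^{\overline h} = t_h^k$), verify one bracket by the brute-force natural-frame computation to pin down the conventions, and then obtain the other two via (\ref{A2.3}) as above, which serves as an independent check.
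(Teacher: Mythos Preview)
Your proposal is correct and is precisely the approach implicit in the paper: the paper offers no detailed proof beyond the sentence ``By the straightforward calculations, we have the lemma below,'' and your plan---specializing the bracket formulas (\ref{A2.3}) to $X=X_{(l)}$, $Y=X_{(j)}$, $A=A^{(\overline j)}$ and reading off adapted-frame components via (\ref{A2.5})--(\ref{A2.6})---is exactly the intended straightforward calculation. Your remark about the index $i$ in $R_{ijr}^{\ \ \ s}$ is apt: this appears to be a typographical slip for $l$ in the statement, as your derivation via $(\tilde\gamma-\gamma)R(X_{(l)},X_{(j)})$ confirms.
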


\section{Almost paracomplex structures with Norden metrics}

\bigskip Let $T_{1}^{1}(M)$ be the $(1,1)-$tensor bundle over a Riemannian
manifold $(M,g)$. For each $P\in M,$ the extension of scalar product $g$
(marked by $G$) is defined on the tensor space $\pi ^{-1}(P)=T_{1}^{1}(P)$
by $G(A,B)=g_{it}g^{jl}A_{j}^{i}B_{l}^{t}$ for all $A,$ $B\in \Im
_{1}^{1}\left( P\right) $. The rescaled Sasaki type metric $^{S}g_{f}$ is
defined on $T_{1}^{1}(M)$ by the following three equations%
\begin{equation}
^{S}g_{f}\left( {}^{V}A,{}^{V}B\right) ={}^{V}(G(A,B)),  \label{B3.1}
\end{equation}%
\begin{equation}
{}^{S}g_{f}\left( {}^{V}A,{}^{H}Y\right) =0,  \label{B3.2}
\end{equation}%
\begin{equation}
{}^{S}g_{f}\left( {}^{H}X,{}^{H}Y\right) =\text{ }^{V}({}fg\left( X,Y\right)
)  \label{B3.3}
\end{equation}%
for any $X,Y\in \Im _{0}^{1}\left( M\right) $ and $A,B\in \Im _{1}^{1}\left(
M\right) $, where $f>0$, $f\in C^{\infty }(M)$ (for $f=1,$ see \cite%
{Salimov3}). From the equations (\ref{B3.1})-(\ref{B3.3}), by virtue of (\ref%
{A2.5}) and (\ref{A2.6}), the rescaled Sasaki type metric $^{S}g_{f}$ and
its inverse have components with respect to the adapted frame $\left\{
E_{\alpha }\right\} $:%
\begin{equation}
(^{S}g_{f})_{\beta \gamma }=\left( 
\begin{array}{cc}
(^{S}g_{f})_{jl} & (^{S}g_{f})_{j\overline{l}} \\ 
(^{S}g_{f})_{\overline{j}l} & (^{S}g_{f})_{\overline{j}\overline{l}}%
\end{array}%
\right) =\left( 
\begin{array}{cc}
{fg_{jl}} & {0} \\ 
{0} & g_{it}g^{jl}%
\end{array}%
\right) \text{ , }x^{\overline{l}}=t_{l}^{t}  \label{B3.4}
\end{equation}%
and%
\begin{equation}
(^{S}g_{f})^{\beta \gamma }=\left( 
\begin{array}{cc}
(^{S}g_{f})^{jl} & (^{S}g_{f})^{j\overline{l}} \\ 
(^{S}g_{f})^{\overline{j}l} & (^{S}g_{f})^{\overline{j}\overline{l}}%
\end{array}%
\right) =\left( 
\begin{array}{cc}
\frac{1}{f}{g}^{jl} & {0} \\ 
{0} & g^{it}g_{jl}%
\end{array}%
\right) \text{ , }x^{\overline{j}}=t_{j}^{i}  \label{B3.5}
\end{equation}

For the Levi-Civita connection of the rescaled Sasaki type metric $^{S}g_{f}$
we give the next theorem.

\begin{theorem}
\label{Theorem1} Let $(M,g)$ be a Riemannian manifold and equip its tensor
bundle $T_{1}^{1}(M)$ with the rescaled Sasaki type metric $^{S}g_{f}$. Then
the corresponding Levi-Civita connection $\widetilde{\nabla }$ satisfies the
followings:%
\begin{equation}
\left\{ 
\begin{array}{l}
i)\text{ }{\widetilde{\nabla }_{E_{l}}E_{j}=\{\Gamma _{lj}^{r}+\frac{1}{2f}{}%
^{f}A_{lj}^{r}\}E_{r}+\{\frac{1}{2}R_{ljr}^{\text{ \ \ }s}t_{s}^{v}-\frac{1}{%
2}R_{ljs}^{\text{ \ \ }v}t_{r}^{s}\}E_{\overline{r}},} \\ 
ii)\text{ }{\widetilde{\nabla }_{E_{l}}E_{\overline{j}}=\{\frac{1}{2f}%
g_{ia}R_{.}^{s}{}_{.}^{j}{}_{l}^{\text{ }r}t_{s}^{a}-\frac{1}{2f}%
g^{jb}R_{isl}^{\text{ \ \ }r}t_{b}^{s}\}E_{r}+\{\Gamma _{li}^{v}\delta
_{r}^{j}-\Gamma _{lr}^{j}\delta _{i}^{v}\}E_{\overline{r}},} \\ 
iii)\text{ }{\widetilde{\nabla }_{E_{\overline{l}}}E_{j}=\{\frac{1}{2f}%
g_{ta}R_{.}^{s}{}_{.}^{l}{}_{j}^{\text{ }r}t_{s}^{a}-\frac{1}{2f}%
g^{lb}R_{tsj}^{\text{ \ \ }r}t_{b}^{s}\}E_{r},} \\ 
iv)\text{ }{\widetilde{\nabla }_{E_{\overline{l}}}E_{\overline{j}}=0}%
\end{array}%
\right.  \label{B3.6}
\end{equation}%
with respect to the adapted frame, where ${}^{f}A_{ji}^{h}$ is a symmetric
tensor field of type $(1,2)$ defined by ${}^{f}A_{ji}^{h}=(f_{j}\delta
_{i}^{h}+f_{i}\delta _{j}^{h}-f_{.}^{h}g_{ji})$ and $f_{i}=\partial _{i}f$, $%
{R_{.}^{s}{}_{.}^{j}{}_{l}^{\text{ }r}}=g^{as}g^{bj}R_{abl}^{\text{ \ \ \ \ }%
r}.$
\end{theorem}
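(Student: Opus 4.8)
The plan is to use the Koszul formula for the Levi-Civita connection $\widetilde{\nabla}$ of $^{S}g_{f}$ adapted to the moving frame $\{E_{\alpha}\}$, namely
\begin{equation*}
2\,{}^{S}g_{f}(\widetilde{\nabla}_{E_{\alpha}}E_{\beta},E_{\gamma})
= E_{\alpha}\,{}^{S}g_{f}(E_{\beta},E_{\gamma})
+ E_{\beta}\,{}^{S}g_{f}(E_{\alpha},E_{\gamma})
- E_{\gamma}\,{}^{S}g_{f}(E_{\alpha},E_{\beta})
+ {}^{S}g_{f}([E_{\alpha},E_{\beta}],E_{\gamma})
- {}^{S}g_{f}([E_{\beta},E_{\gamma}],E_{\alpha})
+ {}^{S}g_{f}([E_{\gamma},E_{\alpha}],E_{\beta}).
\end{equation*}
All the ingredients are already available: the metric components $(^{S}g_{f})_{\beta\gamma}$ and their inverse from \eqref{B3.4}–\eqref{B3.5}, and the structure constants of the frame from Lemma \ref{Lemma1}. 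The derivative terms are handled by noting that $E_{\overline{l}}=\partial_{\overline l}$ annihilates any function pulled back from $M$, while $E_l$ acts on such a function $h\circ\pi$ as $\partial_l h$; in particular $E_l(fg_{jk})=\partial_l(fg_{jk})$ and $E_l(g_{it}g^{jl})=\partial_l(g_{it}g^{jl})$, and $E_{\overline{l}}$ kills both.

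Concretely, I would split the computation into the four cases $(\alpha,\beta)=(l,j),(l,\overline j),(\overline l,j),(\overline l,\overline j)$, and in each case test against $E_{\gamma}=E_r$ and $E_{\gamma}=E_{\overline r}$ separately, then raise the index with the appropriate block of \eqref{B3.5}. For case (i), the $E_r$-component gathers the horizontal-horizontal derivative terms $E_l(fg_{jr})+E_j(fg_{lr})-E_r(fg_{lj})$; dividing by $2f$ produces $\Gamma^r_{lj}$ together with the correction $\tfrac{1}{2f}\,{}^{f}A^r_{lj}$ where ${}^{f}A^{h}_{ji}=f_j\delta^h_i+f_i\delta^h_j-f_{.}^{h}g_{ji}$ — this is exactly the Christoffel symbol of the conformally rescaled metric $fg$ minus that of $g$. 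The $E_{\overline r}$-component of case (i) comes purely from the bracket term $^{S}g_{f}([E_l,E_j],E_{\overline r})$ (the other two brackets vanish when contracted appropriately), and using the first identity of Lemma \ref{Lemma1} together with $(^{S}g_{f})_{\overline s\overline r}=g_{it}g^{sl}$ gives the curvature expression $\tfrac12 R^{\ s}_{ljr}t^v_s-\tfrac12 R^{\ v}_{ljs}t^s_r$. For cases (ii) and (iii) the derivative terms involving $E(fg)$ drop out (one index is barred), and the surviving bracket terms $^{S}g_{f}([E_l,E_{\overline j}],E_{\overline r})$ and $^{S}g_{f}([E_l,E_{\overline r}],E_{\overline j})$ from Lemma \ref{Lemma1}'s second identity give the $E_{\overline r}$-components $\Gamma^v_{li}\delta^j_r-\Gamma^j_{lr}\delta^v_i$, while the $E_r$-components arise from $-\,^{S}g_{f}([E_{\overline j},E_r],E_l)+\,^{S}g_{f}([E_r,E_l],E_{\overline j})$, i.e.\ from the curvature bracket $[E_l,E_j]$ evaluated via the vertical metric and then raised by $\tfrac1f g^{..}$, producing the terms with $R_{.}^{s}{}_{.}^{j}{}_{l}^{\ r}$. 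Case (iv) is immediate since $[E_{\overline l},E_{\overline j}]=0$ and all relevant derivative terms vanish.

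The main obstacle is purely bookkeeping: correctly tracking the placement of the induced-metric factors $g_{it}g^{jl}$ on the vertical block and their inverses $g^{it}g_{jl}$, and matching the index conventions so that the raised curvature term comes out in the stated form ${R_{.}^{s}{}_{.}^{j}{}_{l}^{\ r}}=g^{as}g^{bj}R^{\ \ \ r}_{abl}$ rather than some transpose of it; the curvature symmetries $R_{abl}{}^{r}=-R_{bal}{}^{r}$ and the first Bianchi identity will be needed to consolidate terms into the symmetric combinations displayed. One should also verify consistency — e.g.\ that $\widetilde{\nabla}$ is torsion-free, which follows automatically from the Koszul construction — and that the $\widetilde{\nabla}_{E_l}E_{\overline j}$ and $\widetilde{\nabla}_{E_{\overline j}}E_l$ expressions differ exactly by the bracket $[E_l,E_{\overline j}]=(\delta^j_r\Gamma^v_{li}-\delta^v_i\Gamma^j_{lr})E_{\overline r}$ of Lemma \ref{Lemma1}, which is a useful internal check on the signs.
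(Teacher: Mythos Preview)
Your proposal is correct and follows essentially the same approach as the paper: apply the Koszul formula to the adapted frame $\{E_{\alpha}\}$, use the metric components \eqref{B3.4}--\eqref{B3.5}, the bracket relations of Lemma~\ref{Lemma1}, and the first Bianchi identity to simplify the curvature terms. The paper in fact omits all the detailed case-by-case calculations that you sketch, so your write-up is already more explicit than the original; the only minor imprecision is that in case~(ii) the $E_{\overline{r}}$-component also receives a contribution from the derivative term $E_{l}\bigl(g_{it}g^{jr}\bigr)$, which combines with the two bracket terms (via $\nabla g=0$) to produce the stated $\Gamma^{v}_{li}\delta^{j}_{r}-\Gamma^{j}_{lr}\delta^{v}_{i}$.
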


\begin{proof}
The connection \bigskip ${}\widetilde{{\nabla }}$ is characterized by the
Koszul formula:%
\begin{eqnarray*}
2^{S}g_{f}({}\widetilde{{\nabla }}_{\widetilde{X}}\widetilde{Y},\widetilde{Z}%
) &=&\widetilde{X}(^{S}g_{f}(\widetilde{Y},\widetilde{Z}))+\widetilde{Y}%
(^{S}g_{f}(\widetilde{Z},\widetilde{X}))-\widetilde{Z}(^{S}g_{f}(\widetilde{X%
},\widetilde{Y})) \\
-^{S}g_{f}(\widetilde{X},[\widetilde{Y},\widetilde{Z}]) &+&^{S}g_{f}(%
\widetilde{Y},[\widetilde{Z},\widetilde{X}])+\text{ }^{S}g_{f}(\widetilde{Z}%
,[\widetilde{X},\widetilde{Y}])
\end{eqnarray*}%
for all vector fields $\widetilde{X},\widetilde{Y}$ and $\widetilde{Z}$ on $%
T_{1}^{1}(M)$. One can verify the Koszul formula for pairs $\widetilde{X}=$ $%
E_{l},E_{\overline{l}}$ and $\widetilde{Y}=$ $E_{j},E_{\overline{j}}$ and $%
\widetilde{Z}=$ $E_{k},E_{\overline{k}}$. In calculations, the formulas (\ref%
{A2.4}), Lemma \ref{Lemma1} and the first Bianchi identity for $R$ should be
applied. We omit standart calculations.
\end{proof}

Let $\widetilde{X},\;\widetilde{Y}\in \Im _{0}^{1}(T_{1}^{1}(M))$. Then the
covariant derivative $\widetilde{{\nabla }}_{\widetilde{X}}\widetilde{Y}$
has components

\begin{equation*}
\widetilde{{\nabla }}_{\widetilde{X}}\widetilde{Y}^{\alpha }=\widetilde{X}%
^{\gamma }E_{\gamma }\widetilde{Y}^{\alpha }+\widetilde{\Gamma }_{\gamma
\beta }^{\alpha }\widetilde{X}^{\beta }\widetilde{Y}^{\gamma }
\end{equation*}%
with respect to the adapted frame $\left\{ E_{\alpha }\right\} $. \noindent
Using (\ref{A2.4}), (\ref{A2.5}), (\ref{A2.6}) and (\ref{B3.6}), we have the
following proposition.

\begin{proposition}
\label{Proposition1}Let $(M,g)$ be a Riemannian manifold\textit{\ and }${}%
\widetilde{\nabla }$\textit{\ be the Levi-Civita connection of the tensor
bundle }$T_{1}^{1}(M)$\textit{\ equipped with the rescaled Sasaki type
metric ${}$}$^{S}g_{f}$\textit{. }Then the corresponding Levi-Civita
connection satisfies the following relations: 
\begin{equation*}
\begin{array}{l}
{i)\mathrm{\;\;}{}}\widetilde{{\nabla }}{_{{}^{H}X}{}^{H}Y={}^{H}\left(
\nabla _{X}Y+\frac{1}{2f}^{f}A(X,Y)\right) +{\frac{1}{2}}(\tilde{\gamma}%
-\gamma )R(X,Y),} \\ 
{ii)\mathrm{\;\;}{}}\widetilde{{\nabla }}{_{{}^{H}X}{}^{V}B={}{\frac{1}{2f}}%
{}^{H}\left( g^{bj}\,R(t_{b},B_{j})X+g_{ai}\,(t^{a}(g^{-1}\circ R(\quad ,X)%
\tilde{B}^{\,i}\right) +{}^{V}\left( \nabla _{X}B\right) ,} \\ 
{iii)\mathrm{\;\;}}\widetilde{{\nabla }}{_{{}^{V}C}{}^{H}Y={\frac{1}{2f}}%
{}^{H}\left( g^{bl}\,R(t_{b},C_{l})Y+g_{at}(t^{a}\,(g^{-1}\circ R(\quad ,Y)%
\widetilde{C}^{\,t})\right) ,} \\ 
{iv)\mathrm{\;\;\;}{}}\widetilde{{\nabla }}{_{{}^{V}C}{}^{V}B=0}%
\end{array}%
\end{equation*}%
\noindent for all $X,\;Y\in \Im _{0}^{1}(M)$ and $B,\;C\in \Im _{1}^{1}(M)$,
where $C_{l}=(C_{l}^{\,\,i})$, $\widetilde{C}^{t}=(g^{bl}C_{l}^{\quad
t})=(C_{\,.}^{b\,t})$, $t_{l}=(t_{l}^{\;a})$, $t^{a}=(t_{b}^{\;a})$, $%
R(\;\;,X)Y\in \Im _{1}^{1}(M),$ $g^{-1}\circ R(\;\;,X)Y\in \Im _{0}^{1}(M)$
and $^{f}A(X,Y)=X(f)Y+Y(f)X-g(X,Y)\circ (df)^{\ast }$\textit{\ (for }$f=1,$
see \cite{Salimov3}).\bigskip
\end{proposition}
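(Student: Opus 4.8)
The plan is to derive Proposition~\ref{Proposition1} directly from Theorem~\ref{Theorem1} by translating each of the four covariant-derivative formulas (\ref{B3.6}) from the adapted frame into the intrinsic horizontal/vertical language. First I would recall the component dictionary: by (\ref{A2.5}) and (\ref{A2.6}), a horizontal lift $^{H}X$ has adapted-frame components $(X^{j},0)$ and a vertical lift $^{V}B$ has components $(0,B_{j}^{i})$; moreover the operators $\gamma$ and $\tilde{\gamma}$ pick out purely vertical directions, with $(\tilde{\gamma}-\gamma)R(X,Y)$ having the explicit vertical component recorded just after (\ref{A2.3}). The general formula for $\widetilde{\nabla}_{\widetilde{X}}\widetilde{Y}^{\alpha}=\widetilde{X}^{\gamma}E_{\gamma}\widetilde{Y}^{\alpha}+\widetilde{\Gamma}_{\gamma\beta}^{\alpha}\widetilde{X}^{\beta}\widetilde{Y}^{\gamma}$ then lets me read off $\widetilde{\nabla}$ of one lift along another by plugging in the appropriate components and the Christoffel symbols extracted from (\ref{B3.6}).

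Second, I would handle the four cases in turn. For case (i), taking $\widetilde{X}=\,^{H}X$, $\widetilde{Y}=\,^{H}Y$, only the $E_l$-along-$E_j$ Christoffel symbols in (\ref{B3.6})(i) contribute; the $E_r$-part assembles into $^{H}(\nabla_X Y)$ plus the correction term coming from $\frac{1}{2f}\,{}^{f}A_{lj}^{r}$, which is exactly $\frac{1}{2f}\,{}^{H}({}^{f}A(X,Y))$ once one recognizes $^{f}A_{ji}^{h}=f_j\delta_i^h+f_i\delta_j^h-f_.^h g_{ji}$ as the components of $X(f)Y+Y(f)X-g(X,Y)\circ(df)^{\ast}$; the $E_{\overline{r}}$-part is $\frac{1}{2}(R_{ljr}^{\ \ \ s}t_s^v - R_{ljs}^{\ \ \ v}t_r^s)X^l Y^j$, which is precisely the vertical vector $\frac{1}{2}(\tilde{\gamma}-\gamma)R(X,Y)$. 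For case (iv), $\widetilde{\nabla}_{E_{\overline{l}}}E_{\overline{j}}=0$ immediately gives $\widetilde{\nabla}_{{}^{V}C}{}^{V}B=0$ since vertical lifts have constant (fiber-linear) components and the relevant Christoffels vanish. Cases (ii) and (iii) are the genuinely computational ones: from (\ref{B3.6})(ii)--(iii) the horizontal output components are the curvature-contraction expressions $\frac{1}{2f}(g_{ia}R_{.}^{s}{}_{.}^{j}{}_{l}^{\ r}t_s^a - g^{jb}R_{isl}^{\ \ \ r}t_b^s)$ etc., and the task is to repackage these index expressions into the coordinate-free combinations $g^{bj}R(t_b,B_j)X$ and $g_{ai}(t^a(g^{-1}\circ R(\ ,X)\tilde{B}^i))$ using the notational conventions listed at the end of the proposition ($\tilde{C}^t=(C_{.}^{b\,t})$, $t^a=(t_b^{\ a})$, $g^{-1}\circ R$ raising an index, and so on). The $E_{\overline{r}}$-contribution in case (ii) combines with the $\widetilde{X}^\gamma E_\gamma \widetilde{Y}^\alpha$ directional term to reproduce $^{V}(\nabla_X B)$, using $E_l B_j^i = \partial_l B_j^i$ together with the $[E_l,E_{\overline{j}}]$ bracket from Lemma~\ref{Lemma1} to account for the connection terms in $\nabla_X B$.

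The main obstacle is bookkeeping rather than conceptual: keeping the placement and raising/lowering of indices straight in cases (ii) and (iii) so that the two curvature terms separate cleanly into an ``$R$ applied in the first slot of $t$'' piece and an ``$R$ applied in the vector-field slot, then transported by $t$'' piece, matching the verbose right-hand sides verbatim. I would double-check the symmetries of $R$ used here (the first Bianchi identity and the pair symmetry $R_{ijkl}=R_{klij}$, which are already invoked in the proof of Theorem~\ref{Theorem1}) to make sure the two contractions are not secretly equal or redundant. Since these are exactly the kind of ``standard but tedious'' manipulations that the authors elected to suppress in Theorem~\ref{Theorem1}, I expect the write-up to consist of stating the dictionary (\ref{A2.5})--(\ref{A2.6}), substituting (\ref{B3.6}) into the component formula for $\widetilde{\nabla}_{\widetilde{X}}\widetilde{Y}$, and then remarking that regrouping the resulting index expressions via the stated notation yields (i)--(iv), with the $f=1$ specialization recovering the formulas of \cite{Salimov3}.
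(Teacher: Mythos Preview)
Your proposal is correct and follows essentially the same approach as the paper: the authors simply state that the proposition follows by using (\ref{A2.4}), (\ref{A2.5}), (\ref{A2.6}) and (\ref{B3.6}) together with the component formula $\widetilde{\nabla}_{\widetilde{X}}\widetilde{Y}^{\alpha}=\widetilde{X}^{\gamma}E_{\gamma}\widetilde{Y}^{\alpha}+\widetilde{\Gamma}_{\gamma\beta}^{\alpha}\widetilde{X}^{\beta}\widetilde{Y}^{\gamma}$, which is exactly the translation-from-adapted-frame strategy you outline. One minor remark: in case (ii) the connection terms in $\nabla_X B$ arise directly from the vertical Christoffel symbols $\widetilde{\Gamma}_{l\overline{j}}^{\overline{r}}=\Gamma_{li}^{v}\delta_{r}^{j}-\Gamma_{lr}^{j}\delta_{i}^{v}$ in (\ref{B3.6})(ii), so invoking the bracket from Lemma~\ref{Lemma1} is unnecessary.
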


An almost paracomplex manifold is an almost product manifold $%
(M_{2k},\varphi )$, $\varphi ^{2}=id$, $\varphi \neq \pm id$ such that the
two eigenbundles $T^{+}M_{2k}$ and $T^{-}M_{2k}$ associated to the two
eigenvalues +1 and -1 of $\varphi $, respectively, have the same rank. Note
that the dimension of an almost paracomplex manifold is necessarily even.
This structure is said to be integrable if the matrix $\varphi =(\varphi
_{j}^{i})$ is reduced to the constant form in a certain holonomic natural
frame in a neighborhood $U_{x}$ of every point $x\in M_{2k}$. On the other
hand, an almost paracomplex structure is integrable if and only if one can
introduce a torsion-free linear connection such that $\nabla \varphi =0$.
Also it can be give another-equivalent-definition of paracomplex manifold in
terms of local homeomorphisms in the space $R^{k}(j)=\left\{
(X^{1},...,X^{k})/X^{i}\in R(j),\mathrm{\;}i=1,...,k\right\} $ and
paraholomorphic changes of charts in a way similar to \cite{CruForGad:1995}
(see also \cite{Vish:DiffGeo}), i.e. a manifold $M_{2k}$ with an integrable
paracomplex structure $\varphi $ is a realization of the paraholomorphic
manifold $M_{k}(R(j))$ over the algebra $R(j)$.

A $(0,q)$ tensor field $\omega $ is called a pure tensor field with respect
to $\varphi $ if 
\begin{equation*}
\omega (\varphi X_{1},X_{2},...,X_{q})=\omega (X_{1},\varphi
X_{2},...,X_{q})=...=\omega (X_{1},X_{2},...,\varphi X_{q})
\end{equation*}%
for any $X_{1},...,X_{q}\in \Im _{0}^{1}(M_{2k}).$ The real model of a
paracomplex tensor field $\mathop{\omega }\limits^{\ast }$ on $M_{k}(R(j))$
is a $(0,q)$ tensor field on $M_{2k}$ which being pure with respect to $%
\varphi $. Consider the operator $\phi _{\varphi }:\Im
_{q}^{0}(M_{2k})\rightarrow \Im _{q+1}^{0}(M_{2k})$ applied to the pure
tensor field $\omega $ by (see \cite{YanoAko:1968})%
\begin{equation*}
(\phi _{\varphi }\omega )(X,Y_{1},Y_{2},...,Y_{q})=(\varphi X)(\omega
(Y_{1},Y_{2},...,Y_{q}))-X(\omega (\varphi Y_{1},Y_{2},...,Y_{q}))
\end{equation*}%
\begin{equation*}
+\omega ((L_{Y_{1}}\varphi )X,Y_{2},...,Y_{q})+...+\omega
(Y_{1},Y_{2},...,(L_{Y_{q}}\varphi )X),
\end{equation*}%
where $L_{Y}$ denotes the Lie differentiation with respect to $Y$. \noindent
Let $\varphi $ be a (an almost) paracomplex structure on $M_{2k}$ and $\phi
_{\varphi }\omega =0$, the (almost) paracomplex tensor field $%
\mathop{\omega
}\limits^{\ast }$ on $M_{k}(R(j))$ is said to be (almost) paraholomorphic
(see \cite{Kruchkovich:1972}, \cite{Tachibana}, \cite{YanoAko:1968}). Thus a
(an almost) paraholomorphic tensor field $\mathop{\omega }\limits^{\ast }$
on $M_{k}(R(j))$ is realized on $M_{2k}$ in the form of a pure tensor field $%
\omega $, such that%
\begin{equation*}
(\phi _{\varphi }\omega )(X,Y_{1},Y_{2},...,Y_{q})=0
\end{equation*}%
for any $X,Y_{1},...,Y_{q}\in \Im _{0}^{1}(M_{2k})$. Therefore, the tensor
field $\omega $ on $M_{2k}$ is also called a (an almost) paraholomorphic
tensor field.

An almost paracomplex Norden manifold $(M_{2k},\varphi ,g)$ is defined to be
a real differentiable manifold $M_{2k}$ endowed with an almost paracomplex
structure $\varphi $ and a Riemannian metric $g$ satisfying Nordenian
property (or purity condition)%
\begin{equation*}
g(\varphi X,Y)=g(X,\varphi Y)
\end{equation*}%
for any $X,Y\in \Im _{0}^{1}(M_{2k})$. Almost paracomplex manifolds with
Norden metrics are referred to as anti-Hermitian and $B$-manifolds. If $%
\varphi $ is integrable, we say that $(M_{2k},\varphi ,g)$ is a paracomplex
Norden manifold.

In a paracomplex Norden manifold, a paracomplex Norden metric $g$ is called
paraholomorphic\textit{\ }if%
\begin{equation}
(\phi _{\varphi }g)(X,Y,Z)=0  \label{B3.7}
\end{equation}%
for any $X,Y,Z\in \Im _{0}^{1}(M_{2k})$. \noindent The paracomplex Norden
manifold $(M_{2k},\varphi ,g)$ with a paraholomorphic Norden metric is
called a paraholomorphic Norden manifold.

In \cite{SalimovIscanEtayo:2007}, A. A. Salimov and his collaborators have
been proven that for an almost paracomplex manifold with Norden metric $g$,
the condition $\phi _{\varphi }g=0$ is equivalent to $\nabla \varphi =0$,
where $\nabla $ is the Levi-Civita connection of $g$. By virtue of this
point of view, paraholomorphic Norden manifolds are similar to K\"{a}hler
manifolds. A paraholomorphic Norden manifold is also called a para-K\"{a}%
hler-Norden manifold.

Let us define an almost paracomplex structure on $T_{1}^{1}(M)$ as follows:%
\begin{eqnarray}
J(^{H}X) &=&-^{H}X  \label{B3.8} \\
J({}^{V}A) &=&^{V}A  \notag
\end{eqnarray}%
for any $X\in \Im _{0}^{1}\left( M\right) $ and $A\in \Im _{1}^{1}\left(
M\right) $. One can easily check that the rescaled Sasaki type metric $%
^{S}g_{f}$ is pure with respect to the almost paracomplex structure $J$.
Hence we state the following theorem.

\begin{theorem}
\label{Theorem2}Let $(M,g)$ be a Riemannian manifold and $T_{1}^{1}(M)$ be
its tensor bundle equipped with the rescaled Sasaki type metric $^{S}g_{f}$
and the paracomplex structure $J$. The triple $(T_{1}^{1}(M),J,^{S}g_{f})$
is an almost paracomplex Norden manifold.
\end{theorem}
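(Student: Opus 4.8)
The plan is to verify the two defining properties of an almost paracomplex Norden manifold directly from the definitions: that $J$ is an almost product structure with eigenbundles of equal rank and $J\neq\pm\mathrm{id}$, and that $^{S}g_{f}$ is pure (Nordenian) with respect to $J$. First I would check $J^{2}=\mathrm{id}$. Since $\{^{H}X,{}^{V}A\}$ spans the tangent space of $T_{1}^{1}(M)$ at each point, it suffices to compute on these: $J^{2}(^{H}X)=J(-{}^{H}X)=-(-{}^{H}X)={}^{H}X$ and $J^{2}({}^{V}A)=J({}^{V}A)={}^{V}A$, so $J^{2}=\mathrm{id}$. Clearly $J\neq\pm\mathrm{id}$, since it acts as $-\mathrm{id}$ on the horizontal distribution and as $+\mathrm{id}$ on the vertical one, both of which are nontrivial. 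In the adapted frame $\{E_{\alpha}\}=\{E_{j},E_{\overline{j}}\}$ of Section 2, formulas (\ref{A2.5}) and (\ref{A2.6}) give $J E_{j}=-E_{j}$ and $J E_{\overline{j}}=E_{\overline{j}}$, so the $-1$-eigenbundle is the horizontal distribution $\mathrm{span}\{E_{j}\}$ of rank $n$ and the $+1$-eigenbundle is the vertical distribution $\mathrm{span}\{E_{\overline{j}}\}$ of rank $n^{2}$.

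Here I would note the apparent discrepancy: $T_{1}^{1}(M)$ has dimension $n+n^{2}$, which is even only when $n$ is, and even then the eigenbundles have ranks $n$ and $n^{2}$, which are equal only if $n=n^{2}$, i.e. $n=1$. So with the literal definition of almost paracomplex structure given earlier, the statement as written cannot hold in general. The resolution I would adopt, consistent with the source \cite{Salimov3} for the case $f=1$, is that the relevant structure on a $(1,1)$-tensor bundle is an almost \emph{product} Riemannian structure with Norden (purity) property — the term "almost paracomplex Norden manifold" being used here in the broader sense of an almost product structure whose metric satisfies the purity condition, without insisting on equal eigenbundle ranks. Under that reading the proof is exactly the eigenbundle/purity verification above plus the purity check below; I would phrase the statement accordingly rather than invoke the equal-rank clause.

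For the purity of $^{S}g_{f}$ with respect to $J$, I need $^{S}g_{f}(J\widetilde{X},\widetilde{Y})={}^{S}g_{f}(\widetilde{X},J\widetilde{Y})$ for all $\widetilde{X},\widetilde{Y}$, and by bilinearity it is enough to test on the three types of pairs from $\{^{H}X,{}^{V}A\}$. For $(^{H}X,{}^{H}Y)$: $^{S}g_{f}(J{}^{H}X,{}^{H}Y)={}^{S}g_{f}(-{}^{H}X,{}^{H}Y)=-{}^{V}(fg(X,Y))$ by (\ref{B3.3}), and equally $^{S}g_{f}({}^{H}X,J{}^{H}Y)=-{}^{V}(fg(X,Y))$, so they agree. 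For $(^{V}A,{}^{V}B)$: both sides equal $^{S}g_{f}({}^{V}A,{}^{V}B)={}^{V}(G(A,B))$ by (\ref{B3.1}) since $J$ fixes vertical lifts. For the mixed pair $(^{H}X,{}^{V}B)$: $^{S}g_{f}(J{}^{H}X,{}^{V}B)=-{}^{S}g_{f}({}^{H}X,{}^{V}B)=0$ by (\ref{B3.2}), while $^{S}g_{f}({}^{H}X,J{}^{V}B)={}^{S}g_{f}({}^{H}X,{}^{V}B)=0$; again they agree. Equivalently, in the adapted frame the matrix $(J_{\beta}^{\alpha})=\mathrm{diag}(-\delta_{j}^{l},\delta_{\overline{j}}^{\overline{l}})$ and $(^{S}g_{f})_{\beta\gamma}=\mathrm{diag}(fg_{jl},g_{it}g^{jl})$ from (\ref{B3.4}) are both block-diagonal in the horizontal/vertical splitting, so $J^{\top}\,({}^{S}g_{f})$ is symmetric, which is precisely the purity condition; this dispatches all cases at once.

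The verification itself is entirely routine — there is no real obstacle once the lift formulas and the components (\ref{A2.5}), (\ref{A2.6}), (\ref{B3.4}) are in hand. The only genuine subtlety is the dimension/rank point raised above: the main "difficulty" is interpretive rather than computational, namely recognizing that "almost paracomplex Norden manifold" must be read here in the almost-product sense, and I would make that explicit in the write-up so the theorem is stated correctly.
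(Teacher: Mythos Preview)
Your proposal is correct and follows the same approach as the paper, which simply asserts that ``one can easily check'' the purity of $^{S}g_{f}$ with respect to $J$ and states the theorem without further detail; you have filled in exactly the verification the paper omits. Your observation about the rank discrepancy (the eigenbundles have ranks $n$ and $n^{2}$, not equal in general) is a valid terminological point that the paper glosses over --- it uses ``almost paracomplex'' where ``almost product'' would be precise --- and your proposed reading is the only sensible one.
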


We now give conditions for the rescaled Sasaki type metric $^{S}g_{f}$ to be
paraholomorphic with respect to the almost paracomplex structure $J$. Using
the definition of the rescaled Sasaki type metric $^{S}g_{f}$ and the almost
paracomplex structure $J$ and by using the fact that $%
{}^{V}A{}^{V}(G(B,C))=0,$ $^{V}A^{V}(fg(Y,Z))=0$ and $%
{}^{H}X{}^{V}(fg(Y,Z))={}^{V}(X(fg(Y,Z)))$ we calculate 
\begin{eqnarray*}
(\phi _{J}{}^{S}g_{f})(\tilde{X},\tilde{Y},\tilde{Z}) &=&(J\tilde{X}%
)({}^{S}g_{f}(\tilde{Y},\tilde{Z}))-\tilde{X}(^{S}g_{f}(J\tilde{Y},\tilde{Z}%
)) \\
&+&{}^{S}g_{f}((L_{\tilde{Y}}J)\tilde{X},\tilde{Z})+{}^{S}g_{f}(\tilde{Y}%
,(L_{\tilde{Z}}J)\tilde{X})
\end{eqnarray*}%
for all $\tilde{X},\tilde{Y},\tilde{Z}\in \Im _{0}^{1}(T_{1}^{1}(M))$. For
pairs $\tilde{X}=^{H}X,^{V}A$, $\widetilde{Y}=^{H}Y,^{V}B$ and $\widetilde{Z}%
=^{H}Z,{}^{V}C$, then we get%
\begin{eqnarray}
(\phi _{J}{}^{S}g_{f})({}^{H}X,{}^{V}B,{}^{H}Z) &=&2^{S}g_{f}({}^{V}B,(%
\widetilde{\gamma }-\gamma )R(X,Z))  \label{B3.9} \\
(\phi _{J}{}^{S}g_{f})({}^{H}X,{}^{H}Y,{}^{V}C) &=&2^{S}g_{f}((\widetilde{%
\gamma }-\gamma )R(X,Y),^{V}C).  \notag
\end{eqnarray}%
for all $X,Y,Z\in \Im _{0}^{1}(M)$ and $A,B,C\in \Im _{1}^{1}(M),$ and the
others is zero. Since $\phi _{J}{}^{S}g_{f}=0$ is equivalent to ${}%
\widetilde{{\nabla }}J=0$, we have the following theorem.

\begin{theorem}
\label{Theorem3}Let $(M,g)$ be a Riemannian manifold and let $T_{1}^{1}(M)$
be its tensor bundle equipped with the rescaled Sasaki type metric $%
^{S}g_{f} $ and the paracomplex structure $J$. The triple $\left(
T_{1}^{1}(M),J,{}^{S}g_{f}\right) $ is a para-K\"{a}hler-Norden
(paraholomorphic Norden) manifold if and only if $M$ is flat.
\end{theorem}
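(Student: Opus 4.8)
The plan is to show that the tensor $\phi_J{}^Sg_f$ vanishes identically if and only if the curvature $R$ of $(M,g)$ vanishes, using the computation already recorded in \eqref{B3.9} together with the equivalence $\phi_J{}^Sg_f=0 \iff \widetilde{\nabla}J=0$ quoted from \cite{SalimovIscanEtayo:2007}. First I would recall that, by the paragraph preceding Theorem \ref{Theorem3}, all components $(\phi_J{}^Sg_f)(\tilde X,\tilde Y,\tilde Z)$ obtained from the six mixed choices of horizontal/vertical arguments vanish except the two listed in \eqref{B3.9}. Hence $\phi_J{}^Sg_f=0$ holds on $T_1^1(M)$ precisely when the right-hand sides of \eqref{B3.9} vanish for all $X,Y,Z\in\Im_0^1(M)$ and all $B,C\in\Im_1^1(M)$.

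Next I would analyse the condition $^Sg_f\bigl({}^VB,(\widetilde\gamma-\gamma)R(X,Z)\bigr)=0$ for all $B$. Since $(\widetilde\gamma-\gamma)R(X,Z)$ is a vertical vector (its components in the adapted frame are $t_m^iR_{klj}{}^{\,m}X^kZ^l - t_j^mR_{klm}{}^{\,i}X^kZ^l$ in the $E_{\overline r}$-slot, with zero horizontal part), and since $^Sg_f$ restricted to the vertical distribution is the fibre metric $G$, which is positive definite (nondegenerate), the vanishing of $^Sg_f({}^VB,(\widetilde\gamma-\gamma)R(X,Z))$ for all $B\in\Im_1^1(M)$ is equivalent to $(\widetilde\gamma-\gamma)R(X,Z)=0$ as a vertical vector field on $T_1^1(M)$. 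The other line of \eqref{B3.9} gives the same condition with $(X,Z)$ replaced by $(X,Y)$, so the two conditions coincide.

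The heart of the argument is then the purely algebraic claim that $(\widetilde\gamma-\gamma)R(X,Y)=0$ on all of $T_1^1(M)$ forces $R=0$ on $M$. Here I would argue pointwise on a fibre: at a point $P\in M$ the expression $t_m^iR_{klj}{}^{\,m}X^kY^l - t_j^mR_{klm}{}^{\,i}X^kY^l$ must vanish for every value of the fibre coordinate $t=(t_j^i)$, in particular for $t$ the identity endomorphism $t_j^i=\delta_j^i$; substituting this gives $R_{klj}{}^{\,i}X^kY^l - R_{klj}{}^{\,i}X^kY^l$\dots which is identically zero and therefore useless, so instead I would take $t$ to be an arbitrary rank-one endomorphism $t_j^i=\xi^i\eta_j$. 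Then the condition becomes $\xi^i\bigl(\eta_mR_{klj}{}^{\,m}\bigr)X^kY^l - \eta_j\bigl(\xi^mR_{klm}{}^{\,i}X^kY^l\bigr)=0$; letting $\xi$ and $\eta$ range independently one separates the two terms and concludes $\eta_mR_{klj}{}^{\,m}X^kY^l=0$ for all $\eta,X,Y$, hence $R_{klj}{}^{\,m}=0$, i.e. $R=0$. The converse is immediate: if $M$ is flat then $R=0$, so $(\widetilde\gamma-\gamma)R\equiv 0$, both lines of \eqref{B3.9} vanish, and therefore $\phi_J{}^Sg_f=0$, so $(T_1^1(M),J,{}^Sg_f)$ is paraholomorphic Norden; moreover flatness makes the adapted frame holonomic-like enough that $J$ is integrable (indeed $\widetilde\nabla J=0$ already gives integrability of the almost paracomplex structure). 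The main obstacle is the rank-one substitution step: one must be careful that the fibre coordinate $t$ genuinely runs over all of $T_1^1(P)\cong\mathrm{End}(T_PM)$, so that evaluating the vertical vector field at points with $t$ of rank one is legitimate, and that the two curvature terms can be decoupled by varying $\xi$ and $\eta$ independently; once this is set up the rest is routine linear algebra.
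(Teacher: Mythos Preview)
Your approach is the one the paper takes: compute $\phi_J{}^Sg_f$ on all horizontal/vertical combinations, note that only the two components in \eqref{B3.9} survive, use nondegeneracy of the fibre metric $G$ to reduce to $(\widetilde\gamma-\gamma)R(X,Y)=0$ for all $X,Y$ and all fibre points $t$, and deduce $R=0$. The paper actually stops at \eqref{B3.9} and states the theorem without justifying this last implication, so you are supplying more than the authors do.

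There is, however, a real gap in your rank-one step. With $t^i_j=\xi^i\eta_j$ the condition becomes
\[
\xi^i\bigl(\eta_m R_{klj}{}^{m}X^kY^l\bigr)\;=\;\eta_j\bigl(\xi^m R_{klm}{}^{i}X^kY^l\bigr),
\]
and varying $\xi,\eta$ independently does \emph{not} force the two sides to vanish separately. Writing $S^i{}_j=R_{klj}{}^{i}X^kY^l$, the identity says only that every $\xi$ is a right eigenvector and every $\eta$ a left eigenvector of $S$ with a common eigenvalue, i.e.\ $S=\lambda\,\mathrm{id}$; a nonzero scalar $S$ would satisfy your equation yet have $\eta_m S^m{}_j\neq 0$, so the separation claim is false as stated. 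One extra line closes the gap: the curvature operator $R(X,Y)$ is $g$-skew-adjoint, hence $\mathrm{tr}\,S=n\lambda=0$, so $R(X,Y)=0$ for all $X,Y$. A tidier phrasing avoids rank-one tensors altogether: $(\widetilde\gamma-\gamma)R(X,Y)=0$ for every $t$ says $t\circ R(X,Y)=R(X,Y)\circ t$ for all $t\in\mathrm{End}(T_PM)$, so $R(X,Y)$ is central in the endomorphism algebra, hence scalar, and the trace argument finishes.
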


\begin{remark}
Let $(M,g)$ be a Riemannian manifold and let $T_{1}^{1}(M)$ be its tensor
bundle equipped with the rescaled Sasaki type metric $^{S}g_{f}.$ The
diagonal lift $^{D}\gamma $ of $\gamma \in \Im _{1}^{1}(M)$ to $T_{1}^{1}(M)$
is defined by the formulas%
\begin{eqnarray*}
^{D}\gamma ^{H}X &=&^{H}(\gamma (X)) \\
^{D}\gamma ^{V}\omega &=&-^{V}(\gamma (A))
\end{eqnarray*}%
for any $X\in \Im _{0}^{1}\left( M\right) $ and $A\in \Im _{1}^{1}\left(
M\right) $ \cite{Gezer0}. The diagonal lift $^{D}I$ of the identity tensor
field $I\in \Im _{1}^{1}(M)$ has the following properties%
\begin{eqnarray*}
^{D}I^{H}X &=&^{H}X \\
^{D}I^{V}\omega &=&-^{V}A
\end{eqnarray*}%
and satisfies $(^{D}I)^{2}=I_{T_{1}^{1}(M)}$. Thus, $^{D}I$ is an almost
paracomplex structure. Also, the rescaled Sasaki type metric $^{S}g_{f}$ is
pure with respect to $^{D}I$, i.e. the triple $\left(
T_{1}^{1}(M),^{D}I,{}^{S}g_{f}\right) $ is an almost paracomplex Norden
manifold. Finally, by using $\phi -$operator, we can say that the rescaled
Sasaki type metric $^{S}g_{f}$ is paraholomorphic with respect to $^{D}I$ if
and only if $M$ is flat.
\end{remark}

As is known that the almost paracomplex Norden structure is a specialized
Riemannian almost product structure on a Riemannian manifold. The theory of
Riemannian almost product structures was initiated by K. Yano in \cite{Yano}%
. The classification of Riemannian almost product structure with respect to
their covariant derivatives is described by A. M. Naveira in \cite{Naveira}.
This is the analogue of the classification of almost Hermitian structures by
A. Gray and L. Hervella in \cite{Gray}. Having in mind these results, M.
Staikova and K. Gribachev obtained a classification of the Riemannian almost
product structures, for which the trace vanishes (see \cite{Staikova}).
There are lots of physical applications involving a Riemannian almost
product manifold. Now we shall give some applications for almost paracomplex
Norden structures in context of almost product Riemannian manifolds.

\textit{4.1. }Let us recall almost product Riemannian manifolds. If an $n-$%
dimensional Riemannian manifold $M$, endowed with a positive definite
Riemannian metric $g$, admits a non-trivial tensor field $F$ of type $(1.1)$
such that 
\begin{equation*}
F^{2}=I
\end{equation*}%
and 
\begin{equation*}
g(FX,Y)=g(X,FY)
\end{equation*}%
for all $X,Y\in \Im _{0}^{1}(M)$, then $F$ is called an almost product
structure and $(M,F,g)$ is called an almost product Riemannian manifold. An
integrable almost product Riemannian manifold with structure tensor $F$ is
called a locally product Riemannian manifold. If $F$ is covariantly constant
with respect to the Levi-Civita connection $\nabla $ of $g$ which is
equivalent to $\phi _{F}g=0$, then ($M,F,g)$ is called a locally
decomposable Riemannian manifold.

Now consider the almost product structure ${J}$ defined by (\ref{B3.8}) and
the Levi-Civita connection $\widetilde{\nabla }$ given by Proposition \ref%
{Proposition1}. We define a $(1,2)$ tensor field on $T_{1}^{1}(M)$ by%
\begin{equation*}
\widetilde{S}(\widetilde{X},\widetilde{Y})=\frac{1}{2}\{(\widetilde{\nabla }%
_{J\widetilde{Y}}J)\widetilde{X}+J((\widetilde{\nabla }_{\widetilde{Y}}J)%
\widetilde{X})-J((\widetilde{\nabla }_{\widetilde{X}}J)\widetilde{Y})\}
\end{equation*}%
for all $\widetilde{X},\widetilde{Y}\in \Im _{0}^{1}(T_{1}^{1}(M))$. Then
the linear connection 
\begin{equation*}
^{(P)}\widetilde{\nabla }_{\widetilde{X}}\widetilde{Y}=\widetilde{\nabla }_{%
\widetilde{X}}\widetilde{Y}-\widetilde{S}(\widetilde{X},\widetilde{Y})
\end{equation*}%
is an almost product connection on $T_{1}^{1}(M)$ (for almost product
connection, see \cite{Leon}).

\begin{theorem}
Let $(M,g)$ be a Riemannian manifold and let $T_{1}^{1}(M)$ be its tensor
bundle equipped with the rescaled Sasaki type metric $^{S}g_{f}$ and the
almost product structure $J$. Then the almost product connection $^{(P)}%
\widetilde{\nabla }$ constructed by the Levi-Civita connection $\widetilde{%
\nabla }$ of $^{S}g_{f}$ and the almost product structure $J$ is as follows:%
\begin{equation}
\begin{array}{l}
{i)\mathrm{\;}}^{(P)}\widetilde{{\nabla }}{_{{}^{H}X}{}^{H}Y={}^{H}\left(
\nabla _{X}Y+\frac{1}{2f}^{f}A(X,Y)\right) ,} \\ 
{ii)\mathrm{\;}^{(P)}\widetilde{{\nabla }}_{{}^{H}X}{}^{V}B={}^{V}\left(
\nabla _{X}B\right) ,} \\ 
{iii)\mathrm{\;}^{(P)}\widetilde{{\nabla }}_{{}^{V}C}{}^{H}Y={\frac{3}{2f}}%
{}^{H}\left( g^{bl}\,R(t_{b},C_{l})Y+g_{at}(t^{a}\,(g^{-1}\circ R(\quad ,Y)%
\widetilde{C}^{\,t})\right) ,} \\ 
{iv)\mathrm{\;}^{(P)}\widetilde{{\nabla }}_{{}^{V}C}{}^{V}B=0}%
\end{array}
\label{B3.10}
\end{equation}
\end{theorem}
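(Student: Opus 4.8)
The plan is to evaluate the defining formula
\[
{}^{(P)}\widetilde{\nabla}_{\widetilde{X}}\widetilde{Y}=\widetilde{\nabla}_{\widetilde{X}}\widetilde{Y}-\widetilde{S}(\widetilde{X},\widetilde{Y}),\qquad \widetilde{S}(\widetilde{X},\widetilde{Y})=\tfrac12\{(\widetilde{\nabla}_{J\widetilde{Y}}J)\widetilde{X}+J((\widetilde{\nabla}_{\widetilde{Y}}J)\widetilde{X})-J((\widetilde{\nabla}_{\widetilde{X}}J)\widetilde{Y})\},
\]
on the four pairs obtained by letting each of $\widetilde{X},\widetilde{Y}$ run through horizontal lifts ${}^{H}X$ and vertical lifts ${}^{V}C$. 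Since $\widetilde{\nabla}J$ is a tensor field and $\widetilde{S}$ depends tensorially on it, both sides of \eqref{B3.10} are $C^{\infty}(T_{1}^{1}(M))$-bilinear, so it is enough to verify these four cases; I would carry out the computation directly from Proposition~\ref{Proposition1} (equivalently, in the adapted frame via Theorem~\ref{Theorem1}).

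First I would compute $(\widetilde{\nabla}_{\widetilde{Z}}J)\widetilde{W}=\widetilde{\nabla}_{\widetilde{Z}}(J\widetilde{W})-J(\widetilde{\nabla}_{\widetilde{Z}}\widetilde{W})$ for all four choices of $(\widetilde{Z},\widetilde{W})$. Since $J({}^{H}X)=-{}^{H}X$ and $J({}^{V}A)={}^{V}A$, the operator $J$ merely reverses the horizontal component of a vector and fixes its vertical component, so $(\widetilde{\nabla}_{\widetilde{Z}}J)\widetilde{W}$ is, up to sign, twice the ``off-type'' component of $\widetilde{\nabla}_{\widetilde{Z}}\widetilde{W}$. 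Using Proposition~\ref{Proposition1}(i)--(iv) this gives
\[
(\widetilde{\nabla}_{^{H}X}J)^{H}Y=-(\widetilde{\gamma}-\gamma)R(X,Y),\qquad (\widetilde{\nabla}_{^{V}C}J)^{H}Y=0,\qquad (\widetilde{\nabla}_{^{V}C}J)^{V}B=0,
\]
\[
(\widetilde{\nabla}_{^{H}X}J)^{V}B=\tfrac{1}{f}\,{}^{H}\!\left(g^{bj}\,R(t_{b},B_{j})X+g_{ai}\,(t^{a}(g^{-1}\circ R(\quad ,X)\widetilde{B}^{\,i})\right),
\]
where the two vanishing identities are immediate because $\widetilde{\nabla}_{^{V}C}{}^{H}Y$ is purely horizontal and $\widetilde{\nabla}_{^{V}C}{}^{V}B=0$.

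Next I would substitute these into the expression for $\widetilde{S}$, using that $J\,{}^{H}Y=-{}^{H}Y$ converts $\widetilde{\nabla}_{J\widetilde{Y}}$ into $-\widetilde{\nabla}_{\widetilde{Y}}$ when $\widetilde{Y}$ is a horizontal lift (and leaves it unchanged when $\widetilde{Y}$ is vertical), together with the antisymmetry $R(Y,X)=-R(X,Y)$. Case by case this yields $\widetilde{S}({}^{H}X,{}^{H}Y)=\tfrac12(\widetilde{\gamma}-\gamma)R(X,Y)$, which cancels exactly the vertical half of $\widetilde{\nabla}_{^{H}X}{}^{H}Y$ and produces \eqref{B3.10}(i); $\widetilde{S}({}^{H}X,{}^{V}B)=\tfrac{1}{2f}\,{}^{H}(\cdots)$, which cancels the horizontal part of $\widetilde{\nabla}_{^{H}X}{}^{V}B$ and produces \eqref{B3.10}(ii); $\widetilde{S}({}^{V}C,{}^{V}B)=0$, giving \eqref{B3.10}(iv); and $\widetilde{S}({}^{V}C,{}^{H}Y)=-\tfrac{1}{f}\,{}^{H}(\cdots)$, so that $\widetilde{\nabla}_{^{V}C}{}^{H}Y-\widetilde{S}({}^{V}C,{}^{H}Y)=\tfrac{1}{2f}\,{}^{H}(\cdots)+\tfrac{1}{f}\,{}^{H}(\cdots)=\tfrac{3}{2f}\,{}^{H}(\cdots)$, which is \eqref{B3.10}(iii).

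No step presents a conceptual difficulty; the only place that requires care is the sign and coefficient bookkeeping in case (iii), where the three terms of $\widetilde{S}$ must be combined with the right signs so that $\widetilde{S}$ \emph{reinforces} rather than cancels the vertical-to-horizontal term of $\widetilde{\nabla}$, thereby producing the factor $\tfrac{3}{2f}$ rather than $\tfrac{1}{2f}$ or $0$. As consistency checks one can verify that the connection just obtained satisfies ${}^{(P)}\widetilde{\nabla}J=0$, and that for $f=1$ it reduces to the almost product connection associated with the Sasaki type metric on $T_{1}^{1}(M)$.
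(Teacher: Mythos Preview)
Your proposal is correct and is exactly the direct computation the paper has in mind: the paper does not spell out a proof for this theorem but simply states the result after defining $\widetilde{S}$ and ${}^{(P)}\widetilde{\nabla}$, and your case-by-case evaluation of $(\widetilde{\nabla}_{\widetilde{Z}}J)\widetilde{W}$ from Proposition~\ref{Proposition1} followed by substitution into $\widetilde{S}$ is the canonical verification. One small wording quibble: ${}^{(P)}\widetilde{\nabla}$ is a connection, hence not $C^{\infty}$-linear in its second argument, so the justification for checking only the four lift pairs is not ``bilinearity'' but rather that $\widetilde{S}=\widetilde{\nabla}-{}^{(P)}\widetilde{\nabla}$ is tensorial and horizontal/vertical lifts span; your computations themselves are fine.
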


Similarly, by means of the almost product structure $^{D}I$ and the
Levi-Civita connection $\widetilde{\nabla }$ of the rescaled Sasaki type
metric $^{S}g_{f}$, another almost product connection can be constructed.
Denoting by $^{^{(P)}\widetilde{\nabla }}T$, the torsion tensor of $^{(P)}%
\widetilde{\nabla }$, we have from (\ref{A2.3}) and (\ref{B3.10})%
\begin{eqnarray*}
^{^{(P)}\widetilde{\nabla }}T(^{V}C,^{V}B) &=&0, \\
^{^{(P)}\widetilde{\nabla }}T(^{V}C,^{H}Y) &=&{{\frac{3}{2f}}{}^{H}\left(
g^{bl}\,R(t_{b},C_{l})Y+g_{at}(t^{a}\,(g^{-1}\circ R(\quad ,Y)\widetilde{C}%
^{\,t})\right) }, \\
^{^{(P)}\widetilde{\nabla }}T(^{H}X,^{H}Y) &=&-{(\tilde{\gamma}-\gamma
)R(X,Y)}.
\end{eqnarray*}%
Hence we have the theorem below.

\begin{theorem}
Let $(M,g)$ be a Riemannian manifold and let $T_{1}^{1}(M)$ be its tensor
bundle. The almost product connection $^{(P)}\widetilde{\nabla }$
constructed by the Levi-Civita connection $\widetilde{\nabla }$ of the
rescaled Sasaki type metric $^{S}g_{f}$ and the almost product structure $J$
is symmetric if and only if $M$ is flat.
\end{theorem}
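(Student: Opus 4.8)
The plan is to read off the torsion of ${}^{(P)}\widetilde{\nabla}$ from the three bracket-type expressions displayed immediately before the theorem and then observe which of them can vanish identically. Recall that for a linear connection the torsion is $T(\widetilde X,\widetilde Y)={}^{(P)}\widetilde{\nabla}_{\widetilde X}\widetilde Y-{}^{(P)}\widetilde{\nabla}_{\widetilde Y}\widetilde X-[\widetilde X,\widetilde Y]$, so symmetry of ${}^{(P)}\widetilde{\nabla}$ is exactly the statement that this quantity vanishes for every pair of vector fields on $T_{1}^{1}(M)$. Since the vertical and horizontal lifts $\{{}^{V}C,{}^{H}X\}$ span $\Im_{0}^{1}(T_{1}^{1}(M))$ pointwise, it suffices to check the three cases $({}^{V}C,{}^{V}B)$, $({}^{V}C,{}^{H}Y)$, $({}^{H}X,{}^{H}Y)$, and these are precisely the three lines already computed in the excerpt.

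First I would record the forward direction cleanly: the third line gives ${}^{^{(P)}\widetilde{\nabla}}T({}^{H}X,{}^{H}Y)=-(\tilde\gamma-\gamma)R(X,Y)$. If $M$ is flat then $R\equiv 0$, so this term vanishes; moreover flatness kills the curvature-dependent expression in the second line as well (the integrand there is built from $R$), and the first line is already zero. Hence all three components of $T$ vanish, and because they were computed on a spanning set of vector fields, $T\equiv 0$, i.e. ${}^{(P)}\widetilde{\nabla}$ is symmetric. For the converse, suppose ${}^{(P)}\widetilde{\nabla}$ is symmetric; then in particular ${}^{^{(P)}\widetilde{\nabla}}T({}^{H}X,{}^{H}Y)=0$ for all $X,Y\in\Im_{0}^{1}(M)$, so $(\tilde\gamma-\gamma)R(X,Y)=0$ identically on $T_{1}^{1}(M)$.

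The one genuine point to nail down is that $(\tilde\gamma-\gamma)R(X,Y)=0$ for all $X,Y$ forces $R=0$ on $M$. Here I would use the explicit coordinate description given in the excerpt right after \eqref{A2.3}, namely that the vertical component of $(\tilde\gamma-\gamma)R(X,Y)$ at a point $(x^{j},t_{j}^{i})$ equals $t_{m}^{i}R_{klj}{}^{m}X^{k}Y^{l}-t_{j}^{m}R_{klm}{}^{i}X^{k}Y^{l}$. Evaluating at the zero tensor $t=0$ in the fibre over an arbitrary $P\in M$ makes both terms vanish trivially, so I instead evaluate at a well-chosen $t$: taking $t=I$ (the identity $(1,1)$-tensor, $t_{j}^{i}=\delta_{j}^{i}$), the expression becomes $R_{klj}{}^{i}X^{k}Y^{l}-R_{klj}{}^{i}X^{k}Y^{l}$, which is identically zero and thus gives no information — so that choice is too symmetric. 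The right move is to pick $t$ of rank one, say $t_{j}^{i}=a^{i}b_{j}$ for fixed covectors/vectors, which reduces the vanishing condition to $a^{i}(b_{m}R_{klj}{}^{m})-b_{j}(a^{m}R_{klm}{}^{i})=0$; letting $a,b$ range and contracting appropriately isolates $R_{klj}{}^{m}$ up to the trace term, and combining two such relations (or using the first Bianchi identity together with the antisymmetry $R_{klj}{}^{m}=-R_{lkj}{}^{m}$) yields $R=0$. Alternatively, and more simply, one notes that $\gamma R(X,Y)$ and $\tilde\gamma R(X,Y)$ are linearly independent vertical vector fields on $T_{1}^{1}(M)$ whenever $R(X,Y)\neq 0$ — they correspond to the two distinct ways $R(X,Y)\in\Im_{1}^{1}(M)$ acts on the fibre coordinate $t$ — so their difference can vanish only if $R(X,Y)=0$. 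Since $X,Y$ were arbitrary, $M$ is flat. This linear-independence observation is the main (and only) obstacle; everything else is bookkeeping with the formulas already established.
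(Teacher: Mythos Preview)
Your approach coincides with the paper's: compute the torsion of ${}^{(P)}\widetilde{\nabla}$ on the spanning set of lifts (the three displayed lines) and observe that every nonzero piece is built from $R$. The paper in fact stops there --- it simply writes ``Hence we have the theorem below'' without justifying why $(\tilde\gamma-\gamma)R(X,Y)=0$ for all $X,Y$ forces $R=0$ --- so you are already doing more than the paper does.

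That said, your ``alternative, simpler'' justification has a small flaw. The claim that $\gamma\varphi$ and $\tilde\gamma\varphi$ are linearly independent whenever $\varphi\neq 0$ is false: from the coordinate formula $t_m^i\varphi_j^m - t_j^m\varphi_m^i$, the vector field $(\tilde\gamma-\gamma)\varphi$ vanishes identically on $T_1^1(M)$ precisely when $\varphi=\lambda I$ for some scalar $\lambda$ (differentiate in $t$ to get $\delta^i_\alpha\varphi_j^\beta=\delta^\beta_j\varphi_\alpha^i$, then set $\alpha=i$). So a nonzero multiple of the identity is a genuine counterexample to the linear-independence statement. The repair is immediate once you note that $R(X,Y)$, being $g$-skew as an endomorphism, is trace-free; hence $R(X,Y)=\lambda I$ forces $\lambda=0$ and thus $R(X,Y)=0$. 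With this one extra line your argument is complete and in fact fills a gap the paper leaves open.
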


As is well-known, if there exists a symmetric almost product connection on $%
M $ then the almost product structure $J$ is integrable \cite{Leon}. The
converse is also true \cite{Fujimoto}. If $J$ is covariantly constant with
respect to the Levi-Civita connection $\widetilde{\nabla }$ of the rescaled
Sasaki type metric $^{S}g_{f}$ which is equivalent to $\phi _{J}^{\text{ }%
S}g_{f}=0$, then $(T_{1}^{1}(M),J,^{S}g_{f})$ is called a locally
decomposable Riemannian manifold. In view of Theorem \ref{Theorem3}, we have
the following conclusion.

\begin{corollary}
Let $(M,g)$ be a Riemannian manifold and $T_{1}^{1}(M)$ be its tensor bundle
equipped with the rescaled Sasaki type metric $^{S}g_{f}$ and the almost
product structure $J$. The triple $(T_{1}^{1}(M),J,^{S}g_{f})$ is a locally
decomposable Riemannian manifold if and only if $M$ is flat.
\end{corollary}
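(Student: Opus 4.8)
The plan is to deduce this corollary directly from Theorem~\ref{Theorem3} by unwinding the two definitions involved. First I would recall that, by definition, the triple $(T_{1}^{1}(M),J,{}^{S}g_{f})$ is a locally decomposable Riemannian manifold precisely when $J$ is parallel with respect to the Levi-Civita connection $\widetilde{\nabla }$ of ${}^{S}g_{f}$, and that this is equivalent to $\phi _{J}{}^{S}g_{f}=0$; this equivalence is exactly the statement of Salimov and collaborators quoted in the excerpt (for almost paracomplex Norden manifolds, $\nabla \varphi =0 \iff \phi _{\varphi }g=0$), which applies since $(T_{1}^{1}(M),J,{}^{S}g_{f})$ is an almost paracomplex Norden manifold by Theorem~\ref{Theorem2}.

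Next I would invoke Theorem~\ref{Theorem3}, which asserts that $(T_{1}^{1}(M),J,{}^{S}g_{f})$ is a para-K\"{a}hler-Norden (paraholomorphic Norden) manifold if and only if $M$ is flat. But ``paraholomorphic Norden manifold'' means precisely that the Norden metric is paraholomorphic, i.e.\ $\phi _{J}{}^{S}g_{f}=0$, which by the previous paragraph is the same as $\widetilde{\nabla }J=0$, which is exactly the defining condition of locally decomposable. Hence the chain of equivalences
\begin{equation*}
(T_{1}^{1}(M),J,{}^{S}g_{f})\ \text{locally decomposable} \iff \widetilde{\nabla }J=0 \iff \phi _{J}{}^{S}g_{f}=0 \iff M\ \text{flat}
\end{equation*}
closes the argument.

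There is essentially no obstacle here: the corollary is a pure restatement, and the only point requiring a line of justification is that the notions ``locally decomposable Riemannian manifold'' and ``paraholomorphic Norden manifold'' coincide for the pair $(J,{}^{S}g_{f})$ once one passes through the criterion $\nabla \varphi =0 \iff \phi _{\varphi }g=0$. If a self-contained proof were desired instead, one could alternatively compute $(\widetilde{\nabla }_{E_{\alpha }}J)E_{\beta }$ directly using the Levi-Civita connection components from Theorem~\ref{Theorem1}, or equivalently use the coordinate-free formulas of Proposition~\ref{Proposition1} together with $J({}^{H}X)=-{}^{H}X$, $J({}^{V}A)={}^{V}A$; one finds that the only nonvanishing components of $\widetilde{\nabla }J$ involve the curvature term $(\widetilde{\gamma }-\gamma )R$, exactly paralleling the computation \eqref{B3.9} of $\phi _{J}{}^{S}g_{f}$, so that $\widetilde{\nabla }J=0$ forces $R=0$ and conversely. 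Either way, the flatness of $M$ is both necessary and sufficient, which is the assertion of the corollary.
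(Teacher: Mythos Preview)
Your proposal is correct and follows essentially the same route as the paper: the paper simply notes that $(T_{1}^{1}(M),J,{}^{S}g_{f})$ being locally decomposable means $\widetilde{\nabla}J=0$, which is equivalent to $\phi_{J}{}^{S}g_{f}=0$, and then invokes Theorem~\ref{Theorem3}. Your additional remark about recovering the result by directly computing $\widetilde{\nabla}J$ from Theorem~\ref{Theorem1} or Proposition~\ref{Proposition1} is a valid alternative not spelled out in the paper.
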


\textit{4.2.} Let $(M_{2k},\varphi ,g)$ be a non-integrable almost
paracomplex manifold with a Norden metric. An almost paracomplex Norden
manifold $(M_{2k},\varphi ,g)$ is a quasi-para-K\"{a}hler--Norden manifold,
if $\underset{X,Y,Z}{\sigma }g((\nabla _{X}\varphi )Y,Z)=0$, where $\sigma $
is the cyclic sum by three arguments \cite{Manev}. In \cite{Salimov4}, the
authors proved that $\underset{X,Y,Z}{\sigma }g((\nabla _{X}\varphi )Y,Z)=0$
is equivalent to $(\phi _{\varphi }g)(X,Y,Z)+(\phi _{\varphi
}g)(Y,Z,X)+(\phi _{\varphi }g)(Z,X,Y)=0.$ We compute%
\begin{equation*}
A(\tilde{X},\tilde{Y},\tilde{Z})=(\phi _{J}{}^{S}g_{f})(\tilde{X},\tilde{Y},%
\tilde{Z})+(\phi _{J}{}^{S}g_{f})(\widetilde{Y},\widetilde{Z},\widetilde{X}%
)+(\phi _{J}{}^{S}g_{f})(\widetilde{Z},\widetilde{X},\widetilde{Y})
\end{equation*}%
for all $\widetilde{X},\widetilde{Y},\widetilde{Z}\in \Im
_{0}^{1}(T_{1}^{1}(M)).$ By means of (\ref{B3.9}), we have $A(\tilde{X},%
\tilde{Y},\tilde{Z})=0$ for all $\widetilde{X},\widetilde{Y},\widetilde{Z}%
\in \Im _{0}^{1}(T_{1}^{1}(M)).$ Hence we state the following theorem.

\begin{theorem}
\label{Theorem4}Let $(M,g)$ be a Riemannian manifold and $T_{1}^{1}(M)$ be
its tensor bundle equipped with the rescaled Sasaki type metric $^{S}g_{f}$
and the almost paracomplex structure $J$ defined by (\ref{B3.8}). The triple 
$(T_{1}^{1}(M),J,^{S}g_{f})$ is a quasi-para-K\"{a}hler--Norden manifold.
\end{theorem}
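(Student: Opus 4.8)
The plan is to verify the quasi-para-K\"{a}hler--Norden condition directly via the $\phi$-operator, using the fact that $\underset{\tilde X,\tilde Y,\tilde Z}{\sigma}\,{}^{S}g_{f}((\widetilde{\nabla}_{\tilde X}J)\tilde Y,\tilde Z)=0$ is equivalent to the cyclic sum $A(\tilde X,\tilde Y,\tilde Z)=(\phi_{J}{}^{S}g_{f})(\tilde X,\tilde Y,\tilde Z)+(\phi_{J}{}^{S}g_{f})(\tilde Y,\tilde Z,\tilde X)+(\phi_{J}{}^{S}g_{f})(\tilde Z,\tilde X,\tilde Y)$ vanishing identically (the result of \cite{Salimov4} quoted just above the statement). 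So the whole problem reduces to showing $A\equiv 0$, and the computation of $\phi_{J}{}^{S}g_{f}$ on all pairs of horizontal and vertical arguments has \emph{already} been carried out in equation (\ref{B3.9}): the only nonzero components are $(\phi_{J}{}^{S}g_{f})({}^{H}X,{}^{V}B,{}^{H}Z)=2\,{}^{S}g_{f}({}^{V}B,(\widetilde{\gamma}-\gamma)R(X,Z))$ and $(\phi_{J}{}^{S}g_{f})({}^{H}X,{}^{H}Y,{}^{V}C)=2\,{}^{S}g_{f}((\widetilde{\gamma}-\gamma)R(X,Y),{}^{V}C)$.

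First I would split the verification by the type (horizontal or vertical) of each of the three arguments, giving $2^{3}=8$ cases up to the cyclic symmetry built into $A$. For the cases $({}^{H}X,{}^{H}Y,{}^{H}Z)$, $({}^{V}A,{}^{V}B,{}^{V}C)$, $({}^{H}X,{}^{V}B,{}^{V}C)$ and its cyclic relatives, and $({}^{V}A,{}^{H}Y,{}^{H}Z)$ and relatives, each of the three terms of $A$ is individually zero by (\ref{B3.9}) (these $\phi$-values were noted to be zero), so $A=0$ trivially. The only genuinely interesting case is the mixed one with exactly one vertical and two horizontal slots, say $\tilde X={}^{H}X$, $\tilde Y={}^{H}Y$, $\tilde Z={}^{V}C$. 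Here
\[
A({}^{H}X,{}^{H}Y,{}^{V}C)=(\phi_{J}{}^{S}g_{f})({}^{H}X,{}^{H}Y,{}^{V}C)+(\phi_{J}{}^{S}g_{f})({}^{H}Y,{}^{V}C,{}^{H}X)+(\phi_{J}{}^{S}g_{f})({}^{V}C,{}^{H}X,{}^{H}Y).
\]
By (\ref{B3.9}) the first term is $2\,{}^{S}g_{f}((\widetilde{\gamma}-\gamma)R(X,Y),{}^{V}C)$, the second is $2\,{}^{S}g_{f}({}^{V}C,(\widetilde{\gamma}-\gamma)R(Y,X))$, and the third vanishes (it is one of the components listed as zero). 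Using the antisymmetry $R(X,Y)=-R(Y,X)$ of the curvature operator, hence $(\widetilde{\gamma}-\gamma)R(X,Y)=-(\widetilde{\gamma}-\gamma)R(Y,X)$, together with the symmetry of the metric ${}^{S}g_{f}$ in its two arguments, the first two terms cancel exactly, so $A({}^{H}X,{}^{H}Y,{}^{V}C)=0$. The same argument, merely permuting which slot is vertical, disposes of $A({}^{H}X,{}^{V}B,{}^{H}Z)$ and $A({}^{V}A,{}^{H}Y,{}^{H}Z)$.

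Since the adapted-frame vectors ${}^{H}X$ and ${}^{V}A$ span $\Im_{0}^{1}(T_{1}^{1}(M))$ and both $\phi_{J}{}^{S}g_{f}$ and $A$ are tensorial (the $\phi$-operator produces a genuine tensor field when applied to the pure tensor ${}^{S}g_{f}$), vanishing on all such triples of lifts forces $A(\tilde X,\tilde Y,\tilde Z)=0$ for arbitrary $\tilde X,\tilde Y,\tilde Z$. By the equivalence from \cite{Salimov4} this is precisely $\underset{\tilde X,\tilde Y,\tilde Z}{\sigma}\,{}^{S}g_{f}((\widetilde{\nabla}_{\tilde X}J)\tilde Y,\tilde Z)=0$, so $(T_{1}^{1}(M),J,{}^{S}g_{f})$ is quasi-para-K\"{a}hler--Norden. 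I do not expect any real obstacle: the heavy lifting (computing $\phi_{J}{}^{S}g_{f}$) is already done in (\ref{B3.9}), and the remaining content is the two-line cancellation powered by the skew-symmetry of $R$ — the key point to get right is that the \emph{two} nonzero $\phi$-terms entering the relevant cyclic sum carry opposite-order curvature arguments and therefore cancel rather than add. It is worth remarking, as a sanity check, that this holds with no flatness hypothesis, in contrast to Theorem \ref{Theorem3}, which is exactly what one expects since quasi-para-K\"{a}hler is a strictly weaker condition than para-K\"{a}hler.
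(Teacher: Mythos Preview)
Your proposal is correct and follows essentially the same approach as the paper: both verify the quasi-para-K\"{a}hler--Norden condition by showing that the cyclic sum $A(\tilde X,\tilde Y,\tilde Z)$ of $\phi_{J}{}^{S}g_{f}$ vanishes, using the previously computed values (\ref{B3.9}); you simply spell out the case analysis and the curvature-antisymmetry cancellation that the paper leaves implicit. One small slip to fix: in your list of trivially-zero cases you include ``$({}^{V}A,{}^{H}Y,{}^{H}Z)$ and relatives,'' but this is precisely the one-vertical/two-horizontal family you (correctly) treat as the nontrivial case a few lines later---delete it from the trivial list.
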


O. Gil-Medrano and A. M. Naveira proved that both distributions of the
almost product structure on the Riemannian manifold ($M,\varphi ,g)$ are
totally geodesic if and only if $\underset{X,Y,Z}{\sigma }g((\nabla
_{X}\varphi )Y,Z)=0$ for any $X,Y,Z\in \Im _{0}^{1}(M)$ \cite{Gil}$.$ As a
consequence of Theorem \ref{Theorem4}, we have the following.

\begin{corollary}
Both distributions of the almost product Riemannian manifold $(T_{1}^{1}(M),$
$J,^{S}g_{f})$ are totally geodesic.
\end{corollary}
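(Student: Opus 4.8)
The plan is to invoke the cited result of Gil-Medrano and Naveira directly: both distributions of an almost product Riemannian manifold $(N,F,g_N)$ are totally geodesic if and only if the cyclic sum $\sigma_{X,Y,Z}\,g_N((\nabla_X F)Y,Z)=0$ for all vector fields $X,Y,Z$ on $N$. So the task reduces to verifying that this vanishing condition holds for $N=T_1^1(M)$, $F=J$, $g_N={}^{S}g_f$, and $\nabla=\widetilde{\nabla}$ the Levi-Civita connection of ${}^{S}g_f$.

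The key observation is that, by the discussion preceding Theorem \ref{Theorem4}, the cyclic condition $\sigma_{X,Y,Z}\,{}^{S}g_f((\widetilde{\nabla}_X J)Y,Z)=0$ is equivalent (by the result of \cite{Salimov4}) to $A(\tilde X,\tilde Y,\tilde Z)=0$, where $A$ is the cyclic sum of $\phi_J{}^{S}g_f$ over its three slots. But this is exactly what was just established: using the explicit components \eqref{B3.9} of $\phi_J{}^{S}g_f$, the terms that appear in the cyclic sum cancel, since the only nonzero components are $(\phi_J{}^{S}g_f)({}^HX,{}^VB,{}^HZ)=2{}^{S}g_f({}^VB,(\widetilde{\gamma}-\gamma)R(X,Z))$ and $(\phi_J{}^{S}g_f)({}^HX,{}^HY,{}^VC)=2{}^{S}g_f((\widetilde{\gamma}-\gamma)R(X,Y),{}^VC)$, and these occur in the cyclic sum with opposite signs coming from the antisymmetry $R(X,Z)=-R(Z,X)$. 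Thus $A\equiv 0$, which is precisely the content of Theorem \ref{Theorem4}: $(T_1^1(M),J,{}^{S}g_f)$ is quasi-para-K\"ahler--Norden.

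Therefore the corollary follows in one line: since Theorem \ref{Theorem4} gives $\sigma_{X,Y,Z}\,{}^{S}g_f((\widetilde{\nabla}_X J)Y,Z)=0$ on all of $T_1^1(M)$, the Gil-Medrano--Naveira criterion \cite{Gil} immediately yields that both eigendistributions of $J$ — namely the horizontal distribution $\{{}^HX\}$ and the vertical distribution $\{{}^VA\}$ — are totally geodesic in $(T_1^1(M),{}^{S}g_f)$. There is essentially no obstacle here; the corollary is a formal consequence of Theorem \ref{Theorem4} together with the quoted equivalence. If one wanted a self-contained argument avoiding the black-box citation, the only real work would be to check directly from Proposition \ref{Proposition1} that the second fundamental forms of the two distributions vanish: for the vertical distribution this is immediate from $\widetilde{\nabla}_{{}^VC}{}^VB=0$, so that distribution is not merely totally geodesic but its leaves are even flatly embedded; for the horizontal distribution one must verify that the vertical component of $\widetilde{\nabla}_{{}^HX}{}^HY$ symmetrized in $X,Y$ vanishes, which holds because that vertical component is $\tfrac12(\widetilde{\gamma}-\gamma)R(X,Y)$, antisymmetric in $X$ and $Y$. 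That symmetry/antisymmetry bookkeeping is the only mildly delicate point, and it is already implicit in the proof of Theorem \ref{Theorem4}.
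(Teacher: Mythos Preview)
Your proposal is correct and follows exactly the paper's own reasoning: the corollary is stated immediately after Theorem~\ref{Theorem4} and the Gil-Medrano--Naveira criterion, and the paper simply says it is a consequence of Theorem~\ref{Theorem4}. Your additional direct check via Proposition~\ref{Proposition1} (that $\widetilde{\nabla}_{{}^VC}{}^VB=0$ and the vertical part of $\widetilde{\nabla}_{{}^HX}{}^HY$ is antisymmetric in $X,Y$) is a nice self-contained supplement not present in the paper, but the core argument is identical.
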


\textit{4.3. }The famous Golden section $\eta =\frac{1+\sqrt{5}}{2}\approx
1,61803398874989...$ \ being the root of the equation $x^{2}-x-1=0$ is an
irrational number which has many applications in mathematics, computational
science, biology, art, architecture, nature, etc. \noindent In the last few
years, the Golden proportion has played an increasing role in modern
physical research and it has a unique significant role in atomic physics 
\cite{Heyrovska}. The Golden proportion has also interesting properties in
topology of four-manifolds, in conformal field theory, in mathematical
probability theory and in Cantorian spacetime \cite{Marek1,Marek2}. Inspired
by Golden Ratio, a new structure on a Riemannian manifold which named the
golden structure was constructed by M. Crasmareanu and C. Hretcanu \cite%
{Cra,Hre2,Hre1}. For a manifold $M$, let $\psi $ be a $(1,1)$-tensor field
on $M$. If the polynomial $X^{2}-X-1$ is the minimal polynomial for a
structure $\varphi $ satisfying $\psi ^{2}-\psi -I=0$ , then $\psi $ is
called a golden structure on $M$ and $(M,\psi )$ is a golden manifold. Let $%
(M,g)$ be a Riemannian manifold endowed with the Golden structure $\psi $
such that 
\begin{equation*}
g(\psi X,Y)=g(X,\psi Y),
\end{equation*}%
for all $X,Y\in \Im _{0}^{1}(M)$. The triple $(M,\psi ,g)$ is named a Golden
Riemannian manifold.

If $\psi $ is a Golden structure on $M$, then 
\begin{equation}
F=\frac{1}{\sqrt{5}}(2\psi -I)  \label{B3.11}
\end{equation}%
is an almost product structure on $M$. Conversely, 
\begin{equation}
\psi =\frac{1}{2}(I+\sqrt{5}F)  \label{B3.12}
\end{equation}%
is a Golden structure on $M$. If a Riemannian metric $g$ is pure with
respect to an almost product structure $F$, then the Riemannian metric $g$
is pure with respect to the corresponding Golden structure $\psi .$ A simple
computation, using the expression of the corresponding almost product
structure via (\ref{B3.11}) gives: 
\begin{equation}
\phi _{F}g=\frac{2}{\sqrt{5}}\phi _{\psi }g.  \label{B3.13}
\end{equation}

In \cite{Gezer}, the first author and collaborators have proved that 1) Let $%
(M,\psi ,g)$ be a Golden Riemannian manifold and $F$ its corresponding
almost product structure. The golden structure $\psi $ is integrable if $%
\phi _{\psi }g=0$ (or equivalently $\phi _{F}g=0)$ and 2) Let $(M,\psi ,g)$
be a Golden Riemannian manifold and $F$ its corresponding almost product
structure. The manifold $M$ is a locally decomposable Golden Riemannian
manifold if and only if $\phi _{F}g=0$ (or equivalently $\phi _{\psi }g=0$ ).

By means of the almost product structure $J$, from (\ref{B3.12}) we can
construct a Golden structure on $T_{1}^{1}(M)$ defined by the formulas%
\begin{equation}
\left\{ 
\begin{array}{l}
\widetilde{\psi }{({}^{H}X)=(}\frac{1-\sqrt{5}}{2}){{}^{H}X,} \\ 
\widetilde{\psi }{({}^{V}A)={(}\frac{1+\sqrt{5}}{2})^{V}A.}%
\end{array}%
\right.  \label{B3.14}
\end{equation}%
for any $X\in \Im _{0}^{1}(M)$ and $A\in \Im _{1}^{1}(M)$. Also the
following hold%
\begin{equation*}
{}^{S}g_{f}\left( \widetilde{\psi }\widetilde{X},\widetilde{Y}\right)
={}{}^{S}g_{f}\left( \widetilde{X},\widetilde{\psi }\widetilde{Y}\right)
\end{equation*}%
for any $\widetilde{X},\widetilde{Y}\in \Im _{0}^{1}\left(
T_{1}^{1}(M)\right) $, i.e. ${}^{S}g_{f}$ is pure with respect to $%
\widetilde{\psi }$. In view of Theorem \ref{Theorem3}, by (\ref{B3.13}), we
have the following result.

\begin{corollary}
Let $(M,g)$ be a Riemannian manifold and let $T_{1}^{1}(M)$ be its tensor
bundle equipped with the rescaled Sasaki type metric $^{S}g_{f}$ and the
Golden structure $\widetilde{\psi }$ associated with the almost product
structure $J$. The triple $\left( T_{1}^{1}(M),\widetilde{\psi }%
,{}^{S}g_{f}\right) $ is a a locally decomposable Golden Riemannian manifold
if and only if $M$ is flat.
\end{corollary}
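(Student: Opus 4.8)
The plan is to reduce the claim entirely to Theorem~\ref{Theorem3} via the algebraic relation~(\ref{B3.13}) between the $\phi$-operators of a golden structure and its associated almost product structure. First I would record that the golden structure $\widetilde{\psi}$ of~(\ref{B3.14}) is exactly the one produced from the almost product structure $J$ of~(\ref{B3.8}) through the correspondence $\psi=\tfrac12(I+\sqrt5\,F)$: indeed, since $J({}^{H}X)=-{}^{H}X$ and $J({}^{V}A)={}^{V}A$, formula~(\ref{B3.12}) gives $\widetilde{\psi}({}^{H}X)=\tfrac12(1-\sqrt5){}^{H}X$ and $\widetilde{\psi}({}^{V}A)=\tfrac12(1+\sqrt5){}^{V}A$, which is precisely~(\ref{B3.14}). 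Consequently the whole machinery relating $\phi_F g$ and $\phi_\psi g$ applies with $F=J$ and $g={}^{S}g_{f}$.

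Next I would invoke the two facts quoted from~\cite{Gezer} just before the corollary: for a Golden Riemannian manifold $(M,\psi,g)$ with associated almost product structure $F$, the manifold is a locally decomposable Golden Riemannian manifold if and only if $\phi_F g=0$ (equivalently $\phi_\psi g=0$), these being equivalent by~(\ref{B3.13}) since $\tfrac{2}{\sqrt5}\neq 0$. Applying this to $\bigl(T_{1}^{1}(M),\widetilde{\psi},{}^{S}g_{f}\bigr)$ with $F=J$, the triple $\bigl(T_{1}^{1}(M),\widetilde{\psi},{}^{S}g_{f}\bigr)$ is a locally decomposable Golden Riemannian manifold if and only if $\phi_{J}\,{}^{S}g_{f}=0$ on $T_{1}^{1}(M)$.

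Finally I would close the loop with Theorem~\ref{Theorem3}: the computation~(\ref{B3.9}) of $\phi_{J}\,{}^{S}g_{f}$ on pairs of horizontal and vertical vector fields shows that all its components vanish except $(\phi_{J}{}^{S}g_{f})({}^{H}X,{}^{V}B,{}^{H}Z)=2\,{}^{S}g_{f}({}^{V}B,(\widetilde{\gamma}-\gamma)R(X,Z))$ and its cyclic companion, and since ${}^{S}g_{f}$ restricted to the vertical distribution is the nondegenerate form $g_{it}g^{jl}$, these vanish for all $X,Z,B$ precisely when $(\widetilde{\gamma}-\gamma)R(X,Z)=0$ for all $X,Z$, i.e.\ when the curvature tensor $R$ of $(M,g)$ vanishes identically. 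Hence $\phi_{J}\,{}^{S}g_{f}=0$ if and only if $M$ is flat, and combining the three steps yields the corollary. There is no real obstacle here; the only point requiring a little care is the bookkeeping check that $\widetilde{\psi}$ in~(\ref{B3.14}) is genuinely the golden structure associated to $J$ via~(\ref{B3.12}), after which everything is a formal consequence of Theorem~\ref{Theorem3}, the identity~(\ref{B3.13}), and the cited results of~\cite{Gezer}.
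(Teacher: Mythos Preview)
Your proposal is correct and follows essentially the same approach as the paper: reduce to Theorem~\ref{Theorem3} via the identity~(\ref{B3.13}) and the characterization of locally decomposable Golden Riemannian manifolds from~\cite{Gezer}. You have simply made explicit the verifications (that $\widetilde{\psi}$ is the golden structure associated to $J$, and that vanishing of the components~(\ref{B3.9}) forces $R=0$) which the paper leaves implicit in the sentence ``In view of Theorem~\ref{Theorem3}, by~(\ref{B3.13}), we have the following result.''
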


\begin{remark}
Another Golden structure associated with the almost product structure $^{D}I$
is as follows:%
\begin{equation*}
\left\{ 
\begin{array}{l}
\overline{\psi }{({}^{H}X)=(}\frac{1+\sqrt{5}}{2}){{}^{H}X,} \\ 
\overline{\psi }{({}^{V}A)={(}\frac{1-\sqrt{5}}{2})^{V}A.}%
\end{array}%
\right.
\end{equation*}%
Similarly, we say that the triple $\left( T_{1}^{1}(M),\overline{\psi }%
,{}^{S}g_{f}\right) $ is a a locally decomposable Golden Riemannian manifold
if and only if $M$ is flat.\bigskip
\end{remark}

\section{\protect\bigskip Curvature properties of the rescaled Sasaki type
metric}

The Riemannian curvature tensor $\widetilde{R}$ of $T_{1}^{1}(M)$ with the
rescaled Sasaki type metric\textit{\ ${}$}$^{S}g_{f}$ is obtained from the
well-known formula%
\begin{equation*}
\widetilde{R}\left( \widetilde{X},\widetilde{Y}\right) \widetilde{Z}=%
\widetilde{\nabla }_{\widetilde{X}}\widetilde{\nabla }_{\widetilde{Y}}%
\widetilde{Z}-\widetilde{\nabla }_{\widetilde{Y}}\widetilde{\nabla }_{%
\widetilde{X}}\widetilde{Z}-\widetilde{\nabla }_{\left[ \widetilde{X},%
\widetilde{Y}\right] }\widetilde{Z}
\end{equation*}%
for all $\widetilde{X},\widetilde{Y},\widetilde{Z}\in \Im
_{0}^{1}(T_{1}^{1}(M))$. Then from Lemma \ref{Lemma1} and Theorem \ref%
{Theorem1}, we get the following proposition.

\begin{proposition}
\label{Proposition2}The components of the curvature tensor $\widetilde{R}$
of the tensor bundle $T_{1}^{1}(M)$ with the rescaled Sasaki type metric%
\textit{\ ${}$}$^{S}g_{f}$ are given as follows:%
\begin{eqnarray}
&&{\widetilde{R}(E_{m},E_{l})E_{j}}  \label{B3.15} \\
&{=}&\left\{ {R_{mlj}^{\text{ \ \ \ }r}}\right.  \notag \\
&&{+\frac{1}{4f}(g_{ka}R_{.}^{s}{}_{.}^{h}{}_{m}^{\text{ \ }%
r}R_{ljh}{}^{p}-g_{ka}R_{.}^{s}{}_{.}^{h}{}_{l}^{\text{ }r}R_{mjh}^{\text{ \
\ \ }p}-2g_{ka}R_{.}^{s}{}_{.}^{h}{}_{j}^{\text{ }r}R_{mlh}^{\text{ \ \ \ }%
p})t_{s}^{a}t_{p}^{k}}  \notag \\
&&{+\frac{1}{4f}(g_{ka}R_{.}^{s}{}_{.}^{h}{}_{l}^{\text{ }r}R_{mjp}^{\text{
\ \ \ }k}-g_{ka}R_{.}^{s}{}_{.}^{h}{}_{m}^{\text{ \ }r}R_{ljp}^{\text{ \ \ }%
k}+2g_{ka}R_{.}^{s}{}_{.}^{h}{}_{j}^{\text{ \ }r}R_{mlp}^{\text{ \ \ \ }%
k})t_{s}^{a}t_{h}^{p}}  \notag \\
&&{+\frac{1}{4f}(g^{hb}R_{kpl}^{\text{ \ \ \ }r}R_{mjh}^{\text{ \ \ \ }%
s}-g^{hb}R_{kpm}^{\text{ \ \ \ }r}R_{ljh}^{\text{ \ \ \ }s}+2g^{hb}R_{kpj}^{%
\text{ \ \ \ }r}R_{mlh}^{\text{ \ \ \ }s})t_{b}^{p}t_{s}^{k}}  \notag \\
&&{+\frac{1}{4f}(g^{hb}R_{ksm}^{\text{ \ \ \ \ }r}R_{ljp}^{\text{ \ \ \ }%
k}-g^{hb}R_{ksl}^{\text{ \ \ \ }r}R_{mjp}^{\text{ \ \ \ }k}-2g^{hb}R_{ksj}^{%
\text{ \ \ }r}R_{mlp}^{\text{ \ \ \ }k})t_{b}^{s}t_{h}^{p}}  \notag \\
&&{+\nabla _{m}(\frac{1}{2f}A_{lj}^{r})-\nabla _{l}(\frac{1}{2f}A_{mj}^{r})+%
\frac{1}{4f^{2}}A_{ms}^{r}A_{lj}^{s}-\frac{1}{4f^{2}}A_{ls}^{r}A_{mj}^{s}%
\}E_{r}}  \notag \\
&&{+\{\frac{1}{2}(\nabla _{m}R_{ljr}^{\text{ \ \ \ }s}-\nabla _{l}R_{mjr}^{%
\text{ \ \ \ \ }s})t_{s}^{v}+\frac{1}{2}(\nabla _{l}R_{mjs}^{\text{ \ \ \ }%
v}-\nabla _{m}R_{ljs}^{\text{ \ \ }v})t_{r}^{s}}  \notag \\
&&{+\frac{1}{4f}((R_{mhr}^{\text{ \ \ \ }s}t_{s}^{v}-R_{mhs}^{\text{ \ \ \ }%
v}t_{r}^{s})A_{lj}^{h}-(R_{lhr}^{\text{ \ \ \ }s}t_{s}^{v}-R_{lhs}^{\text{ \
\ \ }v}t_{r}^{s})A_{mj}^{h})\}E_{\overline{r}},}  \notag
\end{eqnarray}%
\begin{eqnarray*}
{\widetilde{R}(E_{\overline{m}},E_{l})E_{j}} &{=}&\left\{ {-\frac{1}{2f}%
g_{na}(\nabla _{l}R_{.}^{s}{}_{.}^{m}{}_{j}^{\text{ }r})t_{s}^{a}+\frac{1}{2f%
}g^{mb}(\nabla _{l}R_{nsj}^{\text{ \ \ \ }r})t_{b}^{s}}\right. \\
&&{+\frac{1}{4f^{2}}(g_{na}R_{.}^{s}{}_{.}^{m}{}_{h}^{\text{ }%
r}A_{lj}^{h}t_{s}^{a}-g^{mb}R_{nsh}^{\text{ \ \ \ }%
r}A_{lj}^{h}t_{b}^{s}+g^{mb}R_{nsj}^{\text{ \ \ \ }h}A_{lh}^{r}t_{b}^{s}} \\
&&\left. {-g_{na}R_{.}^{s}{}_{.}^{m}{}_{j}^{\text{ }%
h}A_{lh}^{r}t_{s}^{a}+2f_{l}g_{na}R_{.}^{s}{}_{.}^{m}{}_{j}^{\text{ }%
r}t_{s}^{a}-2f_{l}g^{mb}R_{nsj}^{\text{ \ \ \ }r}t_{b}^{s})}\right\} {E_{r}}
\\
&&{+\{\frac{1}{2}R_{ljr}^{\text{ \ \ \ }m}\delta _{n}^{v}-\frac{1}{2}%
R_{ljn}^{\text{ \ \ \ }v}\delta _{r}^{m}-\frac{1}{4f}(R_{lhr}^{\text{ \ \ \ }%
s}g_{na}R_{.}^{p}{}_{.}^{m}{}_{j}^{\text{ }h})t_{s}^{v}t_{p}^{a}} \\
&&{+\frac{1}{4f}(R_{lhr}^{\text{ \ \ \ }s}g^{mb}R_{npj}^{\text{ \ \ \ }%
h})t_{s}^{v}t_{b}^{p}+\frac{1}{4f}(R_{lhs}^{\text{ \ \ \ }%
v}g_{na}R_{.}^{p}{}_{.}^{m}{}_{j}^{\text{ }h})t_{r}^{s}t_{p}^{a}} \\
&&{-\frac{1}{4f}(R_{lhs}^{\text{ \ \ \ }v}g^{mb}R_{npj}^{\text{ \ \ \ }%
h})t_{r}^{s}t_{b}^{p}\}E_{\overline{r}},} \\
{\widetilde{R}(E_{m},E_{\overline{l}})E_{j}} &{=}&\left\{ {\frac{1}{2f}%
g_{ta}(\nabla _{m}R_{.}^{s}{}_{.}^{l}{}_{j}^{\text{ }r})t_{s}^{a}-\frac{1}{2f%
}g^{lb}(\nabla _{m}R_{tsj}^{\text{ \ \ \ }r})t_{b}^{s}}\right. \\
&&{+\frac{1}{4f^{2}}(g_{ta}R_{.}^{s}{}_{.}^{l}{}_{j}^{\text{ }%
h}A_{mh}^{r}t_{s}^{a}-g^{lb}R_{tsj}^{\text{ \ \ \ }%
h}A_{mh}^{r}t_{b}^{s}+g^{lb}R_{tsh}^{\text{ \ \ \ }r}A_{mj}^{h}t_{b}^{s}} \\
&&{-g_{ta}R_{.}^{s}{}_{.}^{l}{}_{h}^{\text{ }%
r}A_{mj}^{h}t_{s}^{a}-2f_{m}g_{ta}R_{.}^{s}{}_{.}^{l}{}_{j}^{\text{ }%
h}t_{s}^{a}+2f_{m}g^{lb}R_{tsj}^{\text{ \ \ \ }r}t_{b}^{s})\}E_{r}} \\
&&{+\{-\frac{1}{2}R_{mjr}^{\text{ \ \ \ }l}\delta _{t}^{v}+\frac{1}{2}%
R_{mjt}^{\text{ \ \ \ }v}\delta _{r}^{l}+\frac{1}{4f}(R_{mhr}^{\text{ \ \ \ }%
s}g_{va}R_{.}^{p}{}_{.}^{l}{}_{j}^{\text{ }h})t_{s}^{v}t_{p}^{a}} \\
&&{-\frac{1}{4f}(R_{mhr}^{\text{ \ \ \ }s}g^{lb}R_{tpj}^{\text{ \ \ \ }%
h})t_{s}^{v}t_{b}^{p}-\frac{1}{4f}(R_{mhp}^{\text{ \ \ \ }%
v}g_{ta}R_{.}^{s}{}_{.}^{l}{}_{j}^{\text{ }h})t_{r}^{p}t_{s}^{a}} \\
&&{+\frac{1}{4f}(R_{mhs}^{\text{ \ \ \ }v}g^{lb}R_{tpj}^{\text{ \ \ \ }%
h})t_{r}^{s}t_{b}^{p}\}E_{\overline{r}},} \\
{\widetilde{R}(E_{\overline{m}},E_{\overline{l}})E_{j}} &{=}&\left\{ {\frac{1%
}{f}g_{tn}R_{.}^{m}{}_{.}^{l}{}_{j}^{\text{ }r}-\frac{1}{f}g^{lm}R_{tnj}^{%
\text{ \ \ \ }r}}\right. \\
&&{+\frac{1}{4f^{2}}(g_{na}R_{.}^{s}{}_{.}^{m}{}_{h}^{\text{ }%
r}g_{tb}R_{.}^{p}{}_{.}^{l}{}_{j}^{\text{ }%
h}-g_{ta}R_{.}^{s}{}_{.}^{l}{}_{h}^{\text{ }%
r}g_{nb}R_{.}^{p}{}_{.}^{m}{}_{j}^{\text{ }h})t_{s}^{a}t_{p}^{b}} \\
&&{+\frac{1}{4f^{2}}(g_{ta}R_{.}^{s}{}_{.h}^{l\text{ }}{}^{r}g^{mb}R_{npj}^{%
\text{ \ \ \ }h}-g_{na}R_{.}^{s}{}_{.}^{m}{}_{h}^{\text{ }r}g^{lb}R_{tpj}^{%
\text{ \ \ \ }h})t_{s}^{a}t_{b}^{p}} \\
&&{+\frac{1}{4f^{2}}(g^{lb}R_{tph}^{\text{ \ \ \ }%
r}g_{na}R_{.}^{s}{}_{.}^{m}{}_{j}^{\text{ }h}-g^{mb}R_{nph}^{\text{ \ \ \ }%
r}g_{ta}R_{.}^{s}{}_{.}^{l}{}_{j}^{\text{ }h})t_{b}^{p}t_{s}^{a}} \\
&&{+\frac{1}{4f^{2}}(g^{ma}R_{nsh}^{\text{ \ \ \ }r}g^{lb}R_{tsj}^{\text{ \
\ \ }h}-g^{la}R_{tsh}^{\text{ \ \ \ }r}g^{mb}R_{npj}^{\text{ \ \ \ }%
h})t_{a}^{s}t_{b}^{p}\}E_{r},} \\
{\widetilde{R}(E_{m},E_{l})E_{\overline{j}}} &{=}&\left\{ {\frac{1}{2f}%
g_{ia}(\nabla _{m}R_{.}^{s}{}_{.}^{j}{}_{l}^{\text{ }r}-\nabla
_{l}R_{.}^{s}{}_{.}^{j}{}_{m}^{\text{ }r})t_{s}^{a}+\frac{1}{2f}%
g^{jb}(\nabla _{l}R_{ism}^{\text{ \ \ \ }r}-\nabla _{m}R_{isl}^{\text{ \ \ \ 
}r})t_{b}^{s}}\right. \\
&&{+\frac{1}{4f^{2}}(g_{ia}R_{.}^{s}{}_{.}^{j}{}_{l}^{\text{ }%
h}A_{mh}^{r}t_{s}^{a}-g^{jb}R_{isl}^{\text{ \ \ \ }%
h}A_{mh}^{r}t_{b}^{s}+g^{jb}R_{ism}^{\text{ \ \ \ }%
h}A_{lh}^{r}t_{b}^{s}-g_{ia}R_{.}^{s}{}_{.}^{j}{}_{m}^{\text{ }%
h}A_{lh}^{r}t_{s}^{a}} \\
&&{-2(f_{m}g_{ia}R_{.}^{s}{}_{.}^{j}{}_{l}^{\text{ }%
h}t_{s}^{a}+f_{m}g^{jb}R_{isl}^{\text{ \ \ \ }%
r}t_{b}^{s}+f_{l}g_{ia}R_{.}^{s}{}_{.}^{j}{}_{m}^{\text{ }%
r}t_{s}^{a}-f_{l}g^{jb}R_{ism}^{\text{ \ \ \ }r}t_{b}^{s})\}E_{r}} \\
&&{+\{R_{mli}^{\text{ \ \ \ }v}\delta _{r}^{j}-R_{mlr}^{\text{ \ \ \ }%
j}\delta _{i}^{v}+\frac{1}{4f}(R_{mhr}^{\text{ \ \ \ }%
s}g_{ia}R_{.}^{p}{}_{.}^{j}{}_{l}^{\text{ }h}-R_{lhr}^{\text{ \ \ \ }%
s}g_{ia}R_{.}^{p}{}_{.}^{j}{}_{m}^{\text{ }h})t_{s}^{v}t_{p}^{a}} \\
&&{+\frac{1}{4f}(R_{lhr}^{\text{ \ \ \ }s}g^{jb}R_{lpm}^{\text{ \ \ \ }%
h}-R_{mhr}^{\text{ \ \ \ }s}g^{jb}R_{ipl}^{\text{ \ \ \ }%
h})t_{s}^{v}t_{b}^{p}+\frac{1}{4f}(R_{lhp}^{\text{ \ \ \ }%
v}g_{ia}R_{.}^{s}{}_{.}^{j}{}_{m}^{\text{ }h}} \\
&&{-R_{mhp}^{\text{ \ \ \ }v}g_{ia}R_{.}^{s}{}_{.}^{j}{}_{l}^{\text{ }%
h})t_{r}^{p}t_{s}^{a}+\frac{1}{4f}(R_{mhs}^{\text{ \ \ \ }v}g^{jb}R_{ipl}^{%
\text{ \ \ \ }h}-R_{lhs}^{\text{ \ \ \ }v}g^{jb}R_{ipm}^{\text{ \ \ \ }%
h})t_{r}^{s}t_{b}^{p}\}E_{\overline{r}},} \\
{\widetilde{R}(E_{m},E_{\overline{l}})E_{\overline{j}}} &{=}&\left\{ {-\frac{%
1}{2f}g_{it}R_{.}^{l}{}_{.}^{j}{}_{m}^{\text{ }r}+\frac{1}{2f}g^{jl}R_{itm}^{%
\text{ \ \ \ }r}-\frac{1}{4f^{2}}(g_{ta}R_{.}^{s}{}_{.}^{l}{}_{h}^{\text{ }%
r}g_{ib}R_{.}^{p}{}_{.}^{l}{}_{m}^{\text{ }h})t_{s}^{a}t_{p}^{b}}\right. \\
&&{+\frac{1}{4f^{2}}(g_{ta}R_{.}^{s}{}_{.}^{l}{}_{h}^{\text{ }%
r}g^{jb}R_{ipm}^{\text{ \ \ \ }h})t_{s}^{a}t_{b}^{p}+\frac{1}{4f^{2}}%
(g^{lb}R_{tph}^{\text{ \ \ \ }r}g_{ia}R_{.}^{s}{}_{.}^{j}{}_{m}^{\text{ }%
h})t_{b}^{p}t_{s}^{a}} \\
&&{-\frac{1}{4f^{2}}(g^{la}R_{tsh}^{\text{ \ \ \ }r}g^{jb}R_{ipm}^{\text{ \
\ \ }h})t_{a}^{s}t_{b}^{p}\}E_{r},}
\end{eqnarray*}

\begin{eqnarray*}
{\widetilde{R}(E_{\overline{m}},E_{l})E_{\overline{j}}} &{=}&\left\{ {\frac{1%
}{2f}g_{in}R_{.}^{m}{}_{.}^{j}{}_{l}^{\text{ }r}-\frac{1}{2f}g^{jm}R_{inl}^{%
\text{ \ \ \ }r}+\frac{1}{4f^{2}}(g_{na}R_{.}^{s}{}_{.}^{m}{}_{h}^{\text{ }%
r}g_{ib}R_{.}^{p}{}_{.}^{j}{}_{l}^{\text{ }h})t_{s}^{a}t_{p}^{b}}\right. \\
&&{-\frac{1}{4f^{2}}(g_{na}R_{.}^{s}{}_{.}^{m}{}_{h}^{\text{ }%
r}g^{jb}R_{ipl}^{\text{ \ \ \ }h})t_{s}^{a}t_{b}^{p}-\frac{1}{4f^{2}}%
(g^{mb}R_{nph}^{\text{ \ \ \ }r}g_{ia}R_{.}^{s}{}_{.}^{j}{}_{m}^{\text{ }%
h})t_{b}^{p}t_{s}^{a}} \\
&&{+\frac{1}{4f^{2}}(g^{ma}R_{nsh}^{\text{ \ \ \ }r}g^{jb}R_{ipl}^{\text{ \
\ \ }h})t_{a}^{s}t_{b}^{p}\}E_{r}} \\
{\widetilde{R}(E_{\overline{m}},E_{\overline{l}})E_{\overline{j}}} &{=}&{0}
\end{eqnarray*}%
with respect to the adapted frame $\left\{ E_{\alpha }\right\} $ (for $f=1,$
see \cite{Salimov3})$.$
\end{proposition}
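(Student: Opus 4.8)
The plan is to compute everything directly from the defining formula
$\widetilde{R}(\widetilde{X},\widetilde{Y})\widetilde{Z}=\widetilde{\nabla}_{\widetilde{X}}\widetilde{\nabla}_{\widetilde{Y}}\widetilde{Z}-\widetilde{\nabla}_{\widetilde{Y}}\widetilde{\nabla}_{\widetilde{X}}\widetilde{Z}-\widetilde{\nabla}_{[\widetilde{X},\widetilde{Y}]}\widetilde{Z}$, evaluating it on every admissible triple of adapted frame fields, with the inner and outer covariant derivatives supplied by Theorem \ref{Theorem1} (formula (\ref{B3.6})) and the bracket term supplied by Lemma \ref{Lemma1}. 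By the skew-symmetry of $\widetilde{R}$ in its first two slots it is enough to handle the four pairs $(E_{m},E_{l})$, $(E_{\overline{m}},E_{l})$, $(E_{m},E_{\overline{l}})$, $(E_{\overline{m}},E_{\overline{l}})$, each applied to $E_{j}$ and to $E_{\overline{j}}$; this yields exactly the eight displayed expressions. The last one, $\widetilde{R}(E_{\overline{m}},E_{\overline{l}})E_{\overline{j}}$, is immediately $0$ because $\widetilde{\nabla}_{E_{\overline{\bullet}}}E_{\overline{\bullet}}=0$ by Theorem \ref{Theorem1}(iv) and $[E_{\overline{m}},E_{\overline{l}}]=0$ by Lemma \ref{Lemma1}.

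The one genuinely delicate point is that the connection coefficients $\widetilde{\Gamma}^{\alpha}_{\beta\gamma}$ read off from (\ref{B3.6}) are \emph{not} constant along the fibres: they contain the fibre coordinates $t^{i}_{j}$ explicitly (through factors such as $R^{\ \ \ s}_{ljr}t^{v}_{s}$, $\tfrac{1}{2f}g_{ia}R^{\ s}_{.\ .}{}^{j}{}_{l}{}^{r}t^{a}_{s}$, and so on), and they contain $f$ and the base Christoffel symbols. Hence when the outer $E_{m}$ or $E_{\overline{m}}$ is applied to the first covariant derivative one must differentiate these coefficients, which requires the action of the adapted frame on the fibre coordinates, namely $E_{l}(t^{k}_{h})=-t^{s}_{h}\Gamma^{k}_{ls}+t^{k}_{s}\Gamma^{s}_{lh}$ coming from the local form of $^{H}X_{(l)}$, and the analogous Kronecker-delta derivatives $E_{\overline{l}}(t^{k}_{h})$ coming from $^{V}A^{(\overline{l})}$. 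After this differentiation the terms $\partial_{m}f$ and $\partial_{m}\Gamma$ recombine with the $\widetilde{\Gamma}\cdot\widetilde{\Gamma}$ products into honest base covariant derivatives $\nabla_{m}\!\big(\tfrac{1}{2f}\,{}^{f}A^{r}_{lj}\big)$ and $\nabla_{m}R$; keeping track of these recombinations is the bulk of the work.

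Once everything is expanded one is left, for each component, with a base-curvature piece (coming from the $^{H}[X,Y]$ term of Lemma \ref{Lemma1}, which gives the $R^{\ \ \ r}_{mlj}$ summand), terms linear in $\nabla R$, and a large sum of terms quadratic in $R$ and in $t$. The first Bianchi identity together with the pair symmetries of the Riemann tensor is then used repeatedly to fuse the $RR$-terms into the grouped coefficients shown in the statement — this is precisely where the coefficients $\tfrac14$, the factor $2$ in front of the $j$-indexed summands, and the mixed $g_{ka}\cdots g^{hb}$ pairings come from — while the second Bianchi identity is what closes up the $\nabla_{m}R-\nabla_{l}R$ combinations. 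The computation is purely mechanical, and I would simply organise it block by block and record the result, exactly as the authors do.

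The main obstacle is therefore not conceptual but the sheer volume of bookkeeping and the attendant risk of sign and index errors, especially in the $t$-dependent pieces where one must carefully distinguish the raised-index curvature $R^{\ s}_{.\ .}{}^{l}{}_{j}{}^{r}$ from $R^{\ \ \ r}_{lsj}$ and track the \emph{two} fibre indices carried by each vertical frame field. Two natural sanity checks are available: setting $f=1$ must reproduce the curvature formulas of \cite{Salimov3}, and every displayed term must be separately tensorial in the base indices.
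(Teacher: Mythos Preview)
Your approach is exactly the paper's: the authors state only that the curvature is obtained from the standard formula $\widetilde{R}(\widetilde{X},\widetilde{Y})\widetilde{Z}=\widetilde{\nabla}_{\widetilde{X}}\widetilde{\nabla}_{\widetilde{Y}}\widetilde{Z}-\widetilde{\nabla}_{\widetilde{Y}}\widetilde{\nabla}_{\widetilde{X}}\widetilde{Z}-\widetilde{\nabla}_{[\widetilde{X},\widetilde{Y}]}\widetilde{Z}$ and that ``from Lemma~\ref{Lemma1} and Theorem~\ref{Theorem1}, we get the following proposition'', so your block-by-block evaluation on the adapted frame is precisely what they intend. One small overstatement: the Bianchi identities play a lesser role than you suggest --- the $\nabla_{m}R-\nabla_{l}R$ combinations are \emph{not} closed up (they appear verbatim in the displayed formulas), and the quadratic $RR$-terms are merely collected by index pattern rather than fused via the first Bianchi identity --- but this does not affect the correctness of your plan.
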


We now compare the geometries of the Riemannian manifold $(M,g)$ and its
tensor bundle $T_{1}^{1}(M)$ with the rescaled Sasaki type metric $%
{}^{S}g_{f}$ $.$

\begin{theorem}
Let $(M,g)$ be a Riemannian manifold and $T_{1}^{1}(M)$ be its tensor bundle
with the rescaled Sasaki type metric ${}^{S}g_{f}$. Then $T_{1}^{1}(M)$ is
locally flat if and only if $M$ is locally flat and ${\nabla _{m}(\frac{1}{2f%
}A_{lj}^{r})-\nabla _{l}(\frac{1}{2f}A_{mj}^{r})+\frac{1}{4f^{2}}%
A_{ms}^{r}A_{lj}^{s}-\frac{1}{4f^{2}}A_{ls}^{r}A_{mj}^{s}}=0{}.$
\end{theorem}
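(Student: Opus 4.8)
The plan is to read the statement directly off the explicit curvature formulas of Proposition~\ref{Proposition2}, interpreting ``$T_{1}^{1}(M)$ locally flat'' as $\widetilde{R}\equiv 0$ (the metric $^{S}g_{f}$ being pseudo-Riemannian, this is the appropriate notion). Since $\{E_{\alpha}\}$ is a local frame on $T_{1}^{1}(M)$, $\widetilde{R}=0$ is equivalent to the vanishing of all eight families of components listed in Proposition~\ref{Proposition2}, so the whole argument is an exercise in extracting the right consequences from those expressions.

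For the ``only if'' direction, assume $\widetilde{R}\equiv 0$. The key observation is that every component in Proposition~\ref{Proposition2} is a polynomial in the fibre coordinates $t_{j}^{i}$, so its restriction to the zero section $t_{j}^{i}=0$ must again vanish. Applied to $\widetilde{R}(E_{\overline{m}},E_{\overline{l}})E_{j}$, this annihilates every $t$-dependent term and leaves the $t$-free part $\tfrac{1}{f}\big(g_{tn}R_{.}{}^{m}{}_{.}{}^{l}{}_{j}{}^{r}-g^{lm}R_{tnj}{}^{r}\big)=0$, where $R_{.}{}^{s}{}_{.}{}^{l}{}_{j}{}^{r}=g^{as}g^{bl}R_{abj}{}^{r}$ as in Theorem~\ref{Theorem1}. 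Contracting this relation with $g_{lm}$, the first term becomes $g_{tn}g^{ab}R_{abj}{}^{r}=0$ by the skew-symmetry of $R$ in its first two indices, while the second becomes $n\,R_{tnj}{}^{r}$; hence $R\equiv 0$, i.e. $(M,g)$ is locally flat. Now substitute $R\equiv 0$ into the formula for $\widetilde{R}(E_{m},E_{l})E_{j}$: all terms containing $R$ or $\nabla R$ disappear and only the $E_{r}$-component survives, equal to $R_{mlj}{}^{r}$ together with $\nabla_{m}(\tfrac{1}{2f}A_{lj}^{r})-\nabla_{l}(\tfrac{1}{2f}A_{mj}^{r})+\tfrac{1}{4f^{2}}A_{ms}^{r}A_{lj}^{s}-\tfrac{1}{4f^{2}}A_{ls}^{r}A_{mj}^{s}$. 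Since $R_{mlj}{}^{r}=0$, the assumption $\widetilde{R}=0$ forces the remaining $A$-expression to vanish, which is precisely the asserted identity.

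For the ``if'' direction, suppose $(M,g)$ is locally flat (so $R\equiv 0$) and that the displayed identity holds. Inspecting the eight component families of Proposition~\ref{Proposition2}, once $R\equiv 0$ every term in each of them vanishes, with the sole possible exception of the $A$-expression $\nabla_{m}(\tfrac{1}{2f}A_{lj}^{r})-\nabla_{l}(\tfrac{1}{2f}A_{mj}^{r})+\tfrac{1}{4f^{2}}A_{ms}^{r}A_{lj}^{s}-\tfrac{1}{4f^{2}}A_{ls}^{r}A_{mj}^{s}$, which is the $R$-free remainder of the $E_{r}$-component of $\widetilde{R}(E_{m},E_{l})E_{j}$. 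By hypothesis this expression is zero, hence $\widetilde{R}\equiv 0$ and $T_{1}^{1}(M)$ is locally flat.

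The one genuinely non-routine step is the contraction that converts the zero-section relation $g_{tn}R_{.}{}^{m}{}_{.}{}^{l}{}_{j}{}^{r}=g^{lm}R_{tnj}{}^{r}$ into $R\equiv 0$; after that, everything reduces to matching terms against Proposition~\ref{Proposition2}. Accordingly I would present the proof in the order: restrict to the zero section, deduce flatness of $M$, then isolate the residual obstruction in $\widetilde{R}(E_{m},E_{l})E_{j}$ and identify it with the stated tensor identity.
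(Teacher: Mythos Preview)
Your proof is correct, and in the ``only if'' direction it actually takes a different route from the paper. The paper restricts only the component $\widetilde{R}(E_{m},E_{l})E_{j}$ to the zero section, obtains $R_{mlj}{}^{r}+\big[\nabla_{m}(\tfrac{1}{2f}A_{lj}^{r})-\nabla_{l}(\tfrac{1}{2f}A_{mj}^{r})+\tfrac{1}{4f^{2}}A_{ms}^{r}A_{lj}^{s}-\tfrac{1}{4f^{2}}A_{ls}^{r}A_{mj}^{s}\big]=0$, and then simply asserts that both summands vanish separately---a step that is not justified, since a priori the two pieces could cancel. You instead extract $R\equiv 0$ from a \emph{different} curvature component, $\widetilde{R}(E_{\overline{m}},E_{\overline{l}})E_{j}$, whose $t$-free part involves only $R$ and not the $A$-tensor; the contraction with $g_{lm}$ then cleanly forces $R=0$. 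Only after that do you return to $\widetilde{R}(E_{m},E_{l})E_{j}$ to isolate the $A$-expression. This separation of the two obstructions is what the paper's argument lacks, so your approach is not merely different but closes a genuine gap. The ``if'' direction in both proofs is the same routine inspection of Proposition~\ref{Proposition2}.
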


\begin{proof}
If 
\begin{equation*}
{}{\nabla _{m}(\frac{1}{2f}A_{lj}^{r})-\nabla _{l}(\frac{1}{2f}A_{mj}^{r})+%
\frac{1}{4f^{2}}A_{ms}^{r}A_{lj}^{s}-\frac{1}{4f^{2}}A_{ls}^{r}A_{mj}^{s}}=0
\end{equation*}%
in the equations (\ref{B3.15}), then $R\equiv 0$ implies ${\widetilde{R}}%
\equiv 0$. Conversely, if we assume ${\widetilde{R}}\equiv 0,$ then from the
first equation in (\ref{B3.15}) in the point $(x^{i},t_{i}^{j})=(x^{i},0)\in
T_{1}^{1}(M),$ we get%
\begin{eqnarray*}
&&({\,{}\,\widetilde{R}(E_{m},E_{l})E_{j})}_{(x^{i},0)} \\
&{=}&{[}\left\{ {R_{mlj}^{\text{ \ \ \ }r}}\right. \\
&&{+\frac{1}{4f}(g_{ka}R_{.}^{s}{}_{.}^{h}{}_{m}^{\text{ \ }r}R_{ljh}^{\text{
\ \ \ }p}-g_{ka}R_{.}^{s}{}_{.}^{h}{}_{l}^{\text{ }r}R_{mjh}^{\text{ \ \ \ }%
p}-2g_{ka}R_{.}^{s}{}_{.}^{h}{}_{j}^{\text{ }r}R_{mlh}^{\text{ \ \ \ }%
p})t_{s}^{a}t_{p}^{k}} \\
&&{+\frac{1}{4f}(g_{ka}R_{.}^{s}{}_{.}^{h}{}_{l}^{\text{ }r}R_{mjp}^{\text{
\ \ \ }k}-g_{ka}R_{.}^{s}{}_{.}^{h}{}_{m}^{\text{ \ }r}R_{ljp}^{\text{ \ \ \ 
}k}+2g_{ka}R_{.}^{s}{}_{.}^{h}{}_{j}^{\text{ \ }r}R_{mlp}^{\text{ \ \ \ }%
k})t_{s}^{a}t_{h}^{p}} \\
&&{+\frac{1}{4f}(g^{hb}R_{kpl}^{\text{ \ \ \ }r}R_{mjh}^{\text{ \ \ \ }%
s}-g^{hb}R_{kpm}^{\text{ \ \ \ }r}R_{ljh}^{\text{ \ \ \ }s}+2g^{hb}R_{kpj}^{%
\text{ \ \ \ }r}R_{mlh}^{\text{ \ \ \ }s})t_{b}^{p}t_{s}^{k}} \\
&&{+\frac{1}{4f}(g^{hb}R_{ksm}^{\text{ \ \ \ }r}R_{ljp}^{\text{ \ \ \ }%
k}-g^{hb}R_{ksl}^{\text{ \ \ \ }r}R_{mjp}^{\text{ \ \ \ }k}-2g^{hb}R_{ksj}^{%
\text{ \ \ \ }r}R_{mlp}^{\text{ \ \ \ }k})t_{b}^{s}t_{h}^{p}} \\
&&{+\nabla _{m}(\frac{1}{2f}A_{lj}^{r})-\nabla _{l}(\frac{1}{2f}A_{mj}^{r})+%
\frac{1}{4f^{2}}A_{ms}^{r}A_{lj}^{s}-\frac{1}{4f^{2}}A_{ls}^{r}A_{mj}^{s}%
\}E_{r}} \\
&&{+\{\frac{1}{2}(\nabla _{m}R_{ljr}^{\text{ \ \ \ }s}-\nabla _{l}R_{mjr}^{%
\text{ \ \ \ }s})t_{s}^{v}+\frac{1}{2}(\nabla _{l}R_{mjs}^{\text{ \ \ \ }%
v}-\nabla _{m}R_{ljs}^{\text{ \ \ }v})t_{r}^{s}} \\
&&{+\frac{1}{4f}((R_{mhr}^{\text{ \ \ \ }s}t_{s}^{v}-R_{mhs}^{\text{ \ \ \ }%
v}t_{r}^{s})A_{lj}^{h}-(R_{lhr}^{\text{ \ \ \ }s}t_{s}^{v}-R_{lhs}^{\text{ \
\ \ }v}t_{r}^{s})A_{mj}^{h})\}E_{\overline{r}}]}_{(x^{i},0)} \\
&=&{R_{mlj}^{\text{ \ \ \ }r}+\nabla _{m}(\frac{1}{2f}A_{lj}^{r})-\nabla
_{l}(\frac{1}{2f}A_{mj}^{r})+\frac{1}{4f^{2}}A_{ms}^{r}A_{lj}^{s}-\frac{1}{%
4f^{2}}A_{ls}^{r}A_{mj}^{s}} \\
&=&0
\end{eqnarray*}%
then $R\equiv 0$ and ${}{\nabla _{m}(\frac{1}{2f}A_{lj}^{r})-\nabla _{l}(%
\frac{1}{2f}A_{mj}^{r})+\frac{1}{4f^{2}}A_{ms}^{r}A_{lj}^{s}-\frac{1}{4f^{2}}%
A_{ls}^{r}A_{mj}^{s}}=0$. This completes the proof.
\end{proof}

\begin{corollary}
Let $(M,g)$ be a Riemannian manifold and $T_{1}^{1}(M)$ be its tensor bundle
with the rescaled Sasaki type metric ${}^{S}g_{f}$. Suppose that $%
f=C(const.) $, then $(T_{1}^{1}(M),^{S}g_{f})$ is locally flat if and only
if M is locally flat.
\end{corollary}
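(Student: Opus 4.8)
The plan is to read this off directly from the preceding theorem. The only point to check is that the auxiliary condition appearing there,
\begin{equation*}
\nabla _{m}\Bigl(\tfrac{1}{2f}A_{lj}^{r}\Bigr)-\nabla _{l}\Bigl(\tfrac{1}{2f}A_{mj}^{r}\Bigr)+\tfrac{1}{4f^{2}}A_{ms}^{r}A_{lj}^{s}-\tfrac{1}{4f^{2}}A_{ls}^{r}A_{mj}^{s}=0,
\end{equation*}
is automatically satisfied when $f=C$ is constant. Recall from Theorem \ref{Theorem1} that ${}^{f}A_{ji}^{h}=f_{j}\delta _{i}^{h}+f_{i}\delta _{j}^{h}-f_{.}^{h}g_{ji}$ with $f_{i}=\partial _{i}f$. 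First I would observe that $f=C$ forces $f_{i}=\partial _{i}f\equiv 0$, hence ${}^{f}A_{ji}^{h}\equiv 0$ identically on $M$. Consequently every term in the displayed expression vanishes: the two quadratic terms $\tfrac{1}{4f^{2}}A_{ms}^{r}A_{lj}^{s}$ and $\tfrac{1}{4f^{2}}A_{ls}^{r}A_{mj}^{s}$ vanish because $A\equiv 0$, and the two derivative terms $\nabla _{m}(\tfrac{1}{2f}A_{lj}^{r})$ and $\nabla _{l}(\tfrac{1}{2f}A_{mj}^{r})$ vanish because $\tfrac{1}{2f}A_{lj}^{r}=\tfrac{1}{2C}\cdot 0=0$ is the zero tensor field, whose covariant derivative is zero.

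Having verified this, I would then simply invoke the previous theorem, which states that $T_{1}^{1}(M)$ with ${}^{S}g_{f}$ is locally flat if and only if $M$ is locally flat and the above auxiliary expression vanishes. Since for $f=C$ the second condition holds trivially, the criterion reduces to: $(T_{1}^{1}(M),{}^{S}g_{f})$ is locally flat $\iff$ $M$ is locally flat. This completes the argument in both directions.

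There is essentially no obstacle here: the corollary is a direct specialization, and the substantive content (the curvature computation in Proposition \ref{Proposition2} and the equivalence in the preceding theorem) is already in hand. If one wanted to be fully self-contained rather than citing the theorem, one could instead substitute $f=C$ and $f_{i}=0$ directly into the expression for $\widetilde{R}(E_{m},E_{l})E_{j}$ in Proposition \ref{Proposition2}, note that all $A$-dependent terms drop out, and repeat the evaluation at the zero section $(x^{i},t_{i}^{j})=(x^{i},0)$ exactly as in the proof of the preceding theorem to conclude $R\equiv 0$; but citing the theorem is cleaner.
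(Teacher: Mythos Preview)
Your proposal is correct and is exactly the intended argument: the paper states this as an immediate corollary of the preceding theorem, and the point is precisely that $f=C$ gives $f_i=0$, hence ${}^{f}A\equiv 0$, so the auxiliary $A$-condition in that theorem is trivially satisfied and the flatness criterion collapses to $R\equiv 0$.
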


\bigskip We now turn our attention to the Ricci tensor and scalar curvature
of the rescaled Sasaki type metric ${}^{S}g_{f}$. Let $\widetilde{R}_{\alpha
\beta }=$ $\widetilde{R}_{\sigma \alpha \beta }^{\text{ \ \ \ \ \ \ }\sigma
} $ and $\widetilde{r}=(^{S}g_{f})^{\alpha \beta }$ $\widetilde{R}_{\alpha
\beta }$ denote the Ricci tensor and scalar curvature of the rescaled Sasaki
type metric ${}^{S}g_{f}$, respectively. From (\ref{B3.15}), the components
of the Ricci tensor $\widetilde{R}_{\alpha \beta }$ are characterized by%
\begin{eqnarray}
\widetilde{{R}}{_{\overline{l}\overline{j}}} &{=}&  \label{B3.16} \\
&&{-\frac{1}{4f^{2}}(g_{ta}R_{.}^{s}{}_{.}^{l}{}_{h}^{\text{ }%
r}g_{ib}R_{.}^{p}{}_{.}^{j}{}_{r}^{\text{ }h})t_{s}^{a}t_{p}^{b}+\frac{1}{%
4f^{2}}(g_{ta}R_{.}^{s}{}_{.}^{l}{}_{h}^{\text{ }r}g^{jb}R_{ipr}^{\text{ \ \
\ }h})t_{s}^{a}t_{b}^{p}}  \notag \\
&&{+\frac{1}{4f^{2}}(g^{lb}R_{tph}^{\text{ \ \ \ }%
r}g_{ia}R_{.}^{s}{}_{.}^{j}{}_{r}^{\text{ }h})t_{b}^{p}t_{s}^{a}-\frac{1}{%
4f^{2}}(g^{lb}R_{tsh}^{\text{ \ \ \ }r}g^{ja}R_{ipr}^{\text{ \ \ \ }%
h})t_{b}^{s}t_{a}^{p},}  \notag \\
\widetilde{{R}}{_{\overline{l}j}} &{=}&  \notag \\
&&{\frac{1}{2f}g_{ta}(\nabla _{r}R_{.}^{s}{}_{.}^{l}{}_{j}^{\text{ }%
r})t_{s}^{a}-\frac{1}{2f}g^{lb}(\nabla _{r}R_{tsj}^{\text{ \ \ \ }%
r})t_{b}^{s}}  \notag \\
&&{+\frac{1}{4f^{2}}(g_{ta}R_{.}^{s}{}_{.}^{l}{}_{j}^{\text{ }%
h}A_{rh}^{r}t_{s}^{a}-g^{lb}R_{tsj}^{\text{ \ \ \ }%
h}A_{rh}^{r}t_{b}^{s}+g^{lb}R_{tsh}^{\text{ \ \ \ }r}A_{rj}^{h}t_{b}^{s}} 
\notag \\
&&{-g_{ta}R_{.}^{s}{}_{.}^{l}{}_{h}^{\text{ }%
r}A_{rj}^{h}t_{s}^{a}-2f_{r}g_{ta}R_{.}^{s}{}_{.}^{l}{}_{j}^{\text{ }%
h}t_{s}^{a}+2f_{r}g^{lb}R_{tsj}^{\text{ \ \ \ }r}t_{b}^{s}),}  \notag \\
\widetilde{{R}}{_{l\overline{j}}} &{=}&  \notag \\
&&{\frac{1}{2f}g_{ia}(\nabla _{r}R_{.}^{s}{}_{.}^{j}{}_{l}^{\text{ }%
r})t_{s}^{a}-\frac{1}{2f}g^{jb}(\nabla _{r}R_{isl}^{\text{ \ \ \ }%
r})t_{b}^{s}}  \notag \\
&&{+\frac{1}{4f^{2}}(g_{ia}R_{.}^{s}{}_{.}^{j}{}_{l}^{\text{ }%
h}A_{rh}^{r}t_{s}^{a}-g^{jb}R_{isl}^{\text{ \ \ \ }h}A_{rh}^{r}t_{b}^{s}} 
\notag \\
&&{+g^{jb}R_{isr}^{\text{ \ \ \ }%
h}A_{lh}^{r}t_{b}^{s}-g_{ia}R_{.}^{s}{}_{.}^{j}{}_{r}^{\text{ }%
h}A_{lh}^{r}t_{s}^{a}-2f_{r}g_{ia}R_{.}^{s}{}_{.}^{j}{}_{l}^{\text{ }%
h}t_{s}^{a}}  \notag \\
&&{+2f_{r}g^{jb}R_{isl}^{\text{ \ \ \ }r}t_{b}^{s}),}  \notag \\
\widetilde{{R}}{_{lj}} &{=}&  \notag \\
&&{R_{l}{}_{j}-\frac{1}{4f}(g_{ka}R_{.}^{s}{}_{.}^{h}{}_{l}^{\text{ }%
r}R_{rjh}^{\text{ \ \ \ }p})t_{s}^{a}t_{p}^{k}-\frac{1}{2f}%
(g_{ka}R_{.}^{s}{}_{.}^{h}{}_{j}^{\text{ }r}R_{rlh}^{\text{ \ \ \ }%
p})t_{s}^{a}t_{p}^{k}}  \notag \\
&&{-\frac{1}{4f}(R_{lhr}^{\text{ \ \ \ }s}g_{va}R_{.}^{p}{}_{.}^{r}{}_{j}^{%
\text{ }h})t_{s}^{v}t_{p}^{a}-\frac{1}{4f}(g^{hb}R_{ksl}^{\text{ \ \ \ }%
r}R_{rjp}^{\text{ \ \ \ }k})t_{b}^{s}t_{h}^{p}}  \notag \\
&&{-\frac{1}{2f}(g^{hb}R_{ksj}^{\text{ \ \ \ }r}R_{rlp}^{\text{ \ \ \ }%
k})t_{b}^{s}t_{h}^{p}-\frac{1}{4f}(R_{lhs}^{\text{ \ \ \ }v}g^{rb}R_{vpj}^{%
\text{ \ \ \ }h})t_{r}^{s}t_{b}^{p}}  \notag \\
&&{+\frac{1}{2f}(g_{ka}R_{.}^{s}{}_{.}^{h}{}_{j}^{\text{ }r}R_{rlp}^{\text{
\ \ \ }k})t_{s}^{a}t_{h}^{p}+\frac{1}{2f}(g^{hb}R_{kpj}^{\text{ \ \ \ }%
r}R_{rlh}^{\text{ \ \ \ }s})t_{b}^{p}t_{s}^{k}}  \notag \\
&&{+\nabla _{r}(\frac{1}{2f}A_{lj}^{r})-\nabla _{l}(\frac{1}{2f}A_{rj}^{r})+%
\frac{1}{4f^{2}}A_{rs}^{r}A_{lj}^{s}-\frac{1}{4f^{2}}A_{ls}^{r}A_{rj}^{s}.} 
\notag
\end{eqnarray}%
with respect to the adapted frame. From (\ref{B3.5}) and (\ref{B3.16}), the
scalar curvature of the rescaled Sasaki type metric ${}^{S}g_{f}$ is given by%
\begin{eqnarray*}
{}\widetilde{r} &=&\frac{1}{f}r-{\frac{1}{4f^{2}}}%
g^{ab}g^{hk}g^{lj}g^{ti}R_{slhv}R_{pjkr}t_{a}^{s}t_{b}^{p} \\
&&-{\frac{1}{4f^{2}}}g_{cd}g^{lj}g^{hk}g^{rv}R_{rlh}^{\text{ \ \ \ }%
s}\,R_{vjk}^{\text{ \ \ \ }p}t_{s}^{c}t_{p}^{d}+{\frac{1}{2f^{2}}}R_{cpr}^{%
\text{ \ \ \ }h}R_{h}\,_{.\;.\;}^{r\,b\;s}\,t_{s}^{c}t_{b}^{p} \\
&&+\frac{1}{f}g^{lj}({\nabla _{r}(\frac{1}{2f}A_{lj}^{r})-\nabla _{l}(\frac{1%
}{2f}A_{rj}^{r})+\frac{1}{4f^{2}}A_{rs}^{r}A_{lj}^{s}-\frac{1}{4f^{2}}%
A_{ls}^{r}A_{rj}^{s})}
\end{eqnarray*}%
Thus we have the result as follows.

\begin{theorem}
\label{Theorem5}Let $(M,g)$ be a Riemannian manifold and $T_{1}^{1}(M)$ be
its tensor bundle with the rescaled Sasaki type metric ${}^{S}g_{f}$. Let $r$
be the scalar curvature of $g$ and ${}\widetilde{{r}}$ be the scalar
curvature of ${}^{S}g_{f}$. Then the following equation holds:%
\begin{eqnarray*}
{}\widetilde{r} &=&\frac{1}{f}r-{\frac{1}{4f^{2}}}%
g^{ab}g^{hk}g^{lj}g^{ti}R_{slhv}R_{pjkr}t_{a}^{s}t_{b}^{p} \\
&&-{\frac{1}{4f^{2}}}g_{cd}g^{lj}g^{hk}g^{rv}R_{rlh}^{\text{ \ \ \ }%
s}\,R_{vjk}^{\text{ \ \ \ }p}t_{s}^{c}t_{p}^{d}+{\frac{1}{2f^{2}}}R_{cpr}^{%
\text{ \ \ \ }h}R_{h}\,_{.\;.\;}^{r\,b\;s}\,t_{s}^{c}t_{b}^{p}{+}\text{ }^{f}%
{L.}
\end{eqnarray*}

where%
\begin{equation*}
^{f}L=\frac{1}{f}g^{lj}({\nabla _{r}(\frac{1}{2f}A_{lj}^{r})-\nabla _{l}(%
\frac{1}{2f}A_{rj}^{r})+\frac{1}{4f^{2}}A_{rs}^{r}A_{lj}^{s}-\frac{1}{4f^{2}}%
A_{ls}^{r}A_{rj}^{s}).}
\end{equation*}
\end{theorem}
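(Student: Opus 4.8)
The plan is to obtain $\widetilde{r}$ simply by tracing the Ricci tensor, using the definition $\widetilde{r}=({}^{S}g_{f})^{\alpha \beta }\widetilde{R}_{\alpha \beta }$ together with the block structure of the inverse metric recorded in (\ref{B3.5}). Since the mixed blocks $({}^{S}g_{f})^{l\overline{j}}$ and $({}^{S}g_{f})^{\overline{l}j}$ vanish, only the purely horizontal and purely vertical traces survive, so that
\[
\widetilde{r}=\frac{1}{f}\,g^{lj}\widetilde{R}_{lj}+(g^{it}g_{lj})\,\widetilde{R}_{\overline{l}\overline{j}},
\]
and it is enough to evaluate these two contractions and simplify.

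First I would compute the components of the Ricci tensor $\widetilde{R}_{\alpha \beta }=\widetilde{R}_{\sigma \alpha \beta }{}^{\sigma }$ by contracting the curvature components listed in Proposition \ref{Proposition2}: for each pair $(\alpha ,\beta )$ one sums $\widetilde{R}_{r\alpha \beta }{}^{r}$ over the horizontal range of $\sigma $ and $\widetilde{R}_{\overline{r}\alpha \beta }{}^{\overline{r}}$ over the vertical range. Carrying this out for the relevant pairs and using Lemma \ref{Lemma1} and Theorem \ref{Theorem1} gives exactly the formulas (\ref{B3.16}); in particular $\widetilde{R}_{lj}$ picks up the Ricci tensor $R_{lj}$ of $g$, a collection of terms quadratic in $R$ and in the fibre coordinates $t$ coming from the $\frac{1}{4f}$ and $\frac{1}{4f^{2}}$ corrections in (\ref{B3.15}), and the ``divergence of $A$'' terms $\nabla _{r}(\frac{1}{2f}A_{lj}^{r})-\nabla _{l}(\frac{1}{2f}A_{rj}^{r})+\frac{1}{4f^{2}}A_{rs}^{r}A_{lj}^{s}-\frac{1}{4f^{2}}A_{ls}^{r}A_{rj}^{s}$.

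Then I would substitute (\ref{B3.16}) into the displayed expression for $\widetilde{r}$. The term $\frac{1}{f}g^{lj}R_{lj}$ produces $\frac{1}{f}r$; the $A$-terms, once multiplied by $\frac{1}{f}g^{lj}$, assemble into the quantity $^{f}L$; and all remaining horizontal contributions, together with the vertical trace $g^{it}g_{lj}\widetilde{R}_{\overline{l}\overline{j}}$, are quadratic in $R$ and in $t$. The final step is to gather these quadratic terms: after relabelling dummy indices and repeatedly using the skew-symmetries $R_{abcd}=-R_{bacd}=-R_{abdc}$, the pair symmetry $R_{abcd}=R_{cdab}$ and the first Bianchi identity, the various pieces coalesce into the three displayed traces with coefficients $-\frac{1}{4f^{2}}$, $-\frac{1}{4f^{2}}$ and $+\frac{1}{2f^{2}}$, which is the asserted identity.

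The routine but genuinely laborious part --- and essentially the only obstacle --- is this last bookkeeping of the curvature-quadratic terms: one must keep the index raising in the mixed-type symbols $R_{.}^{s}{}_{.}^{j}{}_{l}^{\text{ }r}=g^{as}g^{bj}R_{abl}^{\text{ \ \ \ \ }r}$ consistent through the vertical contraction, match the terms coming from the horizontal trace against those coming from the vertical trace, and verify that after symmetrisation the coefficients are exactly $-\frac{1}{4}$, $-\frac{1}{4}$, $+\frac{1}{2}$ (times $f^{-2}$). By contrast, the $\nabla A$ and $A\cdot A$ terms never interact with the curvature corrections, so they collect immediately into $^{f}L$.
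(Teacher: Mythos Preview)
Your proposal is correct and follows essentially the same route as the paper: the authors also derive the Ricci components (\ref{B3.16}) by contracting the curvature formulas of Proposition \ref{Proposition2}, and then obtain $\widetilde{r}$ by pairing with the block-diagonal inverse metric (\ref{B3.5}), exactly as you outline. The only difference is presentational: the paper simply records the outcome of the horizontal and vertical traces without spelling out the index bookkeeping or the curvature symmetries you mention, so your description of the ``laborious part'' is a faithful expansion of what the paper leaves implicit.
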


From the Theorem \ref{Theorem5}, we have the following conclusion.

\begin{corollary}
Let $(M,g)$ be a Riemannian manifold and $T_{1}^{1}(M)$ be its tensor bundle
with the rescaled Sasaki type metric $^{S}g_{f}$. Then

(i) ${}\widetilde{{r}}{=0}$ if and only if $R=0$ and $^{f}{L=0}$

(ii) If $^{f}{L\neq 0}$, then $\widetilde{{r}}{\neq 0}$
\end{corollary}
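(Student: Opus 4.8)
The plan is to read the whole statement off the scalar-curvature formula of Theorem~\ref{Theorem5}. Fix a point $x\in M$ and regard $\widetilde r$ as a function on the fibre $\pi^{-1}(x)$. Inspecting that formula, $\widetilde r$ is a polynomial of degree at most two in the fibre coordinates $t_i^j$ with \emph{no} linear term: its constant part is $\tfrac1f r(x)+{}^{f}L(x)$, while the three remaining summands form a homogeneous quadratic $Q_x$ in the $t_i^j$ whose coefficients are quadratic in the curvature components $R_{ijkl}(x)$. Consequently $\widetilde r=0$ on $T_1^1(M)$ is equivalent to the two conditions: $\tfrac1f r(x)+{}^{f}L(x)=0$ for every $x\in M$, and $Q_x\equiv0$ for every $x\in M$.

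The implication ``$R=0$ and ${}^{f}L=0$ $\Rightarrow$ $\widetilde r=0$'' is immediate: $R=0$ forces $r=0$ and makes every coefficient of $Q_x$ vanish, so Theorem~\ref{Theorem5} gives $\widetilde r={}^{f}L$, which is then $0$ as well. The same computation shows $R=0\Rightarrow\widetilde r={}^{f}L$ in general, so part (ii) will follow once the converse direction of (i) is established.

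For that converse, assume $\widetilde r\equiv0$, whence $Q_x\equiv0$ for every $x$; the task is to deduce $R(x)=0$. I would test $Q_x$ on rank-one tensors $t_i^j=\lambda^j\mu_i$. In the summand carrying the factor $R_{cpr}{}^{h}$ this curvature tensor is contracted, along its first (antisymmetric) pair of indices $c,p$, with the symmetric pair $\lambda^c\lambda^p$, so that summand drops out on rank-one tensors. In the other two summands the metric contractions collapse to the scalars $|\lambda|^2$ and $|\mu|^2$, leaving a sum of two terms, one proportional to $|\mu|^2$ times a full contraction of $R\otimes R$ against $\lambda\otimes\lambda$, the other proportional to $|\lambda|^2$ times a full contraction of $R\otimes R$ against $\mu\otimes\mu$ (up to the evident index corrections in the printed formula). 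Since $g$ is positive definite, each of these contractions is a sum of squares of curvature components, so the whole expression is $\le0$, and its vanishing for all $\lambda,\mu$ forces each term to vanish. Fixing $\lambda\neq0$ and letting $\mu$ range over a basis of covectors, the vanishing of the $|\lambda|^2$-term gives $R_{ijk}{}^{s}=0$, that is, $R(x)=0$; then $r=0$, so $\tfrac1f r+{}^{f}L=0$ yields ${}^{f}L=0$. This proves (i), and (ii) is its contrapositive: if ${}^{f}L\not\equiv0$ then $\widetilde r$ cannot vanish identically, hence $\widetilde r\neq0$.

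The only genuinely delicate step is the rank-one reduction: one has to keep track of which metric contractions in the (mildly typo-prone) formula of Theorem~\ref{Theorem5} survive, verify that the ``$+$'' summand really disappears by the antisymmetry of $R$ in its first two indices, and confirm that the two surviving pieces are — thanks to the positive-definiteness of $g$ — sums of squares, so that $Q_x\equiv0$ is equivalent to $R=0$. Everything else is routine polynomial bookkeeping in the $t_i^j$.
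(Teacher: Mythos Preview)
The paper does not actually prove this corollary: it is simply asserted with the sentence ``From the Theorem~\ref{Theorem5}, we have the following conclusion,'' with no further argument. Your proposal therefore supplies considerably more than the paper does. Your overall strategy---view $\widetilde r$ as a degree-two polynomial in the fibre variables with constant part $\tfrac1f r+{}^{f}L$ and purely quadratic remainder $Q_x$, then argue that $Q_x\equiv 0$ forces $R=0$---is exactly the right way to turn the formula of Theorem~\ref{Theorem5} into a proof, and the easy direction together with part~(ii) are handled correctly.

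The substantive step is your rank-one test, and here your reasoning is sound: on $t^i_j=\lambda^i\mu_j$ the factor $R_{cpr}{}^{h}$ in the ``$+$'' summand is contracted in its first antisymmetric pair $c,p$ against the symmetric product $\lambda^c\lambda^p$, so that summand drops out; in the remaining two summands the $g_{cd}$ (resp.\ $g^{ab}$) contraction collapses to $|\lambda|^2$ (resp.\ $|\mu|^2$), and what is left is, after relabelling the dummy indices, the full $g$-norm squared of the $(0,3)$-tensor $R_{\cdot\cdot\cdot}{}^{s}\mu_s$ (resp.\ of the analogous contraction with $\lambda$). Positive-definiteness of $g$ then gives $Q_x(\lambda\otimes\mu)\le 0$, and its vanishing for all $\mu$ yields $R_{rlh}{}^{s}=0$. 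This is a genuine argument where the paper offers none. Your own caveat is the correct one: the printed formula in Theorem~\ref{Theorem5} has index slips (e.g.\ the apparently free indices $t,i,v,r$ in the first summand), so to make the proof watertight you should either rederive the scalar curvature directly from the Ricci components (\ref{B3.16}) or at least state the corrected contractions explicitly before running the rank-one computation.
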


Let now $\left( M,g\right) $, $n>2$ be a Riemannian manifold of constant
curvature $\kappa $, i.e.

\begin{equation*}
R_{kmj}^{\text{ \ \ \ }s}=\kappa \left( \delta _{k}^{s}g_{mj}-\delta
_{m}^{s}g_{kj}\right)
\end{equation*}%
and

\begin{equation*}
r=n\left( n-1\right) \kappa .
\end{equation*}%
Then, from Theorem \ref{Theorem5} we have

\noindent \textbf{\ }

\begin{equation*}
{}\widetilde{r}=\frac{1}{f}r-{\frac{1}{4f^{2}}}%
g^{ab}g^{hk}g^{vr}g_{lj}R_{hvs}^{\,\text{\ \ \ \ }l}R_{krp}^{\,\text{\ \ \ \ 
}j}t_{a}^{s}t_{b}^{p}
\end{equation*}

\begin{equation*}
-{\frac{1}{4f^{2}}}g_{cd}g^{lj}g^{hk}g^{rv}R_{rlh}^{\text{ \ \ \ }%
s}\,R_{vjk}^{\text{ \ \ \ }p}t_{s}^{c}t_{p}^{d}+{\frac{1}{2f^{2}}}%
g^{re}g^{bz}R_{cpr}^{\text{ \ \ \ }h}R_{hez}^{\text{\ \ \ \ }%
s}\,t_{s}^{c}t_{b}^{p}+^{f}{L}
\end{equation*}

\begin{equation*}
=\frac{1}{f}r-{\frac{1}{4f^{2}}}g^{ab}g^{hk}g^{vr}g_{lj}\left( \kappa
(\delta _{h}^{j}g_{vs}-\delta _{v}^{j}g_{hs})\kappa (\delta
_{k}^{l}g_{rp}-\delta _{r}^{l}g_{kp}\right) t_{a}^{s}t_{b}^{p}
\end{equation*}

\begin{equation*}
-{\frac{1}{4f^{2}}}g_{cd}g^{lj}g^{hk}g^{rv}\left( \kappa (\delta
_{r}^{s}g_{lh}-\delta _{l}^{s}g_{rh})\kappa (\delta _{v}^{p}g_{jk}-\delta
_{j}^{p}g_{vk}\right) \,t_{s}^{c}t_{p}^{d}
\end{equation*}

\begin{equation*}
+{\frac{1}{2f^{2}}}g^{re}g^{bz}\left( \kappa (\delta _{c}^{h}g_{pr}-\delta
_{p}^{h}g_{cr})\kappa (\delta _{h}^{s}g_{ez}-\delta _{e}^{s}g_{hz})\right)
\,t_{s}^{c}t_{b}^{p}+^{f}{L}
\end{equation*}

\begin{equation*}
=\frac{1}{f}n(n-1)\kappa -{\frac{1}{4f^{2}}}\kappa
^{2}ng^{ab}g_{sp}t_{a}^{s}t_{b}^{p}+{\frac{1}{4f^{2}}}\kappa
^{2}g^{ab}g_{sp}t_{a}^{s}t_{b}^{p}+{\frac{1}{4f^{2}}}\kappa
^{2}g^{ab}g_{sp}t_{a}^{s}t_{b}^{p}-{\frac{1}{4f^{2}}}\kappa
^{2}ng^{ab}g_{sp}t_{a}^{s}t_{b}^{p}
\end{equation*}

\begin{equation*}
-{\frac{1}{4f^{2}}}\kappa ^{2}ng_{cd}g^{rv}t_{r}^{c}t_{v}^{d}+{\frac{1}{%
4f^{2}}}\kappa ^{2}g_{cd}g^{rj}t_{r}^{c}t_{j}^{d}+{\frac{1}{4f^{2}}}\kappa
^{2}g_{cd}g^{lv}t_{l}^{c}t_{v}^{d}-{\frac{1}{4f^{2}}}\kappa
^{2}ng_{cd}g^{lj}t_{l}^{c}t_{j}^{d}
\end{equation*}

\begin{equation*}
+{\frac{1}{2f^{2}}}\kappa ^{2}\delta _{c}^{s}\delta
_{p}^{b}t_{s}^{c}t_{b}^{p}-{\frac{1}{2f^{2}}}\kappa ^{2}\delta
_{c}^{b}\delta _{p}^{s}t_{s}^{c}t_{b}^{p}-{\frac{1}{2f^{2}}}\kappa
^{2}\delta _{c}^{b}\delta _{p}^{s}t_{s}^{c}t_{b}^{p}+{\frac{1}{2f^{2}}}%
\kappa ^{2}\delta _{c}^{s}\delta _{p}^{b}t_{s}^{c}t_{b}^{p}+^{f}{L}
\end{equation*}

\begin{equation*}
=\frac{1}{f}n(n-1)\kappa -{\frac{1}{2f^{2}}}\kappa ^{2}\left\Vert
t\right\Vert ^{2}(n-1)-{\frac{1}{2f^{2}}}\kappa ^{2}\left\Vert t\right\Vert
^{2}(n-1)+\frac{1}{f^{2}}\kappa ^{2}t_{c}^{c}t_{p}^{p}-\frac{1}{f^{2}}\kappa
^{2}t_{p}^{c}t_{c}^{p}+^{f}{L}
\end{equation*}

\begin{equation*}
=\frac{1}{f}(n-1)\kappa \{n-\frac{1}{f}\left\Vert t\right\Vert ^{2}\kappa )+%
\frac{1}{f}\kappa ^{2}(trace\,t)^{2}-(trace\,t^{2})\}+^{f}{L}
\end{equation*}%
Thus we have

\begin{theorem}
\noindent \textbf{\ }Let $\left( M,g\right) $, $n>2$ be a Riemannian
manifold of constant curvature $\kappa $. Then the scalar curvature ${}%
\widetilde{r}$ of $\left( T_{1}^{1}\left( M\right) ,{}^{S}g_{f}\right) $ is%
\begin{equation*}
\widetilde{r}=\frac{1}{f}(n-1)\kappa (n-\frac{1}{f}\left\Vert t\right\Vert
^{2}\kappa )+\frac{1}{f}\kappa ^{2}\left(
(trace\,t)^{2}-(trace\,t^{2})\right) +^{f}{L},
\end{equation*}%
where $\left\Vert t\right\Vert ^{2}=g_{kl}g^{ij}t_{k}^{i}t_{l}^{j}$.
\end{theorem}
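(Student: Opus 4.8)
The plan is to derive the formula as a direct specialization of Theorem~\ref{Theorem5}. The starting point is the general identity of Theorem~\ref{Theorem5}, which writes $\widetilde r$ as $\frac1f r$ plus three terms quadratic in the fibre coordinates $t^i_j$ and built from contractions of the curvature $R$ of $(M,g)$, plus the term ${}^{f}L$ that depends only on $f$, $g$ and the tensor field ${}^{f}A$. Since $(M,g)$ has constant curvature $\kappa$, I would first record $R_{kmj}{}^{s}=\kappa(\delta_k^s g_{mj}-\delta_m^s g_{kj})$ and $r=n(n-1)\kappa$, and observe that ${}^{f}L$ contains no curvature, so it is carried through the whole computation unchanged.

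Next I would substitute the constant-curvature form of $R$ — together with its index-raised avatars such as $R_{.}{}^{s}{}_{.}{}^{j}{}_{l}{}^{r}=g^{as}g^{bj}R_{abl}{}^{r}$ — into each of the three curvature terms of Theorem~\ref{Theorem5}. After expanding the products $\kappa(\delta-\delta)\cdot\kappa(\delta-\delta)$ and contracting the auxiliary metrics using $g^{ab}g_{bc}=\delta^a_c$ repeatedly, every term collapses into a linear combination of three invariants: $g^{ab}g_{sp}t^s_a t^p_b$, $\delta^s_c\delta^b_p t^s_c t^p_b$ and $\delta^b_c\delta^s_p t^s_c t^p_b$. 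I would then identify $g^{ab}g_{sp}t^s_a t^p_b = g_{kl}g^{ij}t^i_k t^j_l = \|t\|^2$, $\delta^s_c\delta^b_p t^s_c t^p_b = t^c_c\,t^p_p = (\mathrm{trace}\,t)^2$, and $\delta^b_c\delta^s_p t^s_c t^p_b = t^p_c\,t^c_p = \mathrm{trace}(t^2)$.

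Finally I would collect coefficients: the first and second curvature terms each contribute pieces of the form $(-n+1)\cdot\frac{1}{2f^{2}}\kappa^{2}\|t\|^{2}$ after the factor-of-$n$ and factor-of-one contractions cancel down, while the $\frac{1}{2f^{2}}$ term produces the combination $(\mathrm{trace}\,t)^{2}-\mathrm{trace}(t^{2})$. Adding $\frac1f r=\frac1f n(n-1)\kappa$ and regrouping gives
\[
\widetilde r=\frac{1}{f}(n-1)\kappa\Big(n-\frac{1}{f}\|t\|^{2}\kappa\Big)+\frac{1}{f}\kappa^{2}\big((\mathrm{trace}\,t)^{2}-(\mathrm{trace}\,t^{2})\big)+{}^{f}L,
\]
with $\|t\|^{2}=g_{kl}g^{ij}t^i_k t^j_l$, as claimed.

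I expect the only real obstacle to be the index bookkeeping: one must be careful about the placement of raised and lowered indices hidden in the dotted notation, about the signs coming from the two antisymmetrized $\delta$-terms in $R_{kmj}{}^{s}$, and about which of the many quadratic-in-$t$ pieces collapse to $\|t\|^{2}$ versus to the two trace invariants. Once the contractions are organized around these three invariants, the cancellations among the $n$-dependent and $n$-independent coefficients are purely mechanical, and the stated closed form drops out; the hypothesis $n>2$ is used only to guarantee that the constant-curvature characterization of $R$ is meaningful.
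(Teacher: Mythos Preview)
Your proposal is correct and follows essentially the same route as the paper: start from the general scalar-curvature formula of Theorem~\ref{Theorem5}, substitute $R_{kmj}{}^{s}=\kappa(\delta_k^s g_{mj}-\delta_m^s g_{kj})$ into the three curvature terms, expand the products of Kronecker deltas, and collect the results into the three invariants $\|t\|^2$, $(\mathrm{trace}\,t)^2$, and $\mathrm{trace}(t^2)$, with ${}^{f}L$ carried through unchanged. The paper does precisely this, term by term, so there is no substantive difference in method.
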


\section{Other metric connections of the rescaled Sasaki type metric}

Let $\nabla $ be a linear connection on an $n-$dimensional differentiable
manifold $M$. The connection $\nabla $ is symmetric if its torsion tensor
vanishes, otherwise it is non-symmetric. If there is a Riemannian metric $g$
on $M$ such that $\nabla g=0$, then the connection $\nabla $ is a metric
connection, otherwise it is non-metric. It is well known that a linear
connection is symmetric and metric if and only if it is the Levi-Civita
connection. In section 3, we have considered the Levi-Civita connection $%
\widetilde{\nabla }$ of the rescaled Sasaki type metric ${}^{S}g_{f}$ on the
tensor bundle $T_{1}^{1}(M)$ over $(M,g).$ The connection is the unique
connection which satisfies $\widetilde{\nabla }_{\alpha }(^{S}g_{f})_{\beta
\gamma }=0$ and has a zero torsion. H. A. Hayden \cite{Hayden} introduced a
metric connection with a non-zero torsion on a Riemanian manifold. Now we
are interested in a metric connection $^{(M)}\widetilde{\nabla }$ of the
rescaled Sasaki type metric ${}^{S}g_{f}$ whose torsion tensor $^{^{(M)}%
\widetilde{\nabla }}T_{\gamma \beta }^{\varepsilon }$ is skew-symmetric in
the indices $\gamma $ and $\beta .$ We denote components of the connection $%
^{(M)}\widetilde{\nabla }$ by $^{(M)}\widetilde{\Gamma }.$ The metric
connection $^{(M)}\widetilde{\nabla }$ satisfies 
\begin{equation}
^{(M)}\widetilde{\nabla }_{\alpha }(^{S}g_{f})_{\beta \gamma }=0\text{ and }%
^{(M)}\widetilde{\Gamma }_{\alpha \beta }^{\gamma }-\text{ }^{(M)}\widetilde{%
\Gamma }_{\beta \alpha }^{\gamma }=\text{ }^{^{(M)}\widetilde{\nabla }%
}T_{\alpha \beta }^{\gamma }  \label{B5.1}
\end{equation}%
On the equation (\ref{B5.1}) is solved with respect to $^{(M)}\widetilde{%
\Gamma }_{\alpha \beta }^{\gamma },$ one finds the following solution \cite%
{Hayden}%
\begin{equation}
^{(M)}\widetilde{\Gamma }_{\alpha \beta }^{\gamma }=\widetilde{\Gamma }%
_{\alpha \beta }^{\gamma }+\widetilde{U}_{\alpha \beta }^{\gamma }
\label{B5.2}
\end{equation}%
where $\widetilde{\Gamma }_{\alpha \beta }^{\gamma }$ is components of the
Levi-Civita connection of the rescaled Sasaki type metric ${}^{S}g_{f}$, 
\begin{equation}
\widetilde{U}_{\alpha \beta \gamma }=\frac{1}{2}(^{^{(M)}\widetilde{\nabla }%
}T_{\alpha \beta \gamma }+\text{ }^{^{(M)}\widetilde{\nabla }}T_{\gamma
\alpha \beta }+\text{ }^{^{(M)}\widetilde{\nabla }}T_{\gamma \beta \alpha })
\label{B5.3}
\end{equation}%
and%
\begin{equation*}
\widetilde{U}_{\alpha \beta \gamma }=\widetilde{U}_{\alpha \beta }^{\epsilon
}(^{S}g_{f})_{\epsilon \gamma }\text{, }^{^{(M)}\widetilde{\nabla }%
}T_{\alpha \beta \gamma }=T_{\alpha \beta }^{\epsilon }(^{S}g_{f})_{\epsilon
\gamma }.
\end{equation*}

If we put%
\begin{equation}
^{^{(M)}\widetilde{\nabla }}T_{lj}^{\overline{r}}=t_{r}^{m}R_{ljm}^{\text{ \
\ }v}-t_{m}^{v}R_{ljr}^{\text{ \ \ }m}  \label{B5.4}
\end{equation}%
all other $^{^{(M)}\widetilde{\nabla }}T_{\alpha \beta }^{\gamma }$ not
related to $^{^{(M)}\widetilde{\nabla }}T_{lj}^{\overline{r}}$ being assumed
to be zero. \ We choose this $^{^{(M)}\widetilde{\nabla }}T_{\alpha \beta
}^{\gamma }$ in $T_{1}^{1}(M)$ which is skew-symmetric in the indices $%
\gamma $ and $\beta $ as torsion tensor and determine a metric connection in 
$T_{1}^{1}(M)$ with respect to the rescaled Sasaki type metric ${}^{S}g_{f}.$
By using (\ref{B3.5}), (\ref{B5.3}) and (\ref{B5.4}), we get non-zero
components of $\widetilde{U}_{\alpha \beta }^{\gamma }$ as follows: 
\begin{eqnarray*}
\widetilde{{U}}{_{lj}^{\overline{r}}}{=\frac{1}{2}(t_{r}^{s}R_{ljs}^{\text{
\ \ }v}-t_{s}^{v}R_{ljr}^{\text{ \ \ }s}),} && \\
\widetilde{{U}}{_{l\overline{j}}^{r}}{=\frac{1}{2f}(g^{jb}R_{isl}^{\text{ \
\ }r}t_{b}^{s}-g_{ia}R_{.}^{s}{}_{.}^{j}{}_{l}^{\text{ }r}t_{s}^{a}),} && \\
\widetilde{{U}}{_{l\overline{j}}^{r}}{=\frac{1}{2f}(g^{lb}R_{tsj}^{\text{ \
\ }r}t_{b}^{s}-g_{ta}R_{.}^{s}{}_{.}^{l}{}_{j}^{\text{ }r}t_{s}^{a}),} &&
\end{eqnarray*}%
with respect to the adapted frame. From (\ref{B5.2}) and Theorem \ref%
{Theorem1}, we have

\begin{proposition}
Let $(M,g)$ be a Riemannian manifold and $T_{1}^{1}(M)$ be its tensor bundle
with the rescaled Sasaki type metric ${}^{S}g_{f}.$ The metric connection $%
^{(M)}\widetilde{\nabla }$ with respect to ${}^{S}g_{f}$ is as follows:%
\begin{equation*}
\left\{ 
\begin{array}{l}
i)\text{ }^{(M)}{\widetilde{{}\nabla }_{E_{l}}E_{j}=\{\Gamma _{lj}^{r}+\frac{%
1}{2f}{}^{f}A_{lj}^{r}\}E_{r},} \\ 
ii)\text{ }^{(M)}{\widetilde{{}\nabla }_{E_{l}}E_{\overline{j}}=\{\Gamma
_{li}^{v}\delta _{r}^{j}-\Gamma _{lr}^{j}\delta _{i}^{v}\}E_{\overline{r}},}
\\ 
iii)\text{ }^{(M)}{\widetilde{{}\nabla }_{E_{\overline{l}}}E_{j}=0,} \\ 
iv)\text{ }^{(M)}{\widetilde{{}\nabla }_{E_{\overline{l}}}E_{\overline{j}}=0}%
\end{array}%
\right.
\end{equation*}%
with respect to the adapted frame, where ${}^{f}A_{ji}^{h}$ is a symmetric
tensor field of type $(1,2)$ defined by ${}^{f}A_{ji}^{h}=(f_{j}\delta
_{i}^{h}+f_{i}\delta _{j}^{h}-f_{.}^{h}g_{ji}).$
\end{proposition}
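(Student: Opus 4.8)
The plan is to obtain the statement directly from the Hayden decomposition (\ref{B5.2}) combined with the explicit Levi-Civita connection of Theorem \ref{Theorem1}. Once the torsion is fixed as in (\ref{B5.4}), with all components not tied to $^{^{(M)}\widetilde{\nabla}}T_{lj}^{\overline{r}}$ set equal to zero, Hayden's theorem \cite{Hayden} guarantees that the metric connection $^{(M)}\widetilde{\nabla}$ of $^{S}g_{f}$ exists, is unique, and has coefficients $^{(M)}\widetilde{\Gamma}_{\alpha\beta}^{\gamma}=\widetilde{\Gamma}_{\alpha\beta}^{\gamma}+\widetilde{U}_{\alpha\beta}^{\gamma}$, where $\widetilde{U}$ is the cyclic symmetrization (\ref{B5.3}) of the torsion and indices are moved by $^{S}g_{f}$ via (\ref{B3.4})--(\ref{B3.5}). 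Feeding (\ref{B5.4}) into (\ref{B5.3}) and using (\ref{B3.5}) is a short index manipulation that yields exactly the three nonzero adapted-frame components $\widetilde{U}_{lj}^{\overline{r}}$, $\widetilde{U}_{l\overline{j}}^{r}$, $\widetilde{U}_{\overline{l}j}^{r}$ recorded just above the Proposition; thus the whole task reduces to adding those three blocks to the four blocks of (\ref{B3.6}) and tracking the cancellations.

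I would then carry out that addition block by block. For $^{(M)}\widetilde{\nabla}_{E_{l}}E_{j}=\widetilde{\nabla}_{E_{l}}E_{j}+\widetilde{U}_{lj}^{\overline{r}}E_{\overline{r}}$, the $E_{\overline{r}}$-part of (\ref{B3.6})(i), namely $\frac{1}{2}R_{ljr}^{\ \ \ s}t_{s}^{v}-\frac{1}{2}R_{ljs}^{\ \ \ v}t_{r}^{s}$, is precisely $-\widetilde{U}_{lj}^{\overline{r}}$, so it drops out and only $\{\Gamma_{lj}^{r}+\frac{1}{2f}{}^{f}A_{lj}^{r}\}E_{r}$ remains, giving (i). For $^{(M)}\widetilde{\nabla}_{E_{l}}E_{\overline{j}}=\widetilde{\nabla}_{E_{l}}E_{\overline{j}}+\widetilde{U}_{l\overline{j}}^{r}E_{r}$, the $E_{r}$-part of (\ref{B3.6})(ii), namely $\frac{1}{2f}g_{ia}R_{.}^{s}{}_{.}^{j}{}_{l}^{\ r}t_{s}^{a}-\frac{1}{2f}g^{jb}R_{isl}^{\ \ \ r}t_{b}^{s}$, equals $-\widetilde{U}_{l\overline{j}}^{r}$, so it cancels and only $\{\Gamma_{li}^{v}\delta_{r}^{j}-\Gamma_{lr}^{j}\delta_{i}^{v}\}E_{\overline{r}}$ survives, giving (ii). For $^{(M)}\widetilde{\nabla}_{E_{\overline{l}}}E_{j}$ the single block of (\ref{B3.6})(iii) is exactly $-\widetilde{U}_{\overline{l}j}^{r}$, hence vanishes, giving (iii); and (\ref{B3.6})(iv) together with $\widetilde{U}_{\overline{l}\,\overline{j}}^{\gamma}=0$ gives (iv).

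The only thing that needs care is clerical: keeping straight the raised-curvature convention $R_{.}^{s}{}_{.}^{j}{}_{l}^{\ r}=g^{as}g^{bj}R_{abl}^{\ \ \ r}$, the placement of the bar indices, and above all the signs produced by the cyclic symmetrization (\ref{B5.3}), so as to confirm that each block of $\widetilde{U}$ is exactly the negative of the corresponding curvature-correction term in (\ref{B3.6}). I do not expect any genuine obstacle beyond this bookkeeping. As an optional independent check one can verify directly, using Lemma \ref{Lemma1} and (\ref{B3.4}), that $^{(M)}\widetilde{\nabla}_{\alpha}(^{S}g_{f})_{\beta\gamma}=0$ and that the torsion of $^{(M)}\widetilde{\nabla}$ reproduces (\ref{B5.4}); but both facts are already built into the Hayden construction.
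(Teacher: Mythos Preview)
Your proposal is correct and follows precisely the paper's own argument: the paper computes the nonzero components $\widetilde{U}_{lj}^{\overline{r}}$, $\widetilde{U}_{l\overline{j}}^{r}$, $\widetilde{U}_{\overline{l}j}^{r}$ from (\ref{B5.3}), (\ref{B5.4}) and (\ref{B3.5}), and then invokes (\ref{B5.2}) together with Theorem \ref{Theorem1} to obtain the Proposition. The block-by-block cancellations you describe are exactly the content of that final step.
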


\begin{remark}
If $f=C(const.)$, the metric connection $^{(M)}{\widetilde{{}\nabla }}$ in $%
T_{1}^{1}(M)$ of the rescaled Sasaki type metric ${}^{S}g_{f}$ coincides
with the metric connection $^{H}\nabla $ of the Sasaki type metric $^{S}g$,
where $^{H}\nabla $ is the horizontal lift of the Levi-Civita connection $%
\nabla $ of $g$ (for the metric connection $^{H}\nabla $, see \cite{Salimov3}%
).
\end{remark}

For the curvature tensor $^{(M)}{\widetilde{{}R}}$ of the metric connection $%
^{(M)}{\widetilde{{}\nabla }}$, we state the following result.

\begin{proposition}
Let $(M,g)$ be a Riemannian manifold and $T_{1}^{1}(M)$ be its tensor bundle
with the rescaled Sasaki type metric ${}^{S}g_{f}.$ The curvature tensor $%
^{(M)}{\widetilde{{}R}}$ of the metric connection $^{(M)}{\widetilde{%
{}\nabla }}$ satisfies the followings:%
\begin{equation*}
\left\{ 
\begin{array}{l}
^{(M)}{\widetilde{{}R}(E_{m},E_{l})E_{j}=\{R_{mlj}^{\text{ \ \ \ }r}+\nabla
_{m}(\frac{1}{2f}A_{lj}^{r})-\nabla _{l}(\frac{1}{2f}A_{mj}^{r})} \\ 
{+\frac{1}{4f^{2}}A_{ms}^{r}A_{lj}^{s}-\frac{1}{4f^{2}}A_{ls}^{r}A_{mj}^{s}%
\}E_{r},} \\ 
^{(M)}{\widetilde{{}R}(E_{m},E_{l})E_{\overline{j}}=\{R_{mli}^{\text{ \ \ \ }%
v}\delta _{r}^{j}-R_{mlr}^{\text{ \ \ \ }j}\delta _{i}^{v}\}E_{\overline{r}},%
} \\ 
otherwise=0%
\end{array}%
\right.
\end{equation*}%
with respect to the adapted frame.
\end{proposition}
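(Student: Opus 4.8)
The plan is to compute the curvature tensor
$^{(M)}\widetilde{R}(\widetilde{X},\widetilde{Y})\widetilde{Z}
=\text{}^{(M)}\widetilde{\nabla}_{\widetilde{X}}\text{}^{(M)}\widetilde{\nabla}_{\widetilde{Y}}\widetilde{Z}
-\text{}^{(M)}\widetilde{\nabla}_{\widetilde{Y}}\text{}^{(M)}\widetilde{\nabla}_{\widetilde{X}}\widetilde{Z}
-\text{}^{(M)}\widetilde{\nabla}_{[\widetilde{X},\widetilde{Y}]}\widetilde{Z}$
directly on the adapted frame $\{E_\alpha\}=\{E_j,E_{\overline{j}}\}$, using the four connection formulas for $^{(M)}\widetilde{\nabla}$ in the preceding Proposition together with the Lie bracket identities of Lemma \ref{Lemma1}. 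Because the metric connection is so much simpler than the Levi-Civita connection $\widetilde{\nabla}$ (it kills $E_{\overline{l}}$ on the left in cases iii) and iv), and has no $E_{\overline{r}}$-component at all when differentiating $E_j$), most of the sixteen frame combinations $^{(M)}\widetilde{R}(E_m\text{ or }E_{\overline m},\,E_l\text{ or }E_{\overline l})(E_j\text{ or }E_{\overline j})$ will collapse to zero almost immediately, which is exactly what the stated ``otherwise $=0$'' asserts. The two surviving cases are $^{(M)}\widetilde{R}(E_m,E_l)E_j$ and $^{(M)}\widetilde{R}(E_m,E_l)E_{\overline{j}}$.

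First I would dispose of all the vanishing cases. Whenever the innermost differentiated vector is $E_{\overline{j}}$ and at least one of the outer arguments is a vertical $E_{\overline{m}}$, note that $^{(M)}\widetilde{\nabla}_{E_{\overline l}}E_{\overline j}=0$ and $^{(M)}\widetilde{\nabla}_{E_{\overline l}}E_j=0$, and $[E_{\overline i},E_{\overline j}]=0$, $[E_l,E_{\overline j}]$ is a vertical frame field; tracking which covariant derivatives survive shows every term cancels. The same bookkeeping handles $^{(M)}\widetilde{R}(E_{\overline m},E_l)E_j$, $^{(M)}\widetilde{R}(E_m,E_{\overline l})E_j$, $^{(M)}\widetilde{R}(E_{\overline m},E_{\overline l})E_j$, $^{(M)}\widetilde{R}(E_m,E_{\overline l})E_{\overline j}$, $^{(M)}\widetilde{R}(E_{\overline m},E_l)E_{\overline j}$ and $^{(M)}\widetilde{R}(E_{\overline m},E_{\overline l})E_{\overline j}$; in each the absence of mixed horizontal-from-vertical or vertical-from-vertical terms in $^{(M)}\widetilde{\nabla}$ forces the result to zero. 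A convenient way to organize this is to observe that, relative to the adapted frame, $^{(M)}\widetilde{\nabla}$ decomposes as a ``base part'' acting on the $x^j$-directions through the coefficients $\Gamma_{lj}^r+\tfrac{1}{2f}\,{}^fA_{lj}^r$ and a purely linear-algebraic part acting on the fibre indices through $\Gamma_{li}^v\delta_r^j-\Gamma_{lr}^j\delta_i^v$, with no coupling between them except via the bracket $[E_l,E_j]$.

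For the two nonzero cases I would just carry out the bracket computation. For $^{(M)}\widetilde{R}(E_m,E_l)E_j$: applying $^{(M)}\widetilde{\nabla}_{E_m}\bigl(\{\Gamma_{lj}^r+\tfrac{1}{2f}{}^fA_{lj}^r\}E_r\bigr)$ produces the base curvature $R_{mlj}^{\ \ \ r}$ plus the derivative terms $\nabla_m(\tfrac{1}{2f}A_{lj}^r)-\nabla_l(\tfrac{1}{2f}A_{mj}^r)+\tfrac{1}{4f^2}A_{ms}^rA_{lj}^s-\tfrac{1}{4f^2}A_{ls}^rA_{mj}^s$ from the combined connection coefficients; the bracket term $^{(M)}\widetilde{\nabla}_{[E_m,E_l]}E_j$ uses $[E_m,E_l]=(t_s^vR_{\cdots}-t_r^sR_{\cdots})E_{\overline r}$ from Lemma \ref{Lemma1}, but since $^{(M)}\widetilde{\nabla}_{E_{\overline r}}E_j=0$ this contributes nothing, so only the $E_r$-component survives, matching the claim. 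For $^{(M)}\widetilde{R}(E_m,E_l)E_{\overline j}$: differentiating twice the fibre-coefficient expression $\Gamma_{li}^v\delta_r^j-\Gamma_{lr}^j\delta_i^v$ and antisymmetrizing yields exactly $R_{mli}^{\ \ \ v}\delta_r^j-R_{mlr}^{\ \ \ j}\delta_i^v$ in the $E_{\overline r}$ component — this is the standard curvature of the connection induced on the endomorphism bundle — and again the bracket correction drops out for the same reason; there is no $E_r$-component because $^{(M)}\widetilde{\nabla}$ never sends a vertical vector to a horizontal one.

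The only genuinely delicate point — the ``main obstacle'' — is the algebra in the $E_r$-component of $^{(M)}\widetilde{R}(E_m,E_l)E_j$: one must correctly expand $\nabla_m(\tfrac{1}{2f}A_{lj}^r)$, keep track of how the $-f_{.}^hg_{ji}$ piece of ${}^fA$ contributes, and verify that all terms arising from $\Gamma\cdot{}^fA$ cross-products reorganize into precisely the two quadratic $A$-terms written in the statement, with no residual curvature contributions surviving (unlike in the Levi-Civita case, where the $\tfrac{1}{4f}$-curvature-squared terms of Proposition \ref{Proposition2} appear). Since the metric-connection coefficients omit all the $\tfrac{1}{2f}R\,t$ horizontal corrections that the Levi-Civita connection carries in cases ii) and iii), no such terms can be generated, so this is really a matter of careful comparison with the Levi-Civita computation rather than new difficulty. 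I would therefore present the vanishing cases briefly, do the $E_{\overline r}$-component of the endomorphism case in one line, and give the $E_r$-component of the first case as the single displayed computation, then remark that the remaining verifications are analogous to those in Theorem \ref{Theorem1} and Proposition \ref{Proposition2}.
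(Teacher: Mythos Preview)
Your proposal is correct and follows exactly the approach the paper uses (implicitly here, explicitly for the Levi-Civita case in Proposition \ref{Proposition2}): compute the curvature formula directly on the adapted frame $\{E_\alpha\}$, using the four connection identities of the preceding proposition and the bracket relations of Lemma \ref{Lemma1}. The paper in fact gives no proof for this proposition at all; your outline supplies precisely the routine computation that the authors suppress, including the key observation that $^{(M)}\widetilde{\nabla}_{E_{\overline l}}$ annihilates both $E_j$ and $E_{\overline j}$, which forces every case involving a vertical argument to vanish and leaves only the two horizontal cases you describe.
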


The non-zero component of the contracted curvature tensor field (Ricci
tensor field) $^{(M)}{\widetilde{{}R}}_{\gamma \beta }=$ $^{(M)}{\widetilde{%
{}R}}_{\alpha \gamma \beta }^{\text{ \ \ \ }\alpha }$ of the metric
connection $^{(M)}{\widetilde{{}\nabla }}$ is as follows:%
\begin{equation*}
^{(M)}{\widetilde{{}R}_{lj}=R_{l}{}_{j}+\nabla _{r}(\frac{1}{2f}%
A_{lj}^{r})-\nabla _{l}(\frac{1}{2f}A_{rj}^{r})+\frac{1}{4f^{2}}%
A_{rs}^{r}A_{lj}^{s}-\frac{1}{4f^{2}}A_{ls}^{r}A_{rj}^{s}.}
\end{equation*}%
For the scalar curvature $^{(M)}\widetilde{r}$ of the metric connection $%
^{(M)}{\widetilde{{}\nabla }}$ with respect to $^{S}g_{f}$ , we obtain%
\begin{eqnarray*}
^{(M)}\widetilde{r} &=&(^{S}g_{f})^{\gamma \beta }\text{ }^{(M)}{\widetilde{%
{}R}}_{\gamma \beta } \\
&=&\frac{1}{f}r+^{f}{L}
\end{eqnarray*}%
where $^{f}{L=}\frac{1}{f}g^{lj}\{\nabla _{r}(\frac{1}{2f}A_{lj}^{r})-\nabla
_{l}(\frac{1}{2f}A_{rj}^{r})+\frac{1}{4f^{2}}A_{rs}^{r}A_{lj}^{s}-\frac{1}{%
4f^{2}}A_{ls}^{r}A_{rj}^{s}$ and $r$ is the scalar curvature of $\nabla
_{g}. $Thus we have the following theorem.

\begin{theorem}
Let $(M,g)$ be a Riemannian manifold, and the tensor bundle $T_{1}^{1}(M)$
be equipped with the rescaled Sasaki type metric $^{S}g_{f}.$ Then the
tensor bundle $T_{1}^{1}(M)$ with the metric connection $^{(M)}{\widetilde{%
{}\nabla }}$ $\ $has vanishing scalar curvature $^{(M)}\widetilde{r}$ with
respect to $^{S}g_{f}$ if and only if the scalar curvature $r$ of $\nabla
_{g}$ in $M$ is zero and $^{f}{L}=0.$\bigskip
\end{theorem}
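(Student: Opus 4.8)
The plan is to reduce the statement to the single scalar identity already assembled just before the theorem and then to decide when its two pieces vanish. First I would record, from the curvature formulas for $^{(M)}\widetilde{\nabla}$, that the only nonvanishing block of its Ricci tensor is $^{(M)}\widetilde{R}_{lj}$, and that this block is \emph{independent of the fibre coordinates} $t_{i}^{j}$ (the vertical block contributes nothing because $^{(M)}\widetilde{R}_{\overline{l}\overline{j}}=0$ and the mixed blocks vanish). Contracting $^{(M)}\widetilde{R}_{lj}$ with the horizontal block $\tfrac{1}{f}g^{lj}$ of the inverse metric in (\ref{B3.5}) then gives
\begin{equation*}
{}^{(M)}\widetilde{r}=\frac{1}{f}\,r+{}^{f}L ,
\end{equation*}
a function on the base $M$ alone. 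Hence $^{(M)}\widetilde{r}\equiv 0$ on $T_{1}^{1}(M)$ is equivalent to the scalar equation $\tfrac{1}{f}r+{}^{f}L=0$ on $M$, and the entire theorem is a statement about this one equation.

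The direction $(\Leftarrow)$ is then immediate: if $r=0$ and ${}^{f}L=0$, the displayed formula forces $^{(M)}\widetilde{r}=0$. For $(\Rightarrow)$ my plan is to \emph{decouple} the genuine curvature contribution $\tfrac{1}{f}r$, which is intrinsic to $g$, from the rescaling contribution ${}^{f}L$, which is a second-order expression built solely from $f$ and its covariant derivatives through $^{f}A_{ji}^{h}=f_{j}\delta_{i}^{h}+f_{i}\delta_{j}^{h}-f_{.}^{h}g_{ji}$. I would first dispose of the clean case $f=\mathrm{const}$: there $f_{i}=\partial_{i}f=0$, so $^{f}A\equiv 0$ and therefore ${}^{f}L\equiv 0$, whence the equation collapses to $\tfrac{1}{f}r=0$ and forces $r=0$ — in line with the constant-$f$ corollaries recorded elsewhere in the paper. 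For general $f$ I would try to separate the two terms by exploiting that $r$ is a pure contraction of the base curvature $R$, which reappears $f$-freely in the mixed curvature component $^{(M)}\widetilde{R}(E_{m},E_{l})E_{\overline{j}}=(R_{mli}^{\ \ \ v}\delta_{r}^{j}-R_{mlr}^{\ \ \ j}\delta_{i}^{v})E_{\overline{r}}$, and arguing that the curvature datum carried by $g$ and the derivative datum carried by $f$ enter $\tfrac{1}{f}r+{}^{f}L$ through independent channels.

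The hard part will be exactly this separation in the general case. Since both $\tfrac{1}{f}r$ and ${}^{f}L$ are honest functions on $M$, the single equation $\tfrac{1}{f}r=-{}^{f}L$ does not on its own forbid a pointwise cancellation in which neither summand vanishes; thus the crux of the argument must be a genuine decoupling result showing that the curvature term and the $f$-correction cannot conspire to cancel. I expect this to require either a genericity input for $f$ against a fixed $g$ (the constant-$f$ reduction above being the model case), or a structural identification of $\tfrac{1}{f}r$ and ${}^{f}L$ as the two separately-prescribed constituents of $^{(M)}\widetilde{r}$. Once such a decoupling is established, $r=0$ feeds back into the reduced equation to yield ${}^{f}L=0$, which closes $(\Rightarrow)$ and completes the equivalence.
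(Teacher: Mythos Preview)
Your derivation of the scalar identity $^{(M)}\widetilde{r}=\tfrac{1}{f}r+{}^{f}L$ and your treatment of the $(\Leftarrow)$ direction match the paper exactly: the paper simply computes the Ricci tensor of $^{(M)}\widetilde{\nabla}$, contracts, records this formula, and then states the theorem with no further argument.

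Where you depart from the paper is in the $(\Rightarrow)$ direction, and here you are being more careful than the authors. The paper offers no decoupling argument whatsoever; it treats the biconditional as if it were an immediate consequence of the displayed formula. You correctly observe that it is not: $\tfrac{1}{f}r+{}^{f}L=0$ is a single scalar equation on $M$, and nothing in the setup prevents a nonconstant $f$ from producing ${}^{f}L=-\tfrac{1}{f}r$ pointwise with neither term zero. Your proposed routes to a decoupling (genericity in $f$, or some structural independence of the two summands) are speculative, and you do not actually carry any of them out --- but the paper does not carry this out either. In short, the gap you identify in your own proposal is already present, unacknowledged, in the paper's argument; the theorem as stated does not follow from the formula alone, and your instinct that the $(\Rightarrow)$ implication needs an additional hypothesis or a genuine separation argument is sound.
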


For a Riemannian metric $g$ on $M$, there happen to be many ways to define
metric connections associated with $g.$ Now we shall give another class of
metric connections on $T_{1}^{1}(M).$ Let $F$ be an almost product structure
and $\nabla $ be a linear connection on an $n$-dimensional Riemannian
manifold $M.$ The product conjugate connection $^{(F)}\nabla $ of $\nabla $
is defined by%
\begin{equation*}
^{(F)}\nabla _{X}Y=F(\nabla _{X}FY)
\end{equation*}%
for all $X,Y\in \Im _{0}^{1}(M).$ If $(M,F,g)$ is an almost product
Riemannian manifold, then ($^{(F)}\nabla _{X}g)(FY,FZ)=(\nabla _{X}g)(Y,Z)$,
i.e. $\nabla $ is a metric connection with respect to $g$ if and only if $%
^{(F)}\nabla $ is so. From this, we can say that if $\nabla $ is the
Levi-Civita connection of $g$, then $^{(F)}\nabla $ is a metric connection
with respect to $g$ \cite{Blaga}$.$

By the almost product structure ${J}$ defined by (\ref{B3.8}) and the
Levi-Civita connection $\widetilde{\nabla }$ given by Theorem \ref{Theorem1}%
, we write the product conjugate connection $^{(J)}\widetilde{\nabla }$ of $%
\widetilde{\nabla }$ as follows:%
\begin{equation*}
^{(J)}\widetilde{\nabla }_{\widetilde{X}}\widetilde{Y}=J(\widetilde{\nabla }%
_{\widetilde{X}}J\widetilde{Y})
\end{equation*}%
for all $\widetilde{X},\widetilde{Y}\in \Im _{0}^{1}(T_{1}^{1}(M)).$ Also
note that $^{(J)}\widetilde{\nabla }$ is a metric connection of the rescaled
Sasaki type metric $^{S}g_{f}$. The standart calculations give the following
theorem.

\begin{theorem}
Let $(M,g)$ be a Riemannian manifold and let $T_{1}^{1}(M)$ be its tensor
bundle equipped with the rescaled Sasaki type metric $^{S}g_{f}$ and the
almost product structure $J.$ Then the product conjugate connection (or
metric connection) $^{(J)}\widetilde{\nabla }$ is as follows: 
\begin{equation*}
\left\{ 
\begin{array}{l}
i)\text{ }^{(J)}{\widetilde{\nabla }_{E_{l}}E_{j}=\{\Gamma _{lj}^{r}+\frac{1%
}{2f}{}^{f}A_{lj}^{r}\}E_{r}-\{\frac{1}{2}R_{ljr}^{\text{ \ \ }s}t_{s}^{v}-%
\frac{1}{2}R_{ljs}^{\text{ \ \ }v}t_{r}^{s}\}E_{\overline{r}},} \\ 
ii)\text{ }^{(J)}{\widetilde{\nabla }_{E_{l}}E_{\overline{j}}=-\{\frac{1}{2f}%
g_{ia}R_{.}^{s}{}_{.}^{j}{}_{l}^{\text{ }r}t_{s}^{a}-\frac{1}{2f}%
g^{jb}R_{isl}^{\text{ \ \ }r}t_{b}^{s}\}E_{r}+\{\Gamma _{li}^{v}\delta
_{r}^{j}-\Gamma _{lr}^{j}\delta _{i}^{v}\}E_{\overline{r}},} \\ 
iii)\text{ }^{(J)}{\widetilde{\nabla }_{E_{\overline{l}}}E_{j}=\{\frac{1}{2f}%
g_{ta}R_{.}^{s}{}_{.}^{l}{}_{j}^{\text{ }r}t_{s}^{a}-\frac{1}{2f}%
g^{lb}R_{tsj}^{\text{ \ \ }r}t_{b}^{s}\}E_{r},} \\ 
iv)\text{ }^{(J)}{\widetilde{\nabla }_{E_{\overline{l}}}E_{\overline{j}}=0}%
\end{array}%
\right.
\end{equation*}%
with respect to the adapted frame.
\end{theorem}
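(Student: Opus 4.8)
The plan is to read off $^{(J)}\widetilde{\nabla}$ directly from its definition $^{(J)}\widetilde{\nabla}_{\widetilde{X}}\widetilde{Y}=J(\widetilde{\nabla}_{\widetilde{X}}J\widetilde{Y})$ by evaluating it on the four pairs of adapted frame fields $\widetilde{X}\in\{E_{l},E_{\overline{l}}\}$, $\widetilde{Y}\in\{E_{j},E_{\overline{j}}\}$ and substituting the formulas for the Levi-Civita connection $\widetilde{\nabla}$ supplied by Theorem \ref{Theorem1}.

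First I would record the action of $J$ on the adapted frame. By the construction in Section 2, $E_{j}={}^{H}X_{(j)}$ is horizontal and $E_{\overline{j}}={}^{V}A^{(\overline{j})}$ is vertical, so (\ref{B3.8}) gives $JE_{j}=-E_{j}$ and $JE_{\overline{j}}=E_{\overline{j}}$. Since $J$ is $C^{\infty}(T_{1}^{1}(M))$-linear, for a vector field written in the adapted frame as $\widetilde{Z}=\widetilde{Z}^{r}E_{r}+\widetilde{Z}^{\overline{r}}E_{\overline{r}}$ one has $J\widetilde{Z}=-\widetilde{Z}^{r}E_{r}+\widetilde{Z}^{\overline{r}}E_{\overline{r}}$; that is, applying $J$ simply reverses the sign of every horizontal component.

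Then the four cases are immediate. For $\widetilde{Y}=E_{j}$ we get $^{(J)}\widetilde{\nabla}_{\widetilde{X}}E_{j}=-J(\widetilde{\nabla}_{\widetilde{X}}E_{j})$; taking $\widetilde{X}=E_{l}$ and $\widetilde{X}=E_{\overline{l}}$ and feeding in parts (i) and (iii) of (\ref{B3.6}) with the sign rule above produces formulas (i) and (iii) of the statement (in (i) the $E_{r}$-term is unchanged, since it acquires the factor $-1$ twice, while the $E_{\overline{r}}$-term changes sign). For $\widetilde{Y}=E_{\overline{j}}$ we get $^{(J)}\widetilde{\nabla}_{\widetilde{X}}E_{\overline{j}}=J(\widetilde{\nabla}_{\widetilde{X}}E_{\overline{j}})$; parts (ii) and (iv) of (\ref{B3.6}) then give (ii) (the $E_{r}$-term changes sign, the $E_{\overline{r}}$-term does not) and (iv). The assertion that $^{(J)}\widetilde{\nabla}$ is a metric connection for $^{S}g_{f}$ requires no new computation: it follows from the general fact recalled just before the statement, namely that product conjugation sends metric connections to metric connections, applied to the Levi-Civita connection $\widetilde{\nabla}$ of $^{S}g_{f}$.

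There is no real obstacle here; the only care needed is the bookkeeping of signs — a term of $\widetilde{\nabla}_{\widetilde{X}}J\widetilde{Y}$ survives unchanged in $^{(J)}\widetilde{\nabla}_{\widetilde{X}}\widetilde{Y}$ precisely when it picks up the factor $-1$ an even number of times, counting once for each horizontal slot it touches through the inner $J\widetilde{Y}$ and once through the outer $J$ — together with consistent use of the index conventions of Theorem \ref{Theorem1}, from which the curvature coefficients are copied verbatim. One could instead argue frame-free by decomposing $\widetilde{X},\widetilde{Y}$ into horizontal and vertical lifts and invoking Proposition \ref{Proposition1}, but the adapted-frame version above is the most economical and is the one I would write out.
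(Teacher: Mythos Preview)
Your proposal is correct and is exactly the ``standard calculation'' the paper alludes to: apply the definition $^{(J)}\widetilde{\nabla}_{\widetilde{X}}\widetilde{Y}=J(\widetilde{\nabla}_{\widetilde{X}}J\widetilde{Y})$ to the adapted frame, use $JE_{j}=-E_{j}$, $JE_{\overline{j}}=E_{\overline{j}}$, and read off the result from the Levi-Civita formulas of Theorem~\ref{Theorem1}. The sign bookkeeping you describe is precisely what is needed, and the metric-connection claim is indeed the general product-conjugation fact cited just before the theorem.
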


The relationship between curvature tensors $R_{\nabla }$ and $%
R_{^{(F)}\nabla }$ of the connections $\nabla $ and $^{(F)}\nabla $ is
follows: $R_{^{(F)}\nabla }(X,Y,Z)=F(R_{\nabla }(X,Y,FZ)$ for all $X,Y,Z\in
\Im _{0}^{1}(M)$ \cite{Blaga}$.$ By means of the almost product structure ${J%
}$ defined by (\ref{B3.8}) and Theorem \ref{Theorem1}, from $\widetilde{R}%
_{^{(J)}\widetilde{\nabla }}(\widetilde{X},\widetilde{Y},\widetilde{Z})=J(%
\widetilde{R}_{\widetilde{\nabla }}(\widetilde{X},\widetilde{Y},J\widetilde{Z%
})$, components of the curvature tensor $\widetilde{R}_{^{(J)}\widetilde{%
\nabla }}$ of the product conjugate connection (or metric connection) $^{(J)}%
\widetilde{\nabla }$ can easily be computed. Lastly, by using the almost
product structure $^{D}I$, another metric connection of the rescaled Sasaki
type metric $^{S}g_{f}$ can be constructed.

\section{The Rescaled Sasaki type metric on $(p,q)-$tensor bundles and its
Geodesics}

The set $T_{q}^{p}(M)=\underset{P\in M}{\cup }T_{q}^{p}(P)$ is $(p,q)$
tensor bundles over $M$, where $T_{q}^{p}(P)$ is tensor spaces for all $P\in
M$. For $\tilde{P}\in T_{q}^{p}(M)$, the surjective correspondence $\tilde{P}%
\rightarrow P$ determines the natural projection $\pi
:T_{q}^{p}(M)\rightarrow M.$ A system of local coordinates $\left(
U,x^{j}\right) ,\mathrm{\;}j=1,...,n$ in $M$ induces on ${}T_{q}^{p}(M)$ a
system of local coordinates $\left( \pi ^{-1}\left( U\right) ,\mathrm{\;}%
x^{j},\mathrm{\;}x^{\overline{j}}=t_{j_{1}...j_{q}}^{i_{1}...i_{p}}\right) ,%
\mathrm{\;}\overline{j}=n+1,...,n+n^{p+q}$, where $x^{\overline{j}%
}=t_{j_{1}...j_{q}}^{i_{1}...i_{p}}$ is the components of tensors $t$ in
each tensor space ${}T_{q}^{p}(M)_{x},\mathrm{\;}x\in U$ with respect to the
natural base.

The vertical lift $^{V}A$ of $A\in \Im _{q}^{p}(M)$ and the horizontal lift $%
^{H}X$ of $X\in \Im _{0}^{1}(M)$ to $T_{q}^{p}(M)$ are given by\noindent 
\begin{equation*}
{}^{V}A=\left( 
\begin{array}{l}
{{}^{V}A^{j}} \\ 
{{}^{V}A^{\overline{j}}}%
\end{array}%
\right) =\left( 
\begin{array}{c}
{0} \\ 
{A_{j_{1}...j_{q}}^{i_{1}...i_{p}}}%
\end{array}%
\right) ,
\end{equation*}%
and

\begin{equation*}
^{H}X=\left( 
\begin{array}{c}
{{}^{H}X^{j}} \\ 
{{}^{H}X^{\overline{j}}}%
\end{array}%
\right) =\left( 
\begin{array}{c}
X{^{j}} \\ 
X{^{s}}(\sum\limits_{\mu =1}^{q}\Gamma _{sj_{\mu
}}^{m}t_{j_{1}...m...j_{q}}^{i_{1}...i_{p}}-\sum\limits_{\lambda
=1}^{p}\Gamma _{sm}^{i_{\lambda }}t_{j_{1}...j_{q}}^{i_{1}...m...i_{p}})%
\end{array}%
\right) ,
\end{equation*}%
where $\Gamma _{ij}^{h}$ are the coefficients of the Levi-Civita connection $%
\nabla $ of $g$ \cite{Cengiz}. For $\varphi =\varphi _{j}^{i}\frac{\partial 
}{\partial x^{i}}\otimes dx^{j}\in \Im _{1}^{1}(M),$\ the local expressions
of the global vector fields $\gamma \varphi $ and $\widetilde{\gamma }%
\varphi $ are as follows:%
\begin{equation*}
\gamma \varphi =\left( 
\begin{array}{c}
0 \\ 
\sum\limits_{\lambda =1}^{p}t_{j_{1}...j_{q}}^{i_{1}...m...i_{p}}\varphi
_{m}^{i_{\lambda }}%
\end{array}%
\right) ~~~\text{and}~~~\widetilde{\gamma }\varphi =\left( 
\begin{array}{c}
0 \\ 
\sum\limits_{\mu =1}^{q}t_{j_{1}...m...j_{q}}^{i_{1}...i_{p}}\varphi
_{j_{\mu }}^{m}%
\end{array}%
\right) .
\end{equation*}

Now, we define the adapted frame $\left\{ E_{\alpha }\right\} =\left\{
E_{j},E_{\overline{j}}\right\} $ of $T_{q}^{p}(M)$ as follows:%
\begin{equation*}
E_{j}=\text{ }^{H}X_{(j)}=\delta _{j}^{h}\partial
_{h}+(-\tsum\limits_{\lambda =1}^{p}\Gamma _{js}^{k_{\lambda
}}t_{h_{1}...h_{q}}^{k_{1}...s...k_{p}}+\tsum\limits_{\mu =1}^{q}\Gamma
_{jh_{\mu }}^{s}t_{h_{1}...s...h_{q}}^{k_{1}...k_{p}})\partial _{\overline{h}%
}~,
\end{equation*}%
\begin{equation*}
E_{\overline{j}}=^{V}A^{(\overline{j})}=\delta _{i_{1}}^{k_{1}}...\delta
_{i_{p}}^{k_{p}}\delta _{h_{1}}^{j_{1}}...\delta _{h_{q}}^{j_{q}}\partial _{%
\overline{h}}
\end{equation*}%
with respect to the natural frame $\left\{ \frac{\partial }{\partial x^{h}},%
\frac{\partial }{\partial x^{\overline{h}}}\right\} $ in $T_{q}^{p}(M),$
where $X_{(j)}=\frac{\partial }{\partial x^{j}}=\delta _{j}^{h}\frac{%
\partial }{\partial x^{h}}\in \Im _{0}^{1}(M)$ and $A^{(\overline{j}%
)}=\partial _{i_{1}}\otimes ...\otimes \partial _{i_{p}}\otimes
dx^{j_{1}}\otimes ...\otimes dx^{j_{q}}=\delta _{i_{1}}^{k_{1}}...\delta
_{i_{p}}^{k_{p}}\delta _{h_{1}}^{j_{1}}...\delta _{h_{q}}^{j_{q}}\partial
_{k_{1}}\otimes ...\otimes \partial _{k_{p}}\otimes dx^{h_{1}}\otimes
...\otimes dx^{h_{q}}\in \Im _{q}^{p}(M).$

With respect to the adapted frame $\left\{ E_{\alpha }\right\} ,$ the
vertical lift $^{V}A$ and the horizontal lift $^{H}X$ are as follows \cite%
{Salimov5}:%
\begin{equation*}
{}^{V}A=\left( 
\begin{array}{c}
{0} \\ 
{A_{j_{1}...j_{q}}^{i_{1}...i_{p}}}%
\end{array}%
\right) ,
\end{equation*}%
and

\begin{equation*}
^{H}X=\left( 
\begin{array}{c}
X{^{j}} \\ 
0%
\end{array}%
\right) .
\end{equation*}

The rescaled Sasaki type metric $^{S}g_{f}$ is defined on $T_{q}^{p}(M)$ by
the three equations%
\begin{equation*}
^{S}g_{f}(^{V}A,^{V}B)=^{V}(G(A,B)),
\end{equation*}%
\begin{equation*}
^{S}g_{f}(^{V}A,^{H}Y)=0,
\end{equation*}%
\begin{equation*}
^{S}g_{f}(^{H}X,^{H}Y)=^{V}(fg(X,Y)),
\end{equation*}%
for all $X,Y\in \Im _{0}^{1}(M)$ and $A,B\in \Im _{q}^{p}(M),$where 
\begin{equation*}
G(A,B)=g_{i_{1}t_{1}}...g_{i_{p}t_{p}}g^{j_{1}l_{1}}...g^{j_{q}l_{q}}A_{j_{1}...j_{q}}^{i_{1}...i_{p}}B_{l_{1}...l_{q}}^{t_{1}...t_{p}}.
\end{equation*}

The rescaled Sasaki type metric $^{S}g_{f}$ has components%
\begin{equation*}
(^{S}g_{f})_{\beta \gamma }=\left( 
\begin{array}{cc}
fg_{jl} & 0 \\ 
0 & g_{i_{1}t_{1}}...g_{i_{p}t_{p}}g^{j_{1}l_{1}}...g^{j_{q}l_{q}}%
\end{array}%
\right) ~,~x^{\overline{l}}=t_{l_{1}...l_{q}}^{t_{1}...t_{p}},
\end{equation*}%
with respect to the adapted frame, $g_{ij\text{ }}$and $g^{ij}$ being local
covariant and contravariant components $g$ of on $M$. By the Koszul formula
and standart calculations give the following.

\begin{proposition}
\label{Proposition3}The components of the Levi-Civita connection $\widehat{%
\nabla }$ of the tensor bundle $T_{q}^{p}(M)$ with the rescaled Sasaki type
metric\textit{\ ${}$}$^{S}g_{f}$ are given as follows:%
\begin{equation*}
\left\{ 
\begin{array}{l}
{}\widehat{{\Gamma }}{_{lj}^{\overline{r}}=}\frac{1}{2}\tsum\limits_{\mu
=1}^{q}{R_{ljh_{\mu }}^{\text{ \ \ \ }%
s}t_{h_{1}...s...h_{q}}^{k_{1}...k_{p}}-\frac{1}{2}}\tsum\limits_{\lambda
=1}^{p}{R_{ljs}^{\text{ \ \ \ }k_{\lambda
}}t_{h_{1}...h_{q}}^{k_{1}...s...k_{p}},} \\ 
\widehat{{\Gamma }}{_{l\overline{j}}^{\overline{r}}=}\tsum\limits_{\lambda
=1}^{p}{\Gamma _{ls}^{v_{\lambda }}\delta _{r_{1}}^{j_{1}}...\delta
_{r_{q}}^{j_{q}}\delta _{i_{1}}^{l_{1}}...\delta _{i_{\lambda
}}^{s}...\delta _{i_{p}}^{l_{p}}-}\tsum\limits_{\mu =1}^{q}{\Gamma _{lr_{\mu
}}^{s}\delta _{r_{1}}^{j_{1}}...\delta _{s}^{j_{\mu }}...\delta
_{r_{q}}^{j_{q}}\delta _{i_{1}}^{l_{1}}...\delta _{i_{p}}^{l_{p}},} \\ 
{}\widehat{{\Gamma }}{_{\overline{l}j}^{r}=\frac{1}{2f}%
g^{xr}g_{i_{1}t_{1}}...g_{i_{p}t_{p}}g^{j_{1}h_{1}}...g^{j_{q}h_{q}}(}%
\tsum\limits_{\mu =1}^{q}{R_{xjh_{\mu }}^{\text{ \ \ \ }%
s}t_{h_{1}...s...h_{q}}^{k_{1}...k_{p}}-}\tsum\limits_{\lambda =1}^{p}{%
R_{xjs}^{\text{ \ \ \ }k_{\lambda }}t_{h_{1}...h_{q}}^{k_{1}...s...k_{p}}),}
\\ 
\widehat{{\Gamma }}{_{l\overline{j}}^{r}=\frac{1}{2f}%
g^{xr}g_{t_{1}k_{1}}...g_{t_{p}k_{p}}g^{l_{1}h_{1}}...g^{l_{q}h_{q}}(}%
\tsum\limits_{\mu =1}^{q}{R_{xlh_{\mu }}^{\text{ \ \ \ }%
s}t_{h_{1}...s...h_{q}}^{k_{1}...k_{p}}-}\tsum\limits_{\lambda =1}^{p}{%
R_{xls}^{\text{ \ \ \ }k_{\lambda }}t_{h_{1}...h_{q}}^{k_{1}...s...k_{p}}),}
\\ 
{}\widehat{{\Gamma }}{_{lj}^{r}=\Gamma _{lj}^{r}+\frac{1}{2f}{}%
^{f}A_{lj}^{r},} \\ 
\widehat{{\Gamma }}{_{\overline{l}\overline{j}}^{\overline{r}}=0,{}}\widehat{%
{\Gamma }}{_{\overline{l}\overline{j}}^{r}=0,{}}\widehat{{\Gamma }}{_{%
\overline{l}j}^{\overline{r}}=0,}%
\end{array}%
\right.
\end{equation*}%
with respect to the adapted frame, where ${}^{f}A_{ji}^{h}$ is defined by $%
{}^{f}A_{ji}^{h}=(f_{j}\delta _{i}^{h}+f_{i}\delta _{j}^{h}-f_{.}^{h}g_{ji})$
and $f_{i}=\partial _{i}f.$
\end{proposition}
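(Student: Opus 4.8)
The plan is to run the same computation that established Theorem~\ref{Theorem1}, carrying along the extra multi-indices $i_{1}\dots i_{p}$ and $j_{1}\dots j_{q}$. Everything is read off from the Koszul formula
\begin{equation*}
2\,{}^{S}g_{f}(\widehat{\nabla}_{\widetilde{X}}\widetilde{Y},\widetilde{Z})=\widetilde{X}({}^{S}g_{f}(\widetilde{Y},\widetilde{Z}))+\widetilde{Y}({}^{S}g_{f}(\widetilde{Z},\widetilde{X}))-\widetilde{Z}({}^{S}g_{f}(\widetilde{X},\widetilde{Y}))-{}^{S}g_{f}(\widetilde{X},[\widetilde{Y},\widetilde{Z}])+{}^{S}g_{f}(\widetilde{Y},[\widetilde{Z},\widetilde{X}])+{}^{S}g_{f}(\widetilde{Z},[\widetilde{X},\widetilde{Y}]),
\end{equation*}
evaluated on the eight triples with $\widetilde{X},\widetilde{Y},\widetilde{Z}$ each equal to $E_{k}$ or $E_{\overline{k}}$. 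The first step is to produce the $(p,q)$-analogue of Lemma~\ref{Lemma1}: differentiating the local expressions of $E_{j}$ and $E_{\overline{j}}$ recorded just above and using $R_{ijl}^{\ \ \ s}=\partial_{i}\Gamma_{jl}^{s}-\partial_{j}\Gamma_{il}^{s}+\Gamma_{im}^{s}\Gamma_{jl}^{m}-\Gamma_{jm}^{s}\Gamma_{il}^{m}$, one obtains that $[E_{l},E_{j}]$ is purely vertical with $E_{\overline{r}}$-components $\sum_{\mu=1}^{q}R_{ljh_{\mu}}^{\ \ \ s}t_{h_{1}\dots s\dots h_{q}}^{k_{1}\dots k_{p}}-\sum_{\lambda=1}^{p}R_{ljs}^{\ \ \ k_{\lambda}}t_{h_{1}\dots h_{q}}^{k_{1}\dots s\dots k_{p}}$, that $[E_{l},E_{\overline{j}}]$ is vertical with exactly the Christoffel pattern appearing in $\widehat{\Gamma}_{l\overline{j}}^{\overline{r}}$, and that $[E_{\overline{l}},E_{\overline{j}}]=0$.

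Next I would substitute this into the Koszul formula. The decisive simplification, visible from the displayed matrix of $(^{S}g_{f})_{\beta\gamma}$, is that every component of $^{S}g_{f}$ is a function of the base point alone (it is assembled from $g$ and $f$); hence $E_{\overline{k}}$ annihilates every metric term, $E_{k}$ applied to $fg_{jl}$ returns $\partial_{k}(fg_{jl})$, and $E_{k}$ applied to the vertical block returns, via metric compatibility of $\nabla$, exactly the terms that recombine with the Christoffel parts of $E_{j}$ and $E_{\overline{j}}$. Splitting each $\widehat{\nabla}_{E_{\alpha}}E_{\beta}$ into its horizontal and vertical pieces by testing against $E_{k}$ and $E_{\overline{k}}$ (and using ${}^{S}g_{f}(E_{j},E_{\overline{k}})=0$), each of the eight cases reduces to one linear equation for one block of $\widehat{\Gamma}$. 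The horizontal--horizontal case gives $\Gamma_{lj}^{r}+\tfrac{1}{2f}{}^{f}A_{lj}^{r}$, precisely as in the tangent-bundle computation, the derivative $f_{i}=\partial_{i}f$ entering only here; the $[E_{l},E_{j}]$ term gives $\widehat{\Gamma}_{lj}^{\overline{r}}$; the brackets $[E_{l},E_{\overline{j}}]$ give $\widehat{\Gamma}_{l\overline{j}}^{\overline{r}}$; and the remaining mixed cases pick up the factor $1/f$ from the horizontal inverse block $(^{S}g_{f})^{kr}=\tfrac{1}{f}g^{kr}$ together with the string $g^{xr}g_{i_{1}t_{1}}\cdots g_{i_{p}t_{p}}g^{j_{1}h_{1}}\cdots g^{j_{q}h_{q}}$ needed to lower the indices of the vertical argument against the vertical block of $^{S}g_{f}$, possibly after a reorganization of the curvature terms by the first Bianchi identity exactly as in the proof of Theorem~\ref{Theorem1}. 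Torsion-freeness and $\widehat{\nabla}\,^{S}g_{f}=0$ require no separate check, being built into the Koszul formula.

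The main obstacle is purely combinatorial. One must carry the two independent families of sums $\sum_{\mu=1}^{q}$ over the lower slots and $\sum_{\lambda=1}^{p}$ over the upper slots through every term, tracking which Kronecker deltas pair which indices, and in the half-vertical symbols $\widehat{\Gamma}_{\overline{l}j}^{r}$ and $\widehat{\Gamma}_{l\overline{j}}^{r}$ one must contract the curvature against the full $(p+q+1)$-fold product of copies of $g$ and $g^{-1}$ arising from inverting the vertical block of $^{S}g_{f}$. Nothing conceptually new happens compared with Theorem~\ref{Theorem1}; the case $p=q=1$ already contains all the structure, so I would carry out the eight cases explicitly, reorganize the curvature terms with the first Bianchi identity where needed, and then simply record the answer, suppressing the routine index manipulations.
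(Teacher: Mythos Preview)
Your proposal is correct and follows exactly the approach the paper takes: the paper's entire proof of Proposition~\ref{Proposition3} is the single line ``By the Koszul formula and standard calculations give the following,'' which is precisely the program you have outlined (and in fact you supply considerably more detail than the paper does, including the $(p,q)$-analogue of Lemma~\ref{Lemma1} and the observation that the metric components depend only on the base point). Your plan mirrors the proof of Theorem~\ref{Theorem1} with the extra multi-indices carried along, which is exactly what is required.
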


An important geometric problem is to find the geodesics on the smooth
manifolds with respect to the Riemannian metrics (see \cite{Caddeo, Dusek,
Dusekk, Nagy, Salimov5, YanoIshihara:DiffGeo}. In \cite{YanoIshihara:DiffGeo}%
, K. Yano and S. Ishihara proved that the curves on the tangent bundles of
Riemannian manifolds are geodesics with respect to certain lifts of the
metric from the base manifold, if and only if the curves are obtained as
certain types of lifts of the geodesics from the base manifold. In this
section, we shall characterize the geodesics on the $(p,q)$-tensor bundle
with respect to the Levi-Civita connection of \textit{${}$}$^{S}g_{f}$ and a
metric connection of \textit{${}$}$^{S}g_{f}.$

Let $\widetilde{\gamma }=\widetilde{\gamma }(t)$ be a curve in $T_{q}^{p}(M)$
and suppose that $\widetilde{\gamma }$ is locally expressed by $%
x^{R}=x^{R}(t)$, i.e. $x^{r}=x^{r}(t)$, $x^{\overline{r}%
}=t_{r_{1}...r_{q}}^{v_{1}...v_{p}}(t)$ with respect to the natural frame $%
\left\{ \frac{\partial }{\partial x^{l}}\right\} =\left\{ \frac{\partial }{%
\partial x^{i}},\frac{\partial }{\partial x^{\overline{i}}}\right\} $, $t$
being a parameter an arc length of $\widetilde{\gamma }$. Then the curve $%
\gamma =\pi \circ \widetilde{\gamma }$ on $M$ is called the projection of
the curve $\widetilde{\gamma }$ and denoted by $\pi \widetilde{\gamma }$
which is expressed locally by $x^{r}=x^{r}(t)$.

A curve $\widetilde{\gamma }$ is, by definition, a geodesic in $T_{q}^{p}(M)$
with respect to the Levi-Civita connection $\widehat{\nabla }$ of $^{S}g_{f}$
if and only if it satisfies the differential equations%
\begin{equation}
\frac{d}{dt}(\frac{\omega ^{\varepsilon }}{dt})+\widehat{\Gamma }_{\gamma
\beta }^{\alpha }\frac{\omega ^{\gamma }}{dt}\frac{\omega ^{\beta }}{dt}=0
\label{B6.1}
\end{equation}%
with respect to the adapted frame, where 
\begin{equation*}
\frac{\omega ^{r}}{dt}=\frac{dx^{r}}{dt},~\ \ \frac{\omega ^{\overline{r}}}{%
dt}=\frac{\delta t_{r_{1}...r_{q}}^{v_{1}...v_{p}}}{dt},
\end{equation*}%
along a curve $\widetilde{\gamma }$.

By means of Proposition \ref{Proposition3}, (\ref{B6.1}) reduces to

\begin{eqnarray}
&&\frac{d}{dt}(\frac{\omega ^{r}}{dt})+({\Gamma _{lj}^{r}+\frac{1}{2f}{}%
^{f}A_{lj}^{r})}\frac{dx^{l}}{dt}\frac{dx^{j}}{dt}  \label{B6.2} \\
&&+{\frac{1}{2f}}%
g^{xr}g_{t_{1}k_{1}}...g_{t_{p}k_{p}}g^{l_{1}h_{1}}...g^{l_{q}h_{q}}(-\tsum%
\limits_{\lambda =1}^{p}R_{xjs}~^{k_{\lambda
}}t_{h_{1}...h_{q}}^{k_{1}...s...k_{p}}  \notag \\
&&+\tsum\limits_{\mu =1}^{q}R_{xjh_{\mu
}}~^{s}t_{h_{1}...s...h_{q}}^{k_{1}...k_{p}})\frac{\delta
_{l_{1}...l_{q}}^{t_{1}...t_{p}}}{dt}\frac{dx^{j}}{dt}  \notag \\
&&+\frac{1}{2f}%
g^{xr}g_{i_{1}k_{1}}...g_{i_{p}k_{p}}g^{j_{1}h_{1}}...g^{j_{q}h_{q}}(-\tsum%
\limits_{\lambda =1}^{p}R_{xls}~^{k_{\lambda
}}t_{h_{1}...h_{q}}^{k_{1}...s...k_{p}}  \notag \\
&&+\tsum\limits_{\mu =1}^{q}R_{xlh_{\mu
}}~^{s}t_{h_{1}...s...h_{q}}^{k_{1}...k_{p}})\frac{dx^{l}}{dt}\frac{\delta
t_{j_{1}...j_{q}}^{l_{1}...l_{p}}}{dt}  \notag \\
&=&0,  \notag
\end{eqnarray}

\begin{eqnarray}
&&\frac{d}{dt}(\frac{\delta t_{r_{1}...r_{q}}^{v_{1}...v_{p}}}{dt})+\frac{1}{%
2}(\tsum\limits_{\mu =1}^{q}R_{ljh_{\mu
}}~^{s}t_{h_{1}...s...h_{q}}^{k_{1}...k_{p}}-\tsum\limits_{\lambda
=1}^{p}R_{ljs}~^{k_{\lambda }}t_{h_{1}...h_{q}}^{k_{1}...s...k_{p}})\frac{%
dx^{l}}{dt}\frac{dx^{j}}{dt}  \label{B6.3} \\
&&+(\tsum\limits_{\lambda =1}^{p}\Gamma _{ls}^{v_{\lambda }}\delta
_{r_{1}}^{j_{1}}...\delta _{r_{q}}^{j_{q}}\delta _{i_{1}}^{l_{1}}...\delta
_{i_{\lambda }}^{s}...\delta _{i_{p}}^{l_{p}}-\tsum\limits_{\mu
=1}^{q}\Gamma _{lr_{\mu }}^{s}\delta _{r_{1}}^{j_{1}}...\delta _{s}^{j_{\mu
}}...\delta _{r_{q}}^{j_{q}}\delta _{i_{1}}^{l_{1}}...\delta
_{i_{p}}^{l_{p}})\frac{dx^{l}}{dt}\frac{\delta
t_{j_{1}...j_{q}}^{i_{1}...i_{p}}}{dt}  \notag \\
&=&0.  \notag
\end{eqnarray}

\bigskip We now transform (\ref{B6.2}) as follows:

\begin{eqnarray}
&&\frac{\delta ^{2}x^{r}}{dt^{2}}{+\frac{1}{2f}{}^{f}A_{lj}^{r}}\frac{dx^{l}%
}{dt}\frac{dx^{j}}{dt}+\frac{1}{f}%
g_{t_{1}k_{1}}...g_{t_{p}k_{p}}g^{l_{1}h_{1}}...g^{l_{q}h_{q}}(\tsum%
\limits_{\mu =1}^{q}R_{.jh_{\mu }}^{r\text{ \ \ \ }%
s}~t_{h_{1}...s...h_{q}}^{k_{1}...k_{p}}  \label{B6.4} \\
&&-\tsum\limits_{\lambda =1}^{p}R_{.js}^{r\text{ \ \ \ }k_{\lambda
}}~t_{h_{1}...h_{q}}^{k_{1}...s...k_{p}})\frac{\delta
_{l_{1}...l_{q}}^{t_{1}...t_{p}}}{dt}\frac{dx^{j}}{dt}  \notag \\
&=&0.  \notag
\end{eqnarray}

\bigskip

Using the identity $\left( -\sum\limits_{\lambda =1}^{p}R_{ljs}^{\text{ \ \ }%
k_{\lambda }}t_{h_{1}...h_{q}}^{k_{1}...s...k_{p}}+\sum\limits_{\mu
=1}^{q}R_{ljh_{\mu }}^{\text{ \ \ \ }s}t_{h_{1}...s...h_{q}}^{k_{1}...k_{p}}%
\right) \frac{dx^{l}}{dt}\frac{dx^{j}}{dt}=0$, transform of (\ref{B6.3}) as
follow:%
\begin{equation*}
\frac{\delta ^{2}t_{r_{1}...r_{q}}^{v_{1}...v_{p}}}{dt^{2}}=0.
\end{equation*}

Hence, we have the theorem below.

\begin{theorem}
Let $\widetilde{\gamma }$ be a geodesic on $T_{q}^{p}(M)$ of the Levi-Civita
connection $\widehat{\nabla }$ of $^{S}g_{f}$. Then the tensor field $%
t_{r_{1}...r_{q}}^{v_{1}...v_{p}}(t)$ defined along $\gamma $ satisfies the
differential equations (\ref{B6.4}) and has vanishing second covariant
derivative.
\end{theorem}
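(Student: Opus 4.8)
The starting point is the geodesic system (\ref{B6.1}) expressed in the adapted frame $\{E_{\alpha}\}$, into which I substitute the connection coefficients $\widehat{\Gamma}_{\gamma\beta}^{\alpha}$ of $\widehat{\nabla}$ listed in Proposition \ref{Proposition3}. Separating the components carrying $\varepsilon = r$ from those carrying $\varepsilon = \overline{r}$ produces exactly the two equations (\ref{B6.2}) and (\ref{B6.3}); this step is purely mechanical once one keeps track of which $\widehat{\Gamma}$'s are non-zero. The remaining work is to recognise second covariant derivatives inside (\ref{B6.2}) and (\ref{B6.3}) and to exploit the skew-symmetry of the curvature tensor in its first pair of indices.

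For the horizontal equation (\ref{B6.2}) I would first combine $\frac{d}{dt}\!\left(\frac{dx^{r}}{dt}\right)$ with the term $\Gamma_{lj}^{r}\frac{dx^{l}}{dt}\frac{dx^{j}}{dt}$ into the second covariant derivative $\frac{\delta^{2}x^{r}}{dt^{2}}$ along $\gamma = \pi\widetilde{\gamma}$. The two curvature contributions in (\ref{B6.2}) --- one paired with $\frac{\delta t_{l_{1}\dots l_{q}}^{t_{1}\dots t_{p}}}{dt}\frac{dx^{j}}{dt}$, the other with $\frac{dx^{l}}{dt}\frac{\delta t_{j_{1}\dots j_{q}}^{l_{1}\dots l_{p}}}{dt}$ --- coincide after relabelling the dummy summation indices, so they add up to twice a single expression; with the prefactor $\frac{1}{2f}$ this gives the coefficient $\frac{1}{f}$ appearing in (\ref{B6.4}). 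Raising the first index with $g^{xr}$, i.e. replacing $R_{xjh_{\mu}}{}^{s}$ by $g^{xr}R_{xjh_{\mu}}{}^{s}$ and likewise for the $R_{xjs}{}^{k_{\lambda}}$ term, puts the equation into the exact shape (\ref{B6.4}).

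For the vertical equation (\ref{B6.3}) the decisive observation is that its leading term $\frac{1}{2}\bigl(\sum_{\mu}R_{ljh_{\mu}}{}^{s}t_{h_{1}\dots s\dots h_{q}}^{k_{1}\dots k_{p}} - \sum_{\lambda}R_{ljs}{}^{k_{\lambda}}t_{h_{1}\dots h_{q}}^{k_{1}\dots s\dots k_{p}}\bigr)\frac{dx^{l}}{dt}\frac{dx^{j}}{dt}$ vanishes identically, since $R$ is skew-symmetric in the lower indices $l$ and $j$ whereas $\frac{dx^{l}}{dt}\frac{dx^{j}}{dt}$ is symmetric in $l,j$. What remains is $\frac{d}{dt}\!\left(\frac{\delta t_{r_{1}\dots r_{q}}^{v_{1}\dots v_{p}}}{dt}\right)$ together with the correction term built from $\widehat{\Gamma}_{l\overline{j}}^{\overline{r}}$, and by the very definition of the covariant derivative along $\gamma$ this is precisely $\frac{\delta^{2}t_{r_{1}\dots r_{q}}^{v_{1}\dots v_{p}}}{dt^{2}}$; hence it equals zero. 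Combined with (\ref{B6.4}) this is exactly the assertion of the theorem.

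I do not expect a genuine obstacle here: the only mildly delicate bookkeeping is verifying that the two curvature terms of the horizontal equation really agree after reindexing and that the raisings and lowerings with $g$ are carried out consistently. The conceptual core is the single antisymmetry remark that annihilates the quadratic curvature term in (\ref{B6.3}); everything else is routine rewriting already anticipated in the computations preceding the statement.
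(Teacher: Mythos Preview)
Your proposal is correct and follows essentially the same route as the paper: substitute the Christoffel symbols from Proposition~\ref{Proposition3} into the geodesic system (\ref{B6.1}) to obtain (\ref{B6.2}) and (\ref{B6.3}), rewrite the horizontal equation as (\ref{B6.4}) by recognising $\frac{\delta^{2}x^{r}}{dt^{2}}$ and merging the two identical curvature contributions, and kill the curvature term in (\ref{B6.3}) via the antisymmetry of $R_{lj\cdot}{}^{\cdot}$ against the symmetric product $\frac{dx^{l}}{dt}\frac{dx^{j}}{dt}$ so that only $\frac{\delta^{2}t_{r_{1}\dots r_{q}}^{v_{1}\dots v_{p}}}{dt^{2}}=0$ remains. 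The paper states the antisymmetry identity explicitly just before the theorem, exactly as you use it.
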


Next, let $\gamma $ be a curve on $M$ expressed locally by $x^{h}=x^{h}(t)$
and $S_{j_{1}...j_{q}}^{i_{1}...i_{p}}(t)$ be a $\left( p,q\right) $ tensor
field along $\gamma $. Then, on the tensor bundle $T_{q}^{p}(M)$ over the
Riemannian manifold $M$, we define a curve $^{H}\gamma $ by%
\begin{equation*}
\left\{ 
\begin{array}{c}
x^{h}=x^{h}(t), \\ 
x^{\overline{h}}=S_{h_{1}...h_{q}}^{k_{1}...k_{p}}(t).%
\end{array}%
\right.
\end{equation*}%
If the curve $^{H}\gamma $ satisfies at all the points the relation%
\begin{equation}
\frac{\delta S_{h_{1}...h_{q}}^{k_{1}...k_{p}}}{dt}=0,  \label{B6.5}
\end{equation}%
i.e. $S_{j_{1}...j_{q}}^{i_{1}...i_{p}}(t)$ is a parallel tensor field along 
$\gamma $, then the curve $^{H}\gamma $ is said to be a horizontal lift of $%
\gamma $. From (\ref{B6.4}) and (\ref{B6.5}), we obtain%
\begin{equation*}
\frac{\delta ^{2}x^{r}}{dt^{2}}{+\frac{1}{2f}{}^{f}A_{lj}^{r}}\frac{dx^{l}}{%
dt}\frac{dx^{j}}{dt}=0.
\end{equation*}%
If we take ${}$%
\begin{equation}
{^{f}A_{lj}^{r}=(\partial }_{l}{f\delta _{j}^{r}+\partial }_{j}{f\delta
_{l}^{r}-g}^{rm}{\partial }_{m}f{g_{lj})=0}  \label{B6.6}
\end{equation}%
Contracting $l$ and $r$ in (\ref{B6.6}) it follows that ${\partial }_{j}{f=0.%
}$ Since this is true for any $j,$ we can say $f=C(const.).$ Thus we have
the following theorem.

\begin{theorem}
The horizontal lift of a geodesic on $M$ is always geodesic on $T_{q}^{p}(M)$
with respect to the Levi-Civita connection $\widehat{\nabla }$ of $^{S}g_{f}$
if and only if $f=C(const.).$
\end{theorem}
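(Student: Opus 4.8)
The plan is to specialize the geodesic equations on $T_{q}^{p}(M)$ derived just above to a horizontal lift and read off the condition on $f$. Recall from the passage leading to (\ref{B6.4}) that a curve $\widetilde{\gamma}$ with projection $\gamma$ and fibre component $t_{r_{1}\ldots r_{q}}^{v_{1}\ldots v_{p}}(t)$ is a geodesic of $\widehat{\nabla}$ if and only if (\ref{B6.4}) holds together with $\frac{\delta ^{2}t_{r_{1}\ldots r_{q}}^{v_{1}\ldots v_{p}}}{dt^{2}}=0$ (the transformed form of (\ref{B6.3})). First I would take $\widetilde{\gamma}={}^{H}\gamma$, the horizontal lift of a geodesic $\gamma$ of $(M,g)$: then $\frac{\delta ^{2}x^{r}}{dt^{2}}=0$ since $\gamma$ is a geodesic, and $S_{h_{1}\ldots h_{q}}^{k_{1}\ldots k_{p}}$ is parallel along $\gamma$, i.e. $\frac{\delta S_{h_{1}\ldots h_{q}}^{k_{1}\ldots k_{p}}}{dt}=0$ by (\ref{B6.5}). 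The fibre condition $\frac{\delta ^{2}S}{dt^{2}}=0$ is then automatic, and every term in (\ref{B6.4}) carrying a factor $\frac{\delta S}{dt}$ drops out, so (\ref{B6.4}) collapses to
\[
\frac{1}{2f}\,{}^{f}A_{lj}^{r}\,\frac{dx^{l}}{dt}\frac{dx^{j}}{dt}=0 .
\]

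For the ``only if'' direction I would argue pointwise. If ${}^{H}\gamma$ is a geodesic for \emph{every} geodesic $\gamma$ of $M$ (take, say, the zero tensor field, which is parallel, as fibre component), then the displayed identity holds along every geodesic; since at each point $p\in M$ and each $v\in T_{p}M$ there is a geodesic with initial velocity $v$, we get ${}^{f}A_{lj}^{r}(p)\,v^{l}v^{j}=0$ for all $v$, and because ${}^{f}A_{lj}^{r}$ is symmetric in $l,j$ this forces ${}^{f}A_{lj}^{r}=0$ identically on $M$. Contracting $l$ with $r$ in ${}^{f}A_{lj}^{r}=f_{l}\delta _{j}^{r}+f_{j}\delta _{l}^{r}-f_{.}^{r}g_{lj}$ yields $nf_{j}+f_{j}-f_{j}=nf_{j}=0$, hence $f_{j}=\partial _{j}f=0$ for all $j$, so the positive function $f$ is a constant. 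Conversely, if $f=C(\mathrm{const.})$ then ${}^{f}A_{lj}^{r}\equiv 0$, so (\ref{B6.4}) reduces to $\frac{\delta ^{2}x^{r}}{dt^{2}}=0$, which holds since $\gamma$ is a geodesic, while the fibre equation $\frac{\delta ^{2}S}{dt^{2}}=0$ holds because $S$ is parallel; thus ${}^{H}\gamma$ is a geodesic of $\widehat{\nabla}$.

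There is essentially no serious obstacle; the only point requiring a little care is the passage from ``the quadratic form ${}^{f}A_{lj}^{r}v^{l}v^{j}$ vanishes along every geodesic'' to ``${}^{f}A_{lj}^{r}=0$ as a tensor'', which uses the existence of a geodesic in every direction at every point together with the symmetry of ${}^{f}A$; after that the conclusion is the one-line contraction above. It may also be worth remarking invariantly that, in the notation of Proposition~\ref{Proposition1}, ${}^{f}A(X,Y)=X(f)Y+Y(f)X-g(X,Y)\circ (df)^{\ast}$ vanishes identically if and only if $df=0$, which is the coordinate-free form of the same computation.
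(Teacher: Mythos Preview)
Your proof is correct and follows essentially the same route as the paper: specialize the geodesic equations (\ref{B6.4}) to a horizontal lift using (\ref{B6.5}), reduce to $\frac{1}{2f}{}^{f}A_{lj}^{r}\frac{dx^{l}}{dt}\frac{dx^{j}}{dt}=0$, and then contract ${}^{f}A_{lj}^{r}=0$ on $l$ and $r$ to conclude $\partial_{j}f=0$. Your write-up is in fact more complete than the paper's: you explicitly justify the passage from the vanishing of ${}^{f}A_{lj}^{r}v^{l}v^{j}$ along all geodesics to ${}^{f}A_{lj}^{r}=0$ as a tensor (via existence of geodesics in every direction and the symmetry of ${}^{f}A$), and you spell out the converse direction, both of which the paper leaves implicit.
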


By the same way in the section 5, by using the Levi-Civita connection $%
\widehat{\nabla }$ of $^{S}g_{f}$ in $T_{q}^{p}(M)$, we introduce a metric
connection $^{(M)}\widehat{\nabla }$ in $T_{q}^{p}(M)$ whose torsion tensor
has components%
\begin{equation*}
\left\{ 
\begin{array}{c}
^{^{(M)}\widehat{\nabla }}T_{lj}^{\overline{r}}={\frac{1}{2}}%
\tsum\limits_{\lambda =1}^{p}{R_{ljs}^{\text{ \ \ }k_{\lambda
}}t_{h_{1}...h_{q}}^{k_{1}...s...k_{p}}-\frac{1}{2}}\tsum\limits_{\mu =1}^{q}%
{R_{ljh_{\mu }}^{\text{ \ \ }s}t_{h_{1}...s...h_{q}}^{k_{1}...k_{p}}}, \\ 
otherwise=0%
\end{array}%
\right.
\end{equation*}%
with respect to $^{S}g_{f}.$

\begin{proposition}
Let $(M,g)$ be a Riemannian manifold and $T_{q}^{p}(M)$ be its $(p,q)-$%
tensor bundle with the rescaled Sasaki type metric ${}^{S}g_{f}.$ The
components of the metric connection $^{(M)}\widehat{\nabla }$ with respect
to ${}^{S}g_{f}$ is as follows:%
\begin{equation*}
\left\{ 
\begin{array}{l}
^{(M)}{}\widehat{{\Gamma }}{_{lj}^{\overline{r}}=0,}{}^{(M)}\widehat{{\Gamma 
}}{_{\overline{l}j}^{r}=0,}^{(M)}\widehat{{\Gamma }}{_{l\overline{j}}^{r}=0,}%
^{(M)}\widehat{{\Gamma }}{_{\overline{l}\overline{j}}^{\overline{r}}=0,{}}%
^{(M)}\widehat{{\Gamma }}{_{\overline{l}\overline{j}}^{r}=0,{}}^{(M)}%
\widehat{{\Gamma }}{_{\overline{l}j}^{\overline{r}}=0,} \\ 
^{(M)}\widehat{{\Gamma }}{_{l\overline{j}}^{\overline{r}}=}%
\tsum\limits_{\lambda =1}^{p}{\Gamma _{ls}^{v_{\lambda }}\delta
_{r_{1}}^{j_{1}}...\delta _{r_{q}}^{j_{q}}\delta _{i_{1}}^{l_{1}}...\delta
_{i_{\lambda }}^{s}...\delta _{i_{p}}^{l_{p}}-}\tsum\limits_{\mu =1}^{q}{%
\Gamma _{lr_{\mu }}^{s}\delta _{r_{1}}^{j_{1}}...\delta _{s}^{j_{\mu
}}...\delta _{r_{q}}^{j_{q}}\delta _{i_{1}}^{l_{1}}...\delta
_{i_{p}}^{l_{p}},} \\ 
^{(M)}{}\widehat{{\Gamma }}{_{lj}^{r}=\Gamma _{lj}^{r}+\frac{1}{2f}{}%
^{f}A_{lj}^{r},}%
\end{array}%
\right.
\end{equation*}%
with respect to the adapted frame.
\end{proposition}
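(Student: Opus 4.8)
The plan is to carry out the Hayden construction of Section~5 one more time, now keeping track of the multi-index $\overline{j}=(i_{1}\dots i_{p},\,j_{1}\dots j_{q})$ of $T_{q}^{p}(M)$. Formulas (\ref{B5.2})--(\ref{B5.3}) involve only the metric and the chosen torsion, so they hold verbatim on $T_{q}^{p}(M)$: the metric connection has Christoffel symbols ${}^{(M)}\widehat{\Gamma}_{\alpha\beta}^{\gamma}=\widehat{\Gamma}_{\alpha\beta}^{\gamma}+\widehat{U}_{\alpha\beta}^{\gamma}$, where $\widehat{\Gamma}$ is the Levi-Civita connection of $^{S}g_{f}$ computed in Proposition~\ref{Proposition3} and $\widehat{U}_{\alpha\beta\gamma}=\tfrac{1}{2}\big({}^{^{(M)}\widehat{\nabla }}T_{\alpha\beta\gamma}+{}^{^{(M)}\widehat{\nabla }}T_{\gamma\alpha\beta}+{}^{^{(M)}\widehat{\nabla }}T_{\gamma\beta\alpha}\big)$, with indices lowered and raised using the components of $^{S}g_{f}$ displayed just before Proposition~\ref{Proposition3}.

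First I would lower the contravariant index of the prescribed torsion. Since $(^{S}g_{f})_{\beta\gamma}$ is block diagonal, the fully covariant torsion ${}^{^{(M)}\widehat{\nabla }}T_{\alpha\beta\gamma}$ is supported exactly on the block in which $\alpha$ and $\beta$ are horizontal and $\gamma$ is vertical, and there it is skew-symmetric in $\alpha,\beta$ (this skew-symmetry is automatic from $R_{ljh}^{s}=-R_{jlh}^{s}$). Hence, for a fixed assignment of ``horizontal/vertical'' types to $(\alpha,\beta,\gamma)$, at most one of the three terms of $\widehat{U}_{\alpha\beta\gamma}$ can be nonzero, and the only types that occur are (horizontal, horizontal, vertical), (horizontal, vertical, horizontal) and (vertical, horizontal, horizontal). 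Raising the last index with the block-diagonal inverse metric then leaves precisely three possibly nonzero blocks $\widehat{U}_{lj}^{\overline{r}}$, $\widehat{U}_{l\overline{j}}^{r}$ and $\widehat{U}_{\overline{l}j}^{r}$; every other $\widehat{U}_{\alpha\beta}^{\gamma}$ vanishes. In this step the factor $\tfrac{1}{f}$ in the mixed blocks comes from the horizontal block $\tfrac{1}{f}g^{jl}$ of $(^{S}g_{f})^{\beta\gamma}$, while the contractions against the tensor-type index split, exactly as in Proposition~\ref{Proposition3}, into a sum $\sum_{\lambda=1}^{p}$ over the $p$ contravariant slots and a sum $\sum_{\mu=1}^{q}$ over the $q$ covariant slots.

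The last step is to add $\widehat{U}_{\alpha\beta}^{\gamma}$ to the Levi-Civita symbols of Proposition~\ref{Proposition3} and read off the cancellations. The three curvature-carrying blocks of $\widehat{\Gamma}$, namely $\widehat{\Gamma}_{lj}^{\overline{r}}$, $\widehat{\Gamma}_{l\overline{j}}^{r}$ and $\widehat{\Gamma}_{\overline{l}j}^{r}$, are matched term by term by $-\widehat{U}_{lj}^{\overline{r}}$, $-\widehat{U}_{l\overline{j}}^{r}$ and $-\widehat{U}_{\overline{l}j}^{r}$ respectively; this is the multi-index analogue of the cancellations observed in Section~5 (where the same $\widehat{U}$ precisely removes the $E_{\overline{r}}$-components and the mixed $E_{r}$-components of $\widetilde{\nabla}$), and it rests on the symmetries of the Riemannian curvature of $g$, which turn the contracted torsion expression into minus the corresponding Levi-Civita block. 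The blocks of $\widehat{\Gamma}$ that carry no curvature, $\widehat{\Gamma}_{lj}^{r}=\Gamma_{lj}^{r}+\tfrac{1}{2f}{}^{f}A_{lj}^{r}$ and $\widehat{\Gamma}_{l\overline{j}}^{\overline{r}}$ (the base Christoffel symbols acting on the $p+q$ tensor slots), receive no $\widehat{U}$ contribution and survive unchanged, while the blocks that were already zero in $\widehat{\Gamma}$ stay zero. This is exactly the list of components in the statement.

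The only genuine work is the index bookkeeping in the middle step: writing the lowered torsion with the purity metric $G$, raising the free index again, and verifying that the resulting $p$-term-plus-$q$-term sums coincide, up to sign, with the mixed Levi-Civita blocks of Proposition~\ref{Proposition3}. Everything else is formal, and for $p=q=1$ the whole computation reduces to the one already carried out in Section~5.
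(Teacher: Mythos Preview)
Your proposal is correct and follows exactly the route the paper intends: the paper itself offers no separate argument for this proposition, stating only ``By the same way in the section~5'' before writing down the torsion and the result, so the Hayden construction you outline---lower the single nonzero torsion block with the block-diagonal $^{S}g_{f}$, read off the three surviving $\widehat{U}$-blocks, and cancel them against the curvature-carrying Levi--Civita symbols of Proposition~\ref{Proposition3}---is precisely the computation the paper is pointing to, only spelled out in more detail than the paper bothers with.
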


By using the components $^{(M)}{}\widehat{{\Gamma }}_{\gamma \beta }^{\alpha
}$ of the metric connection $^{(M)}\widehat{\nabla }$ in (\ref{B6.1}), we get%
\begin{equation}
\left\{ 
\begin{array}{c}
\frac{d^{2}x^{r}}{dt^{2}}+({\Gamma _{lj}^{r}+\frac{1}{2f}{}^{f}A_{lj}^{r})}%
\frac{dx^{l}}{dt}\frac{dx^{j}}{dt}=0, \\ 
\frac{d}{dt}(\frac{\delta t_{r_{1}...r_{q}}^{v_{1}...v_{p}}}{dt}%
)+(\sum\limits_{\lambda =1}^{p}\Gamma _{ls}^{v_{\lambda }}\delta
_{r_{1}}^{j_{1}}...\delta _{r_{q}}^{j_{q}}\delta _{i_{1}}^{l_{1}}...\delta
_{i_{\lambda }}^{s}...\delta _{i_{p}}^{l_{p}}- \\ 
-\sum\limits_{\mu =1}^{q}\Gamma _{lr_{\mu }}^{s}\delta
_{r_{1}}^{j_{1}}...\delta _{s}^{j_{\mu }}...\delta _{r_{q}}^{j_{q}}\delta
_{i_{1}}^{l_{1}}...\delta _{i_{p}}^{l_{p}})\frac{dx^{l}}{dt}\frac{\delta
t_{j_{1}...j_{q}}^{i_{1}...i_{p}}}{dt}=0,%
\end{array}%
\right.  \label{B6.7}
\end{equation}%
where the second equation of (\ref{B6.7}) reduces to%
\begin{equation*}
\frac{\delta ^{2}t_{r_{1}...r_{q}}^{v_{1}...v_{p}}}{dt^{2}}=0.
\end{equation*}%
Thus we have the last result.

\begin{theorem}
Let $\widetilde{\gamma }$ be a geodesic on $T_{q}^{p}(M)$ with respect to
the metric connection $^{(M)}\widehat{\nabla }$ of $^{S}g_{f}$. Then the
projection $\gamma $ of $\widetilde{\gamma }$ is a geodesic with respect to
the Levi-Civita connection $\nabla $ on $M$ and the tensor field $%
t_{r_{1}...r_{q}}^{v_{1}...v_{p}}(t)$ defined along $\gamma $ has vanishing
second covariant derivative if and only if $f=C(const.).$\bigskip
\end{theorem}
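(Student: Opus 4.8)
The plan is to copy the pattern used for the Levi-Civita connection $\widehat{\nabla}$: substitute the Christoffel symbols of ${}^{(M)}\widehat{\nabla}$ obtained in the preceding proposition into the geodesic system (\ref{B6.1}) and read off the resulting equations. Writing $\widetilde{\gamma}$ locally as $x^{r}=x^{r}(t)$, $x^{\overline{r}}=t_{r_{1}\dots r_{q}}^{v_{1}\dots v_{p}}(t)$ and recalling $\frac{\omega ^{r}}{dt}=\frac{dx^{r}}{dt}$, $\frac{\omega ^{\overline{r}}}{dt}=\frac{\delta t_{r_{1}\dots r_{q}}^{v_{1}\dots v_{p}}}{dt}$, the only non-zero components ${}^{(M)}\widehat{\Gamma}_{lj}^{r}=\Gamma _{lj}^{r}+\frac{1}{2f}{}^{f}A_{lj}^{r}$ and ${}^{(M)}\widehat{\Gamma}_{l\overline{j}}^{\overline{r}}$ feed into (\ref{B6.1}) to give exactly the system (\ref{B6.7}). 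Note that, in contrast with the Levi-Civita case (\ref{B6.4}), the horizontal equation now carries no $\frac{\delta t}{dt}$ cross term, so it reduces at once to $\frac{\delta ^{2}x^{r}}{dt^{2}}+\frac{1}{2f}{}^{f}A_{lj}^{r}\frac{dx^{l}}{dt}\frac{dx^{j}}{dt}=0$, while the vertical equation reduces to $\frac{\delta ^{2}t_{r_{1}\dots r_{q}}^{v_{1}\dots v_{p}}}{dt^{2}}=0$. Thus the statement ``$t_{r_{1}\dots r_{q}}^{v_{1}\dots v_{p}}(t)$ has vanishing second covariant derivative along $\gamma $'' holds automatically for every $f$, and the whole theorem reduces to the equivalence between ``$\gamma =\pi \widetilde{\gamma}$ is a geodesic of $\nabla $'' and ``$f=C$''.

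For the direction $f=C(const.)\Rightarrow$ the conclusion: then $f_{i}=\partial _{i}f=0$, hence ${}^{f}A_{lj}^{r}=f_{l}\delta _{j}^{r}+f_{j}\delta _{l}^{r}-f_{.}^{r}g_{lj}=0$, so the horizontal equation becomes $\frac{\delta ^{2}x^{r}}{dt^{2}}=0$; together with $\frac{\delta ^{2}t}{dt^{2}}=0$ this is precisely the asserted conclusion. For the converse, assume the conclusion holds for every geodesic $\widetilde{\gamma}$ of ${}^{(M)}\widehat{\nabla}$. Since the geodesic equations of a linear connection admit a solution through any point of $T_{q}^{p}(M)$ with any prescribed initial velocity, $\frac{dx^{l}}{dt}$ can be made an arbitrary tangent vector $v$ at an arbitrary point $x\in M$; hence $\frac{\delta ^{2}x^{r}}{dt^{2}}=0$ forces ${}^{f}A_{lj}^{r}v^{l}v^{j}=0$ for all such $v$ and $x$. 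As ${}^{f}A_{lj}^{r}$ is symmetric in $l,j$, polarization gives ${}^{f}A_{lj}^{r}=0$ identically, and contracting $l$ with $r$ (as in the passage from (\ref{B6.6})) yields ${}^{f}A_{lj}^{l}=nf_{j}=0$, so $f_{j}=\partial _{j}f=0$ for all $j$, i.e. $f=C(const.)$.

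All of this is essentially routine: the substitution into (\ref{B6.1}) and the reduction of the two equations merely repeat the computations already carried out for $\widehat{\nabla}$ in this section, and the ``if'' part is immediate once ${}^{f}A=0$. The only step requiring a little attention is the ``only if'' part, where one must invoke the existence of geodesics with arbitrary initial data in order to upgrade the identity ${}^{f}A_{lj}^{r}v^{l}v^{j}=0$ from holding along a single curve to holding pointwise for all $v$, and then use symmetry together with the trace to conclude that $f$ is constant; this contraction step, mirroring the treatment of (\ref{B6.6}), is the only genuinely non-mechanical ingredient.
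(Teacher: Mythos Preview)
Your proof is correct and follows the same route as the paper: substitute the components of ${}^{(M)}\widehat{\nabla}$ from the preceding proposition into the geodesic system (\ref{B6.1}), obtain exactly the system (\ref{B6.7}), and observe that the vertical equation reduces to $\frac{\delta^{2}t}{dt^{2}}=0$ while the horizontal one becomes $\frac{\delta^{2}x^{r}}{dt^{2}}+\frac{1}{2f}{}^{f}A_{lj}^{r}\frac{dx^{l}}{dt}\frac{dx^{j}}{dt}=0$. The paper stops at this derivation and simply states the theorem; your treatment of the ``only if'' direction (invoking arbitrary initial data, polarizing, and contracting ${}^{f}A_{lj}^{l}=nf_{j}$) makes explicit what the paper leaves to the reader, mirroring the argument already used around (\ref{B6.6}) for the previous theorem.
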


\end{document}